\numberwithin{equation}{section}
\title{Patterson--Sullivan theory for Anosov subgroups}
\author{Subhadip Dey \and Michael Kapovich}
\date{March 20, 2022}
\renewcommand*{\backref}[1]{}
\renewcommand*{\backrefalt}[4]{%
    \ifcase #1 (Not cited.)%
    \or        (Cited on page~#2.)%
    \else      (Cited on pages~#2.)%
    \fi}
\theoremstyle{plain}
\newtheorem{thm}{Theorem}[section]
\newtheorem*{thmn}{Theorem}
\newtheorem{lem}[thm]{Lemma}
\newtheorem{sublem}[thm]{Sublemma}
\newtheorem{prop}[thm]{Proposition}
\newtheorem{cor}[thm]{Corollary}
\newtheorem*{claim}{Claim}
\newtheorem{ques}[thm]{Question}
\newenvironment{customthm}[1]
  {\innercustomthm}
  {\endinnercustomthm}
\newenvironment{customclaim}[1]
  {\innercustomclaim}
  {\endinnercustomclaim}
\theoremstyle{definition}
\newtheorem{asm}{Assumption}
\newtheorem{defn}[thm]{Definition}
\newtheorem{exmp}[thm]{Example}
\newtheorem{rem}[thm]{Remark}
\newtheorem*{rem*}{Remark}
\newtheorem*{notation}{Notation}
\def\a{\mathfrak{a}}
\def\acts{\curvearrowright}
\def\con{{\mathrm{con}}}
\def\const{\mathrm{const}}
\def\d{\delta}
\def\D{\Delta}
\def\df{d_{\tb}}
\def\dF{\d_{\tb}}
\def\dH{d_\mathrm{H}}
\def\dhor{\mathcal{B}^{\tb}}
\def\idhor{\mathcal{B}^{\iota\tb}}
\def\dr{d_\mathrm{Riem}}
\def\dR{\d_\mathrm{Riem}}
\def\e{\epsilon}
\def\Flag{\mathrm{Flag}}
\def\fmod{F_{\mathrm{mod}}}
\def\Ft{\mathrm{Flag}({\tau_\mathrm{mod}})}
\def\g{\gamma}
\def\G{\Gamma}
\def\gr{\mathrm{Gr}}
\def\H{\mathcal{H}}
\def\i{\iota}
\def\L{\Lambda}
\def\Lt{\Lambda_{\tau_\mathrm{mod}}}
\def\LT{\Lambda_{\tau_\mathrm{mod}}(\Gamma)}
\def\Ltc{\Lambda_{\tau_\mathrm{mod}}^\mathrm{con}(\Gamma)}
\def\N{\mathbb{N}}
\def\R{\mathbb{R}}
\def\ra{\rightarrow}
\def\smod{{\sigma_\mathrm{mod}}}
\def\t{\tau}
\def\tb{{\bar\theta}}
\def\T{\Theta}
\def\tmod{{\tau_\mathrm{mod}}}
\def\vb{\partial_\infty}
\def\ve{\varepsilon}
\def\Xt{\bar{X}^\tmod}
\newcommand{\be}{\begin{equation}}
\newcommand{\dg}[1]{D^{\tb,#1}}
\newcommand{\ee}{\end{equation}}
\newcommand{\fg}[1]{\widehat{#1}}
\newcommand{\LR}[1]{\left( {#1} \right)}
\newcommand{\LRB}[1]{\left\{ {#1} \right\}}
\newcommand{\mb}[1]{\mathbb{#1}}
\newcommand{\mc}[1]{\mathcal{#1}}
\newcommand{\mf}[1]{\mathfrak{#1}}
\newcommand{\mr}[1]{\mathrm{#1}}
\newcommand{\norm}[1]{\langle {#1} \rangle}
\newcommand{\PSL}[1]{\mathrm{PSL}(#1,\mathbb{R})}
\newcommand{\rg}[1]{\overline{#1}}
\newcommand{\SL}[1]{\mathrm{SL}(#1,\mathbb{R})}
\newcommand{\SO}[1]{\mathrm{SO}(#1,\mathbb{R})}
\DeclareMathOperator{\B}{\mathfrak{B}}
\DeclareMathOperator{\card}{card}
\DeclareMathOperator{\diag}{diag}
\DeclareMathOperator{\diam}{diam}
\DeclareMathOperator{\dst}{\partial st}
\def\hd{\mathrm{dim}_{\mathrm{Haus}}}
\DeclareMathOperator{\id}{id}
\DeclareMathOperator{\isom}{Isom}
\DeclareMathOperator{\ost}{ost}
\DeclareMathOperator{\rank}{rank}
\DeclareMathOperator{\st}{st}
\DeclareMathOperator{\supp}{supp}
\DeclareMathOperator{\tr}{tr}
\begin{document}

\maketitle

\begin{abstract}
We extend several notions and results from the classical Patterson--Sullivan theory to the setting of Anosov subgroups of higher rank semisimple Lie groups, working primarily with invariant Finsler metrics on associated symmetric spaces. In particular, we prove the equality between the Hausdorff dimensions of  flag limit sets, computed with respect to a suitable Gromov (pre-)metric on the flag manifold, and the {\em Finsler critical exponents} of Anosov subgroups.
\end{abstract}


Consider a discrete group $\G$ of isometries of the $n$-dimensional hyperbolic space $\mb{H}^n$. The {\em critical exponent} 
$\delta$ is a fundamental numerical invariant associated with $\Gamma$ which measures the asymptotic growth rates of 
$\G$-orbits in $\mb{H}^n$. The relation between the Hausdorff dimension of the limit set $\L(\G)$ of $\G$ and 
its critical exponent is now a classical result. In an influential paper \cite{MR556586}, Sullivan proved the following theorem extending pioneering work by Patterson \cite{MR0450547} on  Fuchsian groups: 

\begin{thmn}[{\cite[Thm. 8]{MR556586}}]
Let $\G$ be a convex-cocompact subgroup of the isometry group of $\mb{H}^n$. Then the critical exponent $\d$ of $\G$ equals to the Hausdorff dimension of $\L(\G)$.
\end{thmn}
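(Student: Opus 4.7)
The plan is to build a $\G$-invariant conformal density on the limit set $\L(\G)$ whose dimension is $\d$, and then use convex cocompactness to show that this density behaves like an Ahlfors $\d$-regular measure with respect to a visual metric on $\L(\G)$; the Hausdorff dimension then drops out of standard mass distribution arguments.

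First I would construct a Patterson--Sullivan measure. Fix a basepoint $o \in \mb{H}^n$ and consider the Poincar\'e series
\[
g_s(o,o) = \sum_{\g \in \G} e^{-s \, d(o,\g o)},
\]
which converges for $s > \d$ and diverges for $s < \d$. Form finite probability measures $\mu_s$ on $\mb{H}^n \cup \vb \mb{H}^n$ by weighting a Dirac mass at $\g o$ by $g_s(o,o)^{-1} e^{-s d(o,\g o)}$; in the divergent case, extract a weak-$\ast$ subsequential limit $\mu$ as $s \to \d^+$, which is automatically supported on $\L(\G)$. The convergent case is handled by Patterson's trick of multiplying each term by a slowly varying function to force divergence at the critical exponent. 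Standard manipulations show that the resulting $\mu$ satisfies the $\d$-conformal transformation rule
\[
\frac{d\g_\ast \mu}{d\mu}(\xi) = e^{-\d \beta_\xi(\g^{-1} o,\, o)}
\]
for every $\g \in \G$ and $\mu$-almost every $\xi \in \L(\G)$, where $\beta_\xi$ is the Busemann function at $\xi$.

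Next I would prove Sullivan's shadow lemma: for all sufficiently large $R > 0$, the shadow $\mc{O}_R(\g o) \subset \vb \mb{H}^n$ cast by the ball $B_R(\g o)$ as seen from $o$ satisfies
\[
\mu\big(\mc{O}_R(\g o)\big) \asymp e^{-\d \, d(o,\g o)}
\]
with implied constants independent of $\g \in \G$. The upper estimate is a direct consequence of conformality together with the Busemann cocycle identity on shadows; the lower estimate needs, in addition, convex cocompactness to guarantee a uniform lower bound on the $\mu$-mass pushed into $\mc{O}_R(\g o)$ by elements of $\G$ whose action places orbit points deep enough along rays.

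Finally I would upgrade the shadow lemma to Ahlfors $\d$-regularity of $\mu$. In a visual metric $d_o$ on $\vb \mb{H}^n$ based at $o$, the shadow $\mc{O}_R(\g o)$ is comparable to a $d_o$-ball of radius $e^{-d(o,\g o)}$, and any small $d_o$-ball around $\xi \in \L(\G)$ is sandwiched between two such shadows. Here convex cocompactness is essential: it ensures that for every $\xi \in \L(\G)$ and every sufficiently small $r > 0$, an orbit point $\g o$ lies within uniformly bounded distance of the ray $[o,\xi)$ at depth $\asymp |\log r|$. Combining this with the shadow lemma yields $\mu(B_{d_o}(\xi,r)) \asymp r^{\d}$ uniformly in $\xi$ and $r$, which is Ahlfors $\d$-regularity and therefore gives $\hd(\L(\G)) = \d$. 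The principal obstacle is precisely the uniformity of these lower bounds; without convex cocompactness one generally obtains only $\hd(\L(\G)) \leq \d$, so extracting the matching lower bound is where the hypothesis must do real work.
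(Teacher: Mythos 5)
Your outline is the standard Sullivan argument, and it is the proof the paper is implicitly pointing to: the statement is only quoted here as Theorem~8 of \cite{MR556586}, and the paper does not reprove it in rank one but rather replays this exact template in higher rank (Sections \ref{sec:CD}, \ref{thm:SL}, \ref{sec:dim}, \ref{sec:HD}). Your structure — Patterson--Sullivan density of dimension $\d$, shadow lemma, upgrade to Ahlfors $\d$-regularity of $\mu$ in a visual metric, mass distribution — is correct.

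One attribution to adjust: the lower bound in the shadow lemma already holds for any non-elementary discrete group acting on $\mb{H}^n$; no convex cocompactness is needed there. (This is visible in the paper's own generalization, Theorem \ref{thm:SL}, which is stated for nonelementary $\tmod$-RA subgroups, a hypothesis much weaker than Anosov.) The proof of the lower estimate uses that $\mu$ is not concentrated at a single point and that $\g^{-1}$ contracts toward one limit point while expanding away from another, plus compactness of the sphere — nothing more. Convex cocompactness enters exclusively in your final step, exactly where you say ``here convex cocompactness is essential'': it makes every $\xi \in \L(\G)$ a uniformly conical limit point, so that for every small scale $r$ a $d_o$-ball around $\xi$ of radius $r$ is sandwiched between shadows $\mc{O}_R(\g o)$ with $d(o,\g o) \asymp |\log r|$. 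That uniformity is what produces the two-sided estimate $\mu(B_{d_o}(\xi,r)) \asymp r^{\d}$; without it the shadow lemma alone still gives the easy inequality $\hd(\L(\G)) \leq \d$ but not the reverse.
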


Later Sullivan generalized this theorem for geometrically finite Kleinian groups \cite{MR766265}.
An important ingredient of Sullivan's proof of this theorem is the existence of a finite, non-null Borel measure on $\L(\G)$ that changes {\em conformally} under the  $\Gamma$-action. The construction of such measure goes back to Patterson's original idea in \cite{MR0450547}.
Measures of this type (resp. a class of ``well-behaved'' measures) are commonly referred as {\em Patterson--Sullivan measures} (resp. {\em densities}). We refer to Nicholls' book \cite{nicholls1989ergodic} for a self-contained exposition on these results.

Since its introduction, the theory of Patterson and Sullivan has attracted a lot of attention. Further developments have been made by various people who  analyzed more general classes of discrete groups and their limit sets. 
We list some of these developments here. 
Corlette \cite{MR1074486} and Corlette--Iozzi \cite{MR1458321} proved the above theorem for geometrically finite groups of isometries of rank-one symmetric spaces, and 
Bishop-Jones \cite{MR1484767} extended these results to arbitrary discrete isometry groups  of rank-one symmetric spaces.
Yue \cite{MR1348871} and Ledrappier \cite{Ledrappier} studied the case of Hadamard spaces of  negative curvature. 

There has been a considerable amount of development to understand the Patterson--Sullivan theory for discrete subgroups of higher rank semisimple Lie groups acting on its symmetric space, starting with Bishop--Steger \cite{MR1208564} and Burger \cite{MR1230298} in the rank-two case. Later, Albuquerque \cite{MR1675889}, Quint \cite{MR1935549,MR1933790}, and Link \cite{Lin04} considered the case of Zariski-dense discrete subgroups in the isometry groups of general higher rank symmetric spaces. Link \cite{MR2629900} also studied the case of products of rank-one symmetric spaces.
In Appendix \ref{appen:PShigherrank}  we discuss  these papers in relation to our work in more detail.

In the more abstract setting of Gromov hyperbolic spaces, much of Sullivan's work in  \cite{MR556586} was generalized by Coornaert \cite{MR1214072} to the class of quasi\-convex-cocompact groups.
See also work of Paulin \cite{MR1480536} on  actions of  subgroups of Gromov hyperbolic groups.
Recent developments by Das--Simmons--Urba\'{n}ski \cite{MR3558533} 
achieved greater generalizations of the Patterson--Sullivan theory (e.g., a generalization of Bishop-Jones' theorem) in the case of ``infinite-dimensional'' Gromov hyperbolic spaces. 

The goal of this paper is to study the Patterson--Sullivan theory for {\em Anosov subgroups}.
The notion of Anosov subgroups was first introduced by Labourie \cite{MR2221137} to study $\PSL{n}$-Hitchin representations \cite{hitchin} of closed surface groups.
This was further developed by Guichard-Wienhard \cite{MR2981818} and Kapovich-Leeb-Porti \cite{Kapovich:2014aa,MR3736790,kapovich2014morse}. Notably, Anosov subgroups extend the class of convex-cocompact subgroups of rank-one semisimple Lie groups to higher rank. 

In this paper, we primarily work with some of the Kapovich-Leeb-Porti's characterizations of Anosov subgroups.
We briefly review these characterizations, the ones which we will need for this paper (namely, {\em $\tmod$-URU}, {\em $\tmod$-Morse}, and {\em $\tmod$-RCA}), in Subsection \ref{sec:discretegroups}. 
Since we do not use the original notion of Anosov representations as introduced by Labourie \cite{MR2221137}, and this notion have become classical at the time of the writing the paper, we do not include Labourie's definition.
Readers who are interested to understand the connection between Labourie's definition and Kapovich-Leeb-Porti's characterizations are encouraged to read \cite[Subsec. 5.11]{MR3736790}. For instance, it is shown in \cite[Subsec. 5.8]{MR3736790} that Anosov and Morse properties are equivalent. In the same paper \cite{MR3736790}, the authors show that Morse (or any other equivalent notion, including RCA) implies the URU property. Finally, in \cite{kapovich2014morse}, it is shown that URU implies Morse.

\subsection*{Main results}
Let $G$ be a noncompact real semisimple Lie group, 
$X = G/K$ be the associated symmetric space and 
$\G$ be a {\em $\tmod$-Anosov} subgroup of $G$. We will be assuming 
several conditions on $G$ and $X$; they are labeled as ``assumption'' in Section \ref{sec:gp}.
We consider two types of $G$-invariant (pseudo-)metrics on $X$, namely, one is a $G$-invariant Riemannian metric $\dr$ of the symmetric space $X$, and the other one is a $G$-invariant Finsler distance $d_\tb$ which depends on the choice of a direction $\tb$ in the Weyl chamber (see Section \ref{sec:CE}). The {\em critical exponents} of $\G$ with respect to these two metrics, denoted by $\dR$ and $\dF$, respectively, are defined in the usual fashion, i.e., as the exponents of convergence  of an associated {\em Poincar\'e series} (see Section \ref{sec:CE}).
Using the classical construction of Patterson, we define a {\em $\G$-invariant $\tb$-conformal density} on the flag limit set of  $\G$ (see Section \ref{sec:CD}).

Throughout this paper, the Finsler metric $d_\tb$ is given more emphasis than its Riemannian counterpart. For example, the construction of the above mentioned Patterson--Sullivan density is carried out in terms of the Finsler metric. The main reason for this choice is that Finsler metrics reflect the asymptotic geometry of $\G$ better than the Riemannian metric. In fact, an interesting feature of $d_\tb$ (for suitably chosen $\tb$) is that the orbits of Anosov subgroups in $X$ are Gromov-hyperbolic spaces when equipped with $d_\tb$. See Corollary \ref{cor:hypano}.

Let $\smod$ be a maximal simplex in the Tits building of $X$, $\i:\smod\ra\smod$ be the opposition involution, $\tmod$ be an $\i$-invariant face of $\smod$, $P$ be the maximal parabolic subgroup of $G$ that stabilizes $\tmod$, and $\Ft = G/P$ be the partial flag manifold associated to the face $\tmod$. We fix an {\em $\iota$-invariant type}\footnote{Most of our main results are still valid without the assumption that $\tb$ is $\iota$-invariant; see the remark after Theorem \ref{mainB}.} $\tb\in \mr{int}(\tmod)$.

\begin{customthm}{A}\label{mainA}
Let $\G$ be a nonelementary $\tmod$-Anosov subgroup of $G$ and $\dF$ be the Finsler critical exponent for the action of $\G$ on the symmetric space $(X,d_\tb)$.
Then the Patterson--Sullivan density\footnote{See Definition \ref{definition:PSdensity}.} $\mu^\tb$ on the flag limit set $\LT\subset \Ft$ is the unique (up to scaling) $\G$-invariant $\tb$-conformal density.
Moreover,
\begin{enumerate}[(i)]
\setlength\itemsep{0em}
\item The density $\mu^\tb$ is non-atomic and its dimension equals to $\dF$.  \label{mainthm:part1}
\item The support of $\mu^\tb$ is $\LT$ and the action $\G\acts \LT$ is ergodic with respect to $\mu^\tb$. \label{mainthm:part11}
\item The critical exponent $\dF$ (as well as the Riemannian critical exponent $\dR$) is positive and finite.\label{mainthm:part2}
\item The $\tb$-Poincar\'e series of $\G$ diverges at the critical exponent $\dF$. In other words, $\G$ has $\tb$-divergence type.\label{mainthm:part3}
\item The $\dF$-dimensional Hausdorff measure on $\LT$ with respect to a Gromov (pre-)metric\footnote{See Section \ref{sec:dg}.} is a member of a $\G$-invariant conformal density (called the Hausdorff density). In particular, the Hausdorff dimension of $\LT$ is $\dF$.\label{mainthm:part4}
\end{enumerate}
\end{customthm}

While constructions of conformal densities for discrete subgroups of semisimple Lie groups were done earlier (in the Zariski-dense case) by Albuquerque \cite{MR1675889}  and Quint \cite{MR1933790}\footnote{See Appendix \ref{appen:PShigherrank} where we give a brief discussion about these papers.}, most of the results of our Theorem \ref{mainA} are not contained in their work, even in the Zariski-dense setting.
Note that the theorem is false in general for nonelementary discrete subgroups of rank-one Lie groups which are not convex-cocompact.
The proof of the theorem strongly relies on the $\tmod$-Anosov condition.

The fact that $\mu^\tb$ is the unique $\G$-invariant $\tb$-conformal density is proven in Corollary  \ref{cor:uniqueAnosovDensity}. The main ingredients in the proof are a generalization of Sullivan's {\em shadow lemma} proven in Theorem \ref{thm:SL}, and an ergodicity argument (see Theorem \ref{thm:ergodic}) due to Sullivan.
The proof of part (\ref{mainthm:part1}) of the theorem is given in Corollaries \ref{cor:nonconicalAtom} and \ref{cor:dimAno}.
The second half of part (\ref{mainthm:part11}) follows from Theorem \ref{thm:ergoAno} while the first half follows from the facts that the support of $\mu^\tb$ is a closed $\G$-invariant subset of $\LT$ and the action $\G \acts \LT$ is minimal.
The part (\ref{mainthm:part2}) is proven in Propositions \ref{prop:finiteCE} and \ref{cor:positiveCE}. See also the remarks following these propositions where $\dR$ is analyzed.
The part (\ref{mainthm:part3}) follows from Corollary \ref{cor:div}.
The Hausdorff density in part (\ref{mainthm:part4})  is studied in Section \ref{sec:HD} (cf. Theorem \ref{thm:hd}). The background Gromov (pre-)metric is introduced in Section \ref{sec:dg} where we also prove that the action $\G \acts \LT$ with respect to this metric is {\em conformal} (see Corollary \ref{cor:gmetricano}).

We should note that some of the results in this paper are proven for more general classes of discrete subgroups of $G$ 
with the hope that the results may be useful, for instance, in the study of {\em relatively Anosov subgroups}.\footnote{Relatively Anosov subgroups defined by Kapovich-Leeb \cite{relmorse} are an extension  of the class of geometrically finite groups into the higher rank.}

For a much wider class of (uniformly) {\em $\tmod$-RA}\footnote{See Subsection \ref{sec:discretegroups} for this definition.} subgroups we prove the following results.

\begin{customthm}{B}\label{mainB}
 Let $\G < G$ be a nonelementary $\tmod$-RA subgroup of $G$, and let $\dF$ be its $\tb$-critical exponent. Then $\dF \in (0,\infty]$. Let $\mu$ be a $\beta$-dimensional $\G$-invariant $\tb$-conformal density (if exists).
 \begin{enumerate}[(i)]
 \setlength\itemsep{0em}
 \item (Shadow Lemma) Fix $x,x_0\in X$. There exists $r_0>0$ such that for all $r\ge r_0$ and all $\g\in\G$ satisfying $\dr(x,\g x_0)> r$,  
$
\mu_{x}(S(x: B(\g x_0,r))) \asymp \exp \left(-\beta \df(x,\g x_0)\right)
$
(Theorem \ref{thm:SL}).
 \item $\beta \ge \dF - \dF^\con$ (Theorem \ref{thm:lowerbound}).
 \end{enumerate}
 If we further assume that $\G$ is uniformly $\tmod$-regular, then
\begin{enumerate}[(i)]
\setlength\itemsep{0em}
\setcounter{enumi}{2}
 \item $\dF$ is finite (Proposition \ref{prop:finiteCE}), and
 \item the density $\mu$ cannot have atoms at conical limit points (Corollary \ref{cor:nonconicalAtom}).
\end{enumerate}
\end{customthm}

\begin{rem*}
 Theorem \ref{mainA}, with the exception of the item (\ref{mainthm:part4}), and Theorem \ref{mainB} remain valid without the assumption that the type $\tb\in\mr{int}(\tmod)$ is {\em $\iota$-invariant}.
 In Appendix \ref{appen:without_iota_invariance}, we show how to generalize these statements without this assumption.
\end{rem*}

\medskip\noindent
{\bf Some historical remarks.} The early work on critical exponent  the Patterson--Sullivan theory 
in the higher rank was mostly developed for general (but, typically, Zariski dense) 
discrete subgroups of higher rank Lie groups; we discuss this early work (in relation to our paper) in Appendix \ref{appen:PShigherrank}.  Since 
the introduction of Hitchin representations of surface groups (and proof of their Anosov property by Labourie) and, more generally, Anosov representations of hyperbolic groups, a substantial work was done investigating different versions of critical exponent 
and the Patterson--Sullivan theory, and their applications. Below is a brief discussion of this work.  

For certain classes of Anosov subgroups, the Patterson--Sullivan theory was used by Sambarino in \cite{MR3229035,MR3449196}  to 
solve certain counting problems, while in  \cite{BCLS15}  Bridgemann--Canary--Labourie--Sambarino used related thermodynamic formalism to construct pressure metrics on spaces of Hitchin representations. 
Moreover, Glorieux--Monclair \cite{GM16} studied the Patterson--Sullivan theory in the case of {convex-cocompact} subgroups of the isometry group of $\mb{H}^{p,q}$ equipped with the pseudo-Riemannian metric. 
In their  work \cite{PS17}, Potrie and Sambarino  prove some interesting inequalities for critical exponents 
of Hitchin representations and a beautiful  rigidity theorem characterizing ``Fuchsian'' representations 
in the Hitchin component, which are reminiscent of the earlier inequalities for critical exponents and rigidity theorems (going back to the work of R.~Bowen) for Kleinian groups, except that the inequalities are in the opposite direction. 

While working on this article, we came to know about two very recent developments by Pozzetti--Sambarino--Wienhard \cite{Pozzetti:2019aa} and Glorieux--Monclair--Tholozan \cite{GMT19}, which are related to our work.
Independently, the authors of these articles proved that the Hausdorff dimension of the limit set of a {\em projective} Anosov subgroup $\G$ 
in the real projective space with respect to the Riemannian metric is bounded above by a certain critical exponent, called the
``simple root critical exponent'' in the second article.
The main result of \cite{Pozzetti:2019aa} is stronger than this inequality for a special class of {\em (1,1,2)-hyperconvex} representations, in which case the Hausdorff dimension equals to the simple root critical exponent.
They went further to prove that for hyperconvex subgroups $\Gamma$ having $\partial\Gamma$ homeomorphic to a sphere, the limit set of $\Gamma$ in the projective space is a $C^1$ sphere.
As a corollary of these two results, they obtained an earlier result of Potrie--Sambarino \cite{PS17} on the entropy of Hitchin representations.
In \cite{GMT19}, the authors 
also aimed to get a lower bound for the Hausdorff dimension of the limit set of general projective Anosov subgroups $\Gamma$.
As it is mentioned in \cite{GMT19}, initially, the authors aimed  to obtain such a lower bound with respect to the Riemannian metric;  eventually, they proved such a lower bound for a certain Gromov metric on the limit set.
Using our Theorem \ref{mainA} and relying on previously known computations of  Busemann functions (see Example \ref{ex:2.3}), we obtain a lower bound for this Hausdorff dimension with respect to the Riemannian metric (see Theorem \ref{thm:PARhd}).  

After this work was completed, Andr\'{e}s Sambarino informed us that Ledrappier's methods from \cite{Ledrappier} (in conjunction with results of \cite[Sec. 3.2]{BCLS15}) can be used to obtain some of the results of our paper; we refer the reader to \cite{MR3229035,MR3449196} for similar applications of 
Ledrappier's work. 

\subsection*{Acknowledgement} 
This project was a part of the first author's dissertation work at UC Davis.
The second author was partly supported by the NSF grant DMS-16-04241. We are grateful to Olivier Glorieux and Hee Oh for pointing out some mistakes in a previous version of this paper and to Andr\'{e}s Sambarino for telling us about Ledrappier's work and related results.
Finally, we are thankful to the anonymous referee for carefully reading our paper and making numerous suggestions to improve the text.

\setcounter{tocdepth}{2}
\tableofcontents

\section*{Notations}
\begin{itemize}
\setlength\itemsep{0em}
\item $\B(Y)$: Class of Borel subsets of a topological space $Y$
\item $B(x,r)$: (Closed) ball of radius $r$ centered at $x$
\item $\df, \dr$: Finsler and Riemannian metrics, respectively, on $X$ (see Sec. \ref{sec:CE})
\item $\fg{xy}$, $\rg{xy}$: Finsler\footnote{Note that Finsler geodesic segments connecting two points in $X$ are usually non-unique.} and Riemannian geodesic segments, respectively, connecting $x,y\in X$
\item $\dF$, $\dR$: Finsler and Riemannian critical exponents, respectively, of $\G$ (see Sec. \ref{sec:CE})\
\item $\dg{\e}_x$: Gromov premetric (see Def. \ref{def:dg})
\item $\dhor_\t$: Busemann cocycle  (see (\ref{eqn:horodist}))
\end{itemize}

\section{Geometric preliminaries}\label{sec:gp}
In this section, we briefly present some background material needed for the paper.

\subsection{Symmetric spaces} A {\em symmetric space} $X$ is a Riemannian manifold that has an {\em inversion symmetry} or {\em point-reflection} with respect to each point $x\in X$: This is an isometric involution $s_x: X \ra X$ fixing $x$ and sending each tangent vector at $x$ to its negative. In this paper we only consider symmetric spaces which are simply-connected and have {\em noncompact type}. The later means that $X$ has no flat deRham factor and the sectional curvature of $X$ is non-positive. In particular, $X$ is a Hadamard manifold and, hence, is diffeomorphic to a euclidean space. We refer to Eberlein's book \cite{MR1441541} for a detailed discussion of symmetric spaces.

\begin{asm}
The symmetric spaces $X$ is simply-connected and of noncompact type.
\end{asm}

A symmetric space $X$ can be written as $G/K$ where $G$ is a semisimple Lie group whose Lie algebra does not have compact and  abelian factors, and $K$ is a maximal compact subgroup of $G$. Moreover, this group $G$ can be chosen to have finite center and be commensurable with the isometry group $\isom(X)$ of $X$. For example, one can choose $G$ to be the identity component of $\isom(X)$. 

\begin{asm}
The semisimple Lie group $G$ has finite center and is commensurable with the isometry group $\isom(X)$ of the symmetric space $X$.
\end{asm}

Each point $x\in X$ determines a canonical decomposition  of the Lie algebra $\mathfrak{g}$ of $G$ called the {\em Cartan decomposition},
\[
\mathfrak{g} = \mf{k} + \mf{p}
\]
where $\mf{k}$ is tangent to the stabilizer of a point $x \in X = G/K$, and $\mf{p}$ can be realized as the tangent space of the symmetric space $X$ at $x$. The dimension of a maximal abelian subalgebra $\mf{a}\subset \mf{p}$ is called the {\em rank} of $X$. The exponential map $\exp_x: \mf{p} \ra X$ identifies $\mf{a}$ with a maximal flat $F\subset X$ through $x$ and, hence, the rank of $X$ can also be defined as the dimension of a maximal totally geodesic flat  subspace  in $X$.
A chosen maximal flat $\fmod \subset X$ is called the {\em model flat} which we isometrically identify with $\R^{k}$ where $k=\rank(X)$. The image in $\isom(F)$ of the $G$-stabilizer of 
$\fmod$ is isomorphic to $\R^k \rtimes W$, where the first factor acts on $\fmod \cong \R^k$ by translations while the second factor $W$, called the {\em Weyl group}, is finite, fixes the origin, and is generated by hyperplane reflections. The closures of the connected components of the complement of the reflecting hyperplanes (for hyperplane reflections in $W$) in $\fmod$ are called \emph{chambers}. A chosen chamber is called the \emph{model Weyl chamber};  we denote it by $\D$.

\subsection{Boundary at infinity}\label{sec:boundary}
For a symmetric space $X$, there are multiple notions of (partial) boundary at infinity.
The space of equivalence classes of asymptotic rays is called the {\em visual boundary} of $X$ and denoted $\vb X$. The visual boundary is naturally identified with the unit tangent sphere $T^1_x X$ at any point $x\in X$. The topology it gets from this identification is called the {\em visual topology}.
Attaching the visual boundary to $X$ provides a compactification of $X$.

Another (strictly finer) topology on $\vb X$ is given by the $G$-invariant {\em Tits angle metric}:
\[
\angle_{\mr{Tits}}(\zeta,\eta) = \sup_{x\in X}\angle_x(\zeta,\eta)
\]
where $\angle_x(\zeta,\eta)$ denotes the angle between the rays emanating from $x$ and asymptotic to $\zeta$ and $\eta$. The boundary $\vb X$ with this topology is  called the {\em Tits boundary} $\partial_{\mr{Tits}} X$.

The Tits boundary $\partial_{\mr{Tits}} X$ carries a canonical $G$-invariant structure of a spherical simplicial complex called the {\em Tits building} of $X$. This can be understood as follows: Consider the ideal boundary $\vb\fmod$ of $\fmod$ where $k=\rank(X)$. This is identified with the unit sphere $\a^1$ of $\a$ and thus, we have an action of the Weyl group $W\acts \partial_{\mr{Tits}} \fmod$. The pair $(\partial_{\mr{Tits}} \fmod, W)$ is a spherical Coxeter complex which generates a spherical simplicial complex structure on entire $\partial_{\mr{Tits}} X$ by the $G$-action.

\begin{asm}\label{asm:3}
We assume that the Tits building is \emph{thick}, i.e., every simplex of codimension one is a face of three maximal simplices.\footnote{This is a standing assumption on spherical buildings in the papers by Kapovich, Leeb and Porti we rely upon in our work. See for instance, \cite[Lemma 2.4]{MR3736790} and its usage elsewhere in that paper.} 
\end{asm}

We do not have to worry about this assumption when $X$ is an irreducible symmetric space. The Tits building of $X$, in that case, is thick. Nevertheless, we impose this assumption to avoid situations like in the following example.

\begin{exmp}
 Let $X = \mathbb{H}^2 \times \mathbb{H}^2$, and $G = \PSL{2}\times\PSL{2}\times (\mathbb{Z}/2\mathbb{Z})$, where the nontrivial element in $\mathbb{Z}/2\mathbb{Z}$ acts by swapping the factors of $X$.
 The corresponding Tits building of $X$ is not thick.
\end{exmp}

We denote the intersection of  $\D$ with the unit sphere in $\fmod$ centered at the origin by $\smod$. This is a fundamental domain for the action $W\acts \partial_{\mr{Tits}} \fmod$ where $\partial_{\mr{Tits}} \fmod$ is identified with the unit sphere in $\fmod$ centered at the origin. 
We call $\smod$  the {\em model chamber}. Any other chamber (i.e., a top-dimensional simplex) in the Tits building is naturally identified with $\smod$ via a $G$-equivariant map, called the {\em type map},
\[\theta : \partial_{\mr{Tits}} X \ra \smod.\]
We reserve the notation $\tmod$ for the faces of $\smod$. 
An ideal point $\zeta \in \partial_{\mr{Tits}} X$ (resp. a simplex $\t \subset \partial_{\mr{Tits}} X$) is called of {\em type $\tb\in\smod$} (resp. of {\em type $\tmod \subset \smod$}) if $\theta(\zeta) = \tb$ (resp. $\theta(\t) = \tmod$). 
For $\tb\in\tmod$ and a simplex $\t$ of type $\tmod$, we use the notation $\tb(\t)$ to denote the unique point in $\t$ of type $\tb$.
The {\em opposition involution} $\iota$ is an automorphism of $\smod$ which is defined as the negative of the longest element in the Weyl group.

Two simplices $\tau_1,\tau_2$ in the Tits building are called \emph{antipodal} if there exists a point-reflection $s_x$ swapping these two. Their types are related by $\theta(\t_1) = \iota \theta(\t_2)$. In particular, when $\t_1$ has an $\i$-invariant type $\tmod$, then any antipodal simplex $\t_2$ also has type $\tmod$. In this paper, we only consider types that are $\i$-invariant.  

We now describe an important class of partial boundaries of $X$ which are central to our study. Consider the action of $G$ on the Tits building. The stabilizer of a face $\tmod$ of $\smod$ is a parabolic subgroup $P_\tmod$ of $G$ and we identify the quotient $G/P_\tmod$ with the set of all simplices  of type $\tmod$ in the Tits building. This quotient $G/P_\tmod$ is a smooth compact manifold, 
called the {\em partial flag manifold} of type $\tmod$ and is denoted  $\Ft$.
The partial compactification of $X$ by attaching $\Ft$ is denoted 
\[
\Xt = X\cup\Ft
\]
which is topologized via the topology of {\em flag convergence} (see Subsection \ref{sec:discretegroups}).
In the special case when $\tmod = \smod$, the associated parabolic subgroup $P_\smod$ is minimal and  $\Flag(\smod) = G/P_\smod$ is the {\em full flag manifold}, also called the {\em Furstenberg boundary} of $X$. 

A subset $A\subset \Ft$ is called {\em antipodal} if any two distinct simplices in $A$ are antipodal.

\subsection{$\D$-valued distances and a generalized triangle inequality}
There is a canonical map $d_\D: X\times X \ra \D$ which is defined as follows: For a pair of points $(x,y)$ in $X$, 
there is an element $g\in G$ which maps $x$ to the origin in $\Delta$ and $y$ to a point $v\in\D$. We define $d_\D(x,y) = v$.
Note that the norm $\| d_\D(x,y)\|$ (induced by the euclidean inner product on $\fmod \cong\R^k$) equals $\dr(x,y)$ where $\dr$ denotes the distance function induced by the Riemannian metric on $X$.

For a pair $(x,y)\in X\times X$, the value $d_\D(x,y)$ is called the {\em $\D$-valued distance} between $x$ and $y$. This is a complete $G$-congruence invariant for oriented line segments in $X$.
The $\D$-valued distances satisfy generalized triangle inequalities (see \cite{MR2472176}). In the paper we will need the following triangle inequality. For $x,y,z\in X$,
\be\label{eqn:ti}
\| d_\D(x,y) - d_\D(x,z) \| \le \dr(y,z).
\ee

\subsection{Parallel sets, cones, and diamonds}\label{sec:star}
For a detailed discussion on this subsection, we refer to \cite[Subsec. 2.4]{Kapovich:2014aa}, \cite[Subsec. 2.5]{MR3736790}.

Let $\t_\pm$ be a pair of antipodal simplices in the Tits building of $X$. The {\em parallel set} $P(\t_+,\t_-)$ is the union of all maximal flats in $X$ whose ideal boundary contains $\t_+\cup \t_-$ as a subset. This is a totally geodesic submanifold of $X$. 

For a simplex $\t$, the {\em star} $\st(\t)$ of $\t$ is the union of all chambers in the Tits building containing $\t$. The {\em open star} $\ost(\t)$ of $\t$ is the union of all the open simplices whose closures contains $\t$. For a face $\tmod$ of $\smod$ (viewed as a complex), define the open star $\ost(\tmod)$ similarly. The {\em boundary} $\dst(\tmod)$ is the complement of $\ost(\tmod)$ in $\smod$. 

Let $\tmod$ be an $\i$-invariant face of $\smod$.
An ideal point $\xi \in\vb X$ is called {\em $\tmod$-regular} if its type is contained in $\ost(\tmod)$. 
Moreover,  given an $\i$-invariant compact subset $\Theta\subset \ost(\tmod)$, an ideal point $\xi \in\vb X$ is called {\em $\Theta$-regular} if its type is contained in $\Theta$. 
A nondegenerate geodesic segment (or line or ray) in $X$ is called {\em $\tmod$-regular}  (resp. {\em $\Theta$-regular}) if the ideal endpoints of its line extension are $\tmod$-regular (resp. $\Theta$-regular). 

For a simplex $\t$ in the Tits building and a point $x\in X$, the {\em $\tmod$-cone $V(x,\st(\t))$ with apex $x$} is the union of all rays emanating from $x$ asymptotic to a point $\xi\in\st(\t)$. 
For a $\tmod$-regular geodesic segment $\rg{xy}\subset X$, the {\em $\tmod$-diamond} $\diamondsuit_\tmod(x,y)$ is the intersection of the opposite cones $V(x,\st(\t_+))$ and $V(y,\st(\t_-))$ containing it. The points $x$ and $y$ are called the {\em endpoints} of $\diamondsuit_\tmod(x,y)$. The cones and parallel sets can be interpreted as limits of diamonds where, respectively, one or both endpoints diverges to infinity. All of these are convex subsets of $X$ (see \cite[Prop. 2.14]{Kapovich:2014aa}, \cite[Prop. 2.10]{MR3736790}). In particular, the cones are {\em nested}: For every $y\in V(x, \st(\tau))$, $V(y, \st(\tau))\subset V(x, \st(\tau))$.

Let $\Theta$ be an $\i$-invariant compact subset of $ \ost(\tmod)$. In a similar way as above, the {\em $\Theta$-cone $V(x,\ost_\Theta(\t))$ with apex $x$} is the union of all rays emanating from $x$ asymptotic to a point $\xi\in\st(\t)$ of type $\Theta$. Note that $V(x,\ost_\Theta(\t))$ is strictly contained inside $V(x,\st(\t))$.

\subsection{Morse embeddings}\label{sec:morse}
 The {\em Morse property} in higher rank was introduced by Kapovich-Leeb-Porti in \cite{Kapovich:2014aa}.

Recall that a {\em quasigeodesic} in $X$ is a {quasiisometric} embedding $\phi:I\ra X$ of an interval $I\subset\R$. We say that $\phi$ is {\em $\tmod$-regular quasigeodesic} if for all sufficiently separated points $t_1,t_2 \in I$, the segment $\rg{\phi(t_1)\phi(t_2)}$ is $\tmod$-regular. We say that $\phi$ is a {\em $\tmod$-Morse quasigeodesic} if it is $\tmod$-regular and for all sufficiently separated points $t_1,t_2 \in I$, the image $\phi([t_1,t_2])$ is uniformly close to $\diamondsuit_\tmod(\phi(t_1),\phi(t_2))$.

Let $Z$ be a geodesic Gromov-hyperbolic metric space (cf. Definition \ref{def:GH}). 

\begin{defn}[Morse embeddings]
 A quasiisometric map $\phi:Z\ra X$ is called a {\em $\tmod$-Morse embedding} if the image of every geodesic is a $\tmod$-Morse quasigeodesic with uniformly controlled coarse-geometric quantifiers: There exists a constant $D>0$ and an $\i$-invariant compact subset $\Theta\subset\ost(\tmod)$ such that if ${z_1z_2}$ is a geodesic segment in $Z$ of length $\ge D$, then $\rg{\phi(z_1)\phi(z_2)}$ is a $\T$-regular geodesic 
in $X$ and the image $\phi([z_1,z_2])$ is $D$-close to $\diamondsuit_\tmod(\phi(z_1),\phi(z_2))$.
\end{defn}

\subsection{Discrete subgroups of $G$ and their limit sets}\label{sec:discretegroups}
We consider discrete subgroups with various levels of regularity and their flag limit sets. Most of these notions 
first appear in the work of Benoist \cite{MR1437472}; our discussion follows 
 \cite{Kapovich:2014aa}  and \cite{MR3736790}. 

We first recall the notion of {\em regular} sequences in $X$. Let $\tmod$ be an $\i$-invariant face of $\smod$. Let $V(0,\dst(\tmod))$ denote the union of all rays in $\D$ emanating from $0$ asymptotic to points $\xi\in\dst(\tmod)$. A sequence $(x_n)$ on $X$ diverging to infinity is {\em $\tmod$-regular} if for all $x\in X$, the sequence $\LR{d_\D(x,x_n)}_{n\in\N}$ in $\D$ diverges away from $V(0,\dst(\tmod))$.
Furthermore, a $\tmod$-regular sequence $(x_n)$ is called {\em uniformly $\tmod$-regular} if the sequence 
$\LR{d_\D(x,x_n)}_{n\in\N}$ in $\D$ diverges away from $V(0,\dst(\tmod))$ at a linear rate,
\[
\liminf_{n\ra\infty} \frac{d\LR{d_\D(x,x_n), V(0,\dst(\tmod))}}{d(0,d_\D(x,x_n))} >0.
\]
where $d$ denotes the euclidean distance on $\D$. Accordingly, a sequence $(g_n)$ in $G$ is $\tmod$-regular (resp. 
uniformly $\tmod$-regular) if for some (equivalently, every) $x\in X$, the sequence $(g_n(x))$ is 
$\tmod$-regular (resp. uniformly $\tmod$-regular). 

Recall from Subsection \ref{sec:boundary} that we have identified $\Ft$ with the set of all simplices of type $\tmod$ in the Tits building of $X$.
Also, recall the notion of the stars $\st(\t)$, and cones $V(x,\st(\t))$ from Subsection \ref{sec:star}.

\begin{defn}[Shadows]\label{defn:shadows}
 For $x\in X$ and $A\subset X$, the {\em shadow of $A$ in $\Ft$ from $x$} is 
\be\label{defn:shadow}
S(x:A) = \{\t \in\Ft \mid A \cap V(x,\st(\t)) \ne \emptyset \}.
\ee
\end{defn}

\begin{rem}\label{defn:shadow_topology}
 The notion of shadows is used in \cite[Subsec. 3.8]{kapovich2014morse} to produce a topology, called the {\em shadow topology}, in $\Ft$.
 This topology is generated by the following basic subsets:
 \[
  S(x:B(y,r)), \quad \text{where }x,y\in X, \text{ and } r>0.
 \]
 Moreover, the shadow topology coincides with the standard topology (i.e., the underlying topological space of a $K$-invariant Riemannian metric) on $\Ft$. See  \cite[Lem. 3.82]{kapovich2014morse}.
\end{rem}

 Let $(g_n)$ be a $\tmod$-regular sequence in $G$.
A sequence $(\t_n)$ in $\Ft$ is called a {\em shadow sequence} of  $(g_n)$ if 
there exists $x\in X$ such that, for every $n\in\N$, $\t_n = S(x:\{g_n x\})$.
A $\tmod$-regular sequence $(g_n)$ is said to be {\em $\tmod$-flag-convergent} 
to $\t\in \Ft$ if a(ny) shadow sequence $(\t_n)$ of $(g_n)$ converges to $\t$. 
This notion of flag-convergence is the same as 
the one originally introduced in  \cite{kapovich2014morse}, where the shadow topology was not yet defined. 

The notion of flag-convergence leads to the definition of  {\em flag limit sets} of  discrete subgroups $\G< G$.

\begin{defn}[Limit sets]
 The {\em $\tmod$-flag limit set}  of a discrete subgroup $\G$ of $G$, denoted by $\LT$, is the subset of $\Ft$ which consists of 
all limit simplices of $\tmod$-flag-convergent sequences on $\G$.
\end{defn}

\begin{rem}
  The flag limit set $\Lt$ is $\G$-invariant. 

\end{rem}

More generally, one defines $\tmod$-flag-limit sets of a subset $Z\subset X$ as the accumulation subset of $Z$ in $\Ft$ with respect to the topology of flag-convergence.

Now, we review definitions of several classes of discrete subgroups of $G$ with various levels of regularities:

\begin{description}
\item[(R)] A discrete subgroup $\G < G$ is {\em $\tmod$-regular} if for all $x\in X$ and all sequences of distinct elements 
$(\g_n)$ in $\G$, the sequence $(\g_n x)$ is $\tmod$-regular. For $\tmod$-regular subgroups $\G$, the flag limit set $\LT$ provides a compactification of the orbit $\Gamma x\subset X$, i.e., 
$\Gamma x\sqcup \LT$ is compact.

\item[(RA)] A $\tmod$-regular subgroup $\G$ is {\em $\tmod$-RA} ({\em regular antipodal}) 
if its limit set $\LT$ is antipodal, i.e., every two distinct elements of $\LT$ are antipodal to each other. 
For $\tmod$-RA subgroups $\G$, the action $\G\acts\LT$ is a {\em convergence action}\footnote{Recall, that an action $\G\acts Z$ is called a {\em convergence action} if the induced action $\G \acts Z^{(3)}$ is properly discontinuous. Here $Z^{(3)}$ denotes the space of all triples of pairwise distinct points in $Z$. The action $\G\acts Z$ is called {\em uniform} convergence action if, in addition, $\G \acts Z^{(3)}$ is a cocompact action.} (see \cite[Prop. 5.38]{Kapovich:2014aa}).
A $\tmod$-RA  subgroup $\G$ is called {\em nonelementary} if $\LT$ consists of at least three (hence infinitely many) points; otherwise $\G$ is called {\em elementary}. If $\G$ is nonelementary then the action $\G\acts\LT$ is {\em minimal}, i.e., every orbit of $\G$ is dense, and $\LT$ is perfect.\footnote{This follows from a general result for convergence actions by Gehring--Martin \cite{MR896224} and Tukia \cite{MR1313451}. See also \cite[Subsec. 3.2]{Kapovich:2014aa} or \cite[Subsec. 3.3]{MR3736790}.}

\item[(RC)] For a $\tmod$-regular subgroup $\G$, a limit simplex $\t \in \LT$ is a {\em conical limit point} 
if there exists $x\in X$, $c>0$ and a sequence $(\g_n)$ of pairwise distinct isometries on $\G$ such that
\[
\dr(\g_n x, V(x,\st(\t))) \le c
\]
where $\dr$ denotes the Riemannian distance on $X$.
The set of all conical limit simplices is denoted by $\Ltc$. A subgroup $\G< G$ is called {\em $\tmod$-RC} if $\LT = \Ltc$.

\item[(RCA)] A subgroup $\G$ is {\em $\tmod$-RCA} if it is both $\tmod$-RA and $\tmod$-RC. 

\item[(U)] A finitely generated subgroup $\G< G$ (equipped with the word metric) 
is said to be {\em undistorted} if one (equivalently, every) orbit map $\Gamma\to \Gamma x\subset X$ is a quasiisometric embedding.

\item[(UR)] A discrete subgroup $\G < G$ is {\em uniformly $\tmod$-regular} if for all $x\in X$ and all sequences of distinct elements  
$(\g_n)$ in $\G$, the sequence $(\g_n x)$ is uniformly $\tmod$-regular.

\item [(URU)] A subgroup $\G<G$ is said to be $\tmod$-URU if it is both $\tmod$-uniformly regular and undistorted.

\item[(Morse)] A discrete finitely generated subgroup (equipped with a word metric) $\G < G$ is called {\em $\tmod$-Morse} if it is word-hyperbolic
and, for an(y) $x\in X$, the orbit map $\G \ra \G x$ is a $\tmod$-Morse embedding. See Subsection \ref{sec:morse}.
\end{description}

In \cite[Equiv. Thm. 1.1]{MR3736790} and \cite{kapovich2014morse}, the properties Morse, RCA and URU are proven to be equivalent to the Anosov property defined by Labourie \cite{MR2221137} and Guichard-Wienhard \cite{MR2981818}. 

\begin{thm}[{\cite[Equiv. Thm. 1.1]{MR3736790}}] \label{thm:equiv}
The following classes of nonelementary discrete subgroups of $G$ are equal:
\begin{enumerate}[(i)]
\setlength\itemsep{0em}
\item $\tmod$-RCA,
\item $\tmod$-Morse,
\item $P_\tmod$-Anosov,
\item  $\tmod$-URU.
\end{enumerate}
\end{thm}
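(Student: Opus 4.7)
The plan is to prove the theorem as a cycle (iv) $\Rightarrow$ (i) $\Rightarrow$ (ii) $\Rightarrow$ (iii) $\Rightarrow$ (iv). First, for (iv) $\Rightarrow$ (i), assume $\G$ is $\tmod$-URU. Uniform $\tmod$-regularity implies that the $\D$-projections $d_\D(x,\g_n x)$ of any divergent orbit sequence escape the wall $V(0,\dst(\tmod))$ at a linear rate, so every limit simplex lies in $\Ft$. To see that $\LT$ is antipodal, suppose for contradiction that two limits $\t_\pm$ are not antipodal; then using the generalized triangle inequality (\ref{eqn:ti}) together with undistortedness one could produce long orbit subsegments whose $\D$-projections linger near $V(0,\dst(\tmod))$, contradicting uniformity. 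Conicality is the deeper claim and requires the higher-rank Morse lemma of \cite{kapovich2014morse}: the orbit-map image of any Cayley geodesic of $\G$ remains uniformly close to $\tmod$-diamonds, so a sequence $(\g_n x)$ converging in flag to $\t$ stays within bounded Riemannian distance of $V(x,\st(\t))$. A byproduct of that argument is that $\G$ is word-hyperbolic.

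For (i) $\Rightarrow$ (ii), word-hyperbolicity of an RCA subgroup follows from the fact that $\G \acts \LT$ is a uniform convergence action on a perfect compactum. To upgrade RCA to Morse, fix sufficiently separated $\g_1,\g_2 \in \G$; antipodality of $\LT$ supplies a uniform compact $\T \subset \ost(\tmod)$ such that $\rg{\g_1 x\,\g_2 x}$ is $\T$-regular, and conicality applied to the $\G$-orbit points along the Cayley geodesic from $\g_1$ to $\g_2$ places them uniformly close to $\Diamond_\tmod(\g_1 x,\g_2 x)$. This is exactly the Morse-embedding condition with constants independent of $\g_1,\g_2$.

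The steps (ii) $\Rightarrow$ (iii) $\Rightarrow$ (iv) follow the framework of Labourie and Guichard-Wienhard. Given Morseness, the $\tmod$-flag convergence of Morse quasigeodesic rays yields a continuous $\G$-equivariant embedding $\vb\G \hookrightarrow \Ft$, with a transverse partner into $\Flag(\i\tmod)$; the diamond-tracking property translates into the exponential contraction on the associated $P_\tmod$-flag bundle that defines the Anosov condition. Conversely, the Anosov contraction rates directly supply uniform $\tmod$-regularity, while the quasiisometric character of the Cartan projection along $\G$-orbits supplies undistortedness.

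The main obstacle is the conicality half of (iv) $\Rightarrow$ (i). In higher rank, parallel sets do not exhibit the sharp contraction one has in rank one, so one cannot mimic the rank-one Morse lemma. My plan there is to follow \cite{kapovich2014morse}: use uniform $\tmod$-regularity to linearize the Cartan projection along orbit rays, then use undistortedness to close up a coarse cone-retraction onto $V(x,\st(\t))$ and show that orbit sequences lie uniformly Hausdorff-close to their retractions.
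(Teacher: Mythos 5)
This theorem is \emph{cited}, not proved, in the paper: the theorem environment itself carries the attribution \texttt{[Equiv.\ Thm.\ 1.1]\{MR3736790\}}, and the surrounding text says explicitly that the equivalences are established in Kapovich--Leeb--Porti \cite{MR3736790} and \cite{kapovich2014morse}. So there is no in-paper proof to compare against; the authors take the statement as a black box. What you have written is an outline of how one might reprove the KLP theorem, which is a separate, substantial project.

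As a sketch, your outline identifies the right external tool (the higher-rank Morse lemma of \cite{kapovich2014morse}), but it has at least one genuine logical gap. In your step (i) $\Rightarrow$ (ii) you write that ``conicality applied to the $\G$-orbit points along the Cayley geodesic from $\g_1$ to $\g_2$ places them uniformly close to $\Diamond_\tmod(\g_1 x,\g_2 x)$.'' This does not follow from the RC condition as defined. Conicality of a limit simplex $\t$ says that \emph{some} sequence $\g_n x$ stays within bounded Riemannian distance of a single cone $V(x,\st(\t))$ with apex at the \emph{fixed} basepoint $x$ — it is a statement about limit points, not about arbitrary pairs of orbit points, and it provides no control near the diamond endpoints $\g_1 x$ and $\g_2 x$. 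Producing uniform diamond-tracking between pairs of orbit points from the RCA hypothesis is exactly the hard content of RCA $\Rightarrow$ URU/Morse in KLP; it is not a corollary of the definition of conical limit point but requires first establishing uniform $\tmod$-regularity and undistortedness. As written, your argument presupposes the structure it is trying to derive. There is a similar softness in the antipodality half of (iv) $\Rightarrow$ (i): the claim that non-antipodal limit simplices would ``produce long orbit subsegments whose $\D$-projections linger near $V(0,\dst(\tmod))$'' is an assertion, not an argument, and in KLP antipodality for URU subgroups is extracted from diamond-tracking (diamonds have antipodal endpoints by construction) rather than from a triangle-inequality contradiction. Finally, asserting that $\G\acts\LT$ is a uniform convergence action requires translating the geometric RC condition into the dynamical notion of conical limit point for convergence groups; that translation is nontrivial and also needs to be supplied.

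Since the paper simply invokes the KLP theorem, the appropriate move for the purposes of this paper is to do the same, and, if a sketch is wanted, to cite the specific implications in \cite{MR3736790} and \cite{kapovich2014morse} rather than to reconstruct them at this level of detail.
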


In the sequel, any discrete subgroup that satisfies any of equivalent conditions in the theorem will be called a {\em $\tmod$-Anosov subgroup}.

\subsection{Illustrating examples}
In this paper, we consider the following two classes of examples.

\begin{exmp}[Product of rank-one symmetric spaces]\label{ex:1}
Let $X$ be a product of $k$ rank-one symmetric spaces $(X_i,d_i)$,
\[
X = X_1 \times\dots\times X_k.
\]
The rank of $X$ is $k$.
Let $G$ be a semisimple Lie group commensurable with the isometry group of $X$. (For example, we may take 
$G = \isom(X_1)\times \dots\times \isom(X_k)$.) The  Assumption \ref{asm:3} in page \pageref{asm:3} amounts to the requirement that $G$ preserves the factors of the 
direct product decomposition of $X$. 

The model maximal flat $\fmod$ can be viewed as the product of some chosen geodesic lines (coordinate axes), one for each deRham factor. The Weyl group $W$ is generated by reflections along the coordinate hyperplanes and the longest element in it is the reflection about the origin. The model Weyl chamber $\D$ can be realized as the nonnegative orthant. 
Here is a formula for the $\Delta$-valued distances: For $(x_1,\dots,x_k), (y_1,\dots,y_k) \in X_1 \times\dots\times X_k$,
\begin{equation}\label{eqn:DeltaValuedRankOne}
 d_\Delta((x_1,\dots,x_k), (y_1,\dots,y_k)) = \left( d_1(x_1,y_1),\dots,d_k(x_k,y_k) \right) \in \mathbb{R}_{\ge0}^k.
\end{equation}
It follows that the opposition involution $\i$ acts on $\Delta$ it trivially.

Recall that the Tits boundary of a product of two symmetric spaces is the simplicial join of their individual Tits buildings and, for rank-one symmetric spaces, the Tits boundary is discrete. These two facts imply that the $(p-1)$-simplices in the Tits building of $X$ for $1\le p\le k$ can be parametrized by $p$-tuples
$(\xi_{r_1},\dots,\xi_{r_p}) \in \vb X_{r_1} \times\dots\times \vb X_{r_p}$, $1\le r_1<\dots< r_p \le k$,
\[
(\xi_{r_1},\dots,\xi_{r_p}) \leftrightarrow \tau = \mr{span}\{\xi_{r_1},\dots,\xi_{r_k}\}.
\]  We say that such a simplex $\tau$ has type $\tmod = (r_1,\dots, r_p)$. The incidence structure can be understood as follows: Two simplices have a common $q$-face if and only if they have $q$ equal coordinates. 

The star $\st(\t)$ of $\t = (\xi_{r_1},\dots,\xi_{r_p})$ is the minimal subcomplex of the Tits building containing all chambers $(\zeta_1,\dots,\zeta_k)$ satisfying $\zeta_{r_i} = \xi_{r_i}$, for all $i\in\{1,\dots,p\}$.

Since the opposition involution $\i$ fixes each chamber point-wise, every face $\tmod$ of $\smod$ and every type is $\i$-invariant.
Every two chambers (resp. faces of the same type) in $\partial_{\mr{Tits}} X$ 
are antipodal to each other unless they have a common face (resp. sub-face).
\end{exmp}

\begin{exmp}[$X = \SL{k+1}/\SO{k+1}$]\label{ex:2}
We take $G = \SL{k+1}$, $K = \SL{k+1}$; the symmetric space $X=G/K$ is identified with the set of all positive definite, symmetric
matrices  in $\SL{k+1}$. In this case $\rank(X) = k$ and $X$ is irreducible. The standard choice of a model flat $\fmod$ 
is the subset of all diagonal matrices $a = \diag(a_1,\dots, a_{k+1})\in  \SL{k+1}$ with positive diagonal entries. 
We identify the model flat with $\a$ via the logarithm map
$$
\log: a= \diag(a_1,\dots, a_{k+1}) \mapsto (\log a_1,\dots, \log a_{k+1})$$ 
where $\a$ is viewed as the hyperplane in $\R^{k+1}$ consisting of all points with zero sum of coordinates.

The Weyl group $W = \mr{Sym}_{k+1}$ acts on $\a$ by permuting the coordinates.
The standard choice for the model Weyl chamber $\D = \a_+$ consists of all the points in $\a$ with decreasing coordinate entries.
 The Cartan projection\footnote{Or the $\D$-valued distance in the sense that $d_\D(x,gx) = \rho(g)$.}
  $\rho: \SL{k+1} \ra \a_+$ can be written as
 $g\mapsto \log a$ where $a$ is associated to $g$ via the 
 {\em singular value decomposition} $g = uav$, $u,v\in\SO{k+1}$. The logarithm of $i$-th singular value of $g$ will be denoted by $\sigma_i(g)$.
 The opposition involution $\i$ maps $(\sigma_1,\dots, \sigma_{k+1})\in\a_+$ to $(-\sigma_{k+1},\dots,-\sigma_1)$.

The Tits building of $X$ can be identified with the incidence geometry of flags in $\R^{k+1}$. 
 The Furstenberg boundary consists of full flags
 \[
 V_1 \subset \dots \subset V_{k+1} = \R^{k+1}, \quad \dim(V_i) = i.
 \]
The partial flags are
 \[
V : \ V_{r_1} \subset \dots \subset V_{r_p} \subset V_{r_{p+1}}= \R^{k+1}, \quad \dim(V_{r_i}) = r_i,
\]
$1\le r_1 < \dots < r_p < r_{p+1} = k+1$, which are elements of $\Ft$ where $\tmod = (r_1,\dots,r_p)$.
The opposition involution maps $\tmod$ to $\i\tmod = (k+1 - r_p,\dots, k+1 - r_1)$. It follows that $\tmod$ is $\i$-invariant if and only if $r_i + r_{p+1 -i} = k+1$, for each $i=1,\dots, p$. The partial flag manifold $\Ft$ consisting of all partial flags $V$ of type $\tmod= (r_1,\dots,r_p)$ naturally embeds into the product of Grassmannians $\gr_{r_1}(\R^{k+1})\times \dots\times  \gr_{r_p}(\R^{k+1})$.

Suppose that $\tmod=(r_1,\dots,r_p)$ is $\i$-invariant.
A pair $V^\pm \in \Ft$ is antipodal if and only if $V^+_{r_i} + V^-_{r_{p+1-i}} = \R^{k+1}$ for each $i=1,\dots, p$.
\end{exmp}

\section{Critical exponent}\label{sec:CE}

On a symmetric space $X = G/K$, we consider two natural (pseudo-)metrics. Let $\dr(\cdot,\cdot)$ 
denote the distance function on $X$ of the (fixed) $G$-invariant Riemannian metric on $X$.
Throughout the paper, we fix an $\iota$-invariant face $\tmod$ of $\smod$, and fix an $\iota$-invariant type $\tb$ in the interior of $\tmod$.

Let $\df$ denote the polyhedral  Finsler (pseudo-)metric\footnote{Our definition is same as the one in \cite[Subsec. 5.1.2]{MR3811766}. It is remarked in the second paragraph of \cite[p. 2571]{MR3811766} that  $\df(x,y) = -b^\tb \circ d_\Delta(x,y)$, where $b^\tb = -\langle \cdot | \tb \rangle$ is the Busemann function on the flat $F_\tmod\supset \Delta$, with the gradient $\tb$ and normalized at the origin.}
 on $X$:
\be\label{def:df}
\df(x,y) = \langle d_\D(x,y) | \tb \rangle.
\ee
The inner product  above is the euclidean inner product on $\fmod$ coming from the Riemannian metric on $X$.
Since $\tb$ is in the unit sphere of $\fmod$, and since the diameter of a Weyl chamber for the spherical metric is at most $\pi/2$,\footnote{This follows from the fact that the action $W\acts \mathfrak{a}$ is {\em essential}, i.e., $W$ does not fix any proper  subspace of $\mathfrak{a}$.} we have,
\begin{equation}\label{eqn:dist_ineq}
0\le \df(x,y) \le \dr(x,y).
\end{equation}

\begin{rem}
 {The distance function $d_\tb$ depends on the choice of $\tb\in\tmod$.}
\end{rem}

The metric space $(X,\dr)$ is a complete Riemannian manifold and, in particular, it is {\em geodesic}: Any two points in $X$ can be connected by a geodesic segment. The (pseudo-)metric space $(X,\df)$ is also a geodesic space.
The geodesics in $(X,\df)$ are called {\em Finsler geodesics}. 
All the Riemannian geodesics are also Finsler, however, the converse is generally false: There are non-Riemannian Finsler geodesics when $\rank(X) \ge 2$.
The precise description of all Finsler geodesics is given in \cite[Subsec. 5.1.3]{MR3811766}.
We merely use this description as a definition of Finsler geodesics.

\begin{defn}[Finsler geodesics]\label{defn:fg}
Let $I\subset \mb{R}$. A path $\ell : I \ra X$ is called a {\em Finsler geodesic} if there exists a pair of antipodal flags $\t_\pm\in\Ft$ such that $\ell(I)\subset P(\t_+,\t_-)$ and
\[
\ell(t_2) \in V(\ell(t_1),\st(\t_+)),
\quad\forall t_1\le t_2.
\]
Moreover, given an $\i$-invariant compact subset $\Theta\subset \ost(\tmod)$, a Finsler geodesic $\ell : I \ra X$ is called a {\em $\Theta$-Finsler geodesic} if, in addition to the above, it satisfies the following stronger condition:
\[
\ell(t_2) \in V(\ell(t_1),\ost_\Theta(\t_+)),
\quad\forall t_1\le t_2.
\]
\end{defn}

\begin{rem}
Finsler geodesics give alternative description of diamonds, namely, the $\tmod$-diamond $\diamondsuit_\tmod(x,y)$ is the union of all Finsler geodesics connecting the endpoints $x$ and $y$. See \cite[Subsec. 5.1.3]{MR3811766}.
\end{rem}

\begin{notation}
In this paper, we use the notation $\rg{xy}$ to denote the Riemannian geodesic segment connecting a pair of points $x,y\in X$. To denote a Finsler geodesic segment connecting $x$ and $y$, we use the notation $\fg{xy}$. 
\end{notation}

Below we let $*$ be either ``$\mathrm{Riem}$'' or $\tb$. Let $\G< G$ be a  subgroup, and $x,x_0\in X$. Define the {\em orbital counting function} $N_*(r,x,x_0):[0,\infty)\ra[0,\infty]$,
\[
N_*(r) = N_*(r,x,x_0) = \card \{\g\in \G \mid d_*(x,\g x_0)< r\}.
\]
Using $N_*(r)$, following \cite{MR1675889} and \cite{MR1935549}, we define the {\em critical exponent} $\d_*$ of $\G$ by
\be\label{def:CE}
\d_* = \limsup_{r\ra\infty} \frac{\log N_*(r)}{r}\in [0,\infty].
\ee
The critical exponents $\dF$ and $\dR$ will be called the {\em $\tb$-critical exponent} and {\em Riemannian critical exponent}, respectively. 

\begin{rem}\label{rem:remtwopointfour}
The discussion in \cite{MR1675889} and \cite{MR1935549} is mostly limited to the case when $\tb$ is regular, i.e., belongs to the interior of $\smod$. 
\end{rem}

The critical exponent is independent  of the chosen points $x$ and $x_0$. The proof is standard: Consider the Poincar\'e series
\begin{equation}\label{eqn:poincare}
g_s^*(x,x_0) = \sum_{\g\in \G} \exp({-sd_*(x,\g x_0)}).
\end{equation}
It is a well-known  fact that $g_s^*(x,x_0)$ converges if $s> \d_*(x,x_0)$ and diverges if $s< \d_*(x,x_0)$ where $\d_*(x,x_0)$ denotes the right side of (\ref{def:CE}). Using the triangle inequality, we obtain
\[
\exp\LR{-sd_*(x,x_0)} g_s^*(x_0,x_0) \le g_s^*(x,x_0) \le \exp\LR{sd_*(x,x_0)} g_s^*(x_0,x_0).
\]
Hence, convergence or divergence of $g_s^*(x,x_0)$ is independent of the choice of $x$ and so is $\d_*(x,x_0)$. For a similar reason, it is also independent of the choice of $x_0$.

\begin{defn}\label{def:convergencetype}
A discrete subgroup $\G$ of $G$ is of {\em $\tb$-convergence type} if the $\tb$-Poincar\'e series $g_s^{\tb}(x,x_0)$ converges at the critical exponent $\dF$. Otherwise, we say that $\G$ has {\em $\tb$-divergence type}.
\end{defn}

Since the action $\G\acts X$ is properly discontinuous, $\dR$ is bounded above by the {\em volume entropy} of $X$ which is finite.\footnote{Finiteness of the volume entropy of a symmetric space follows, for instance, from the fact that $X$ has curvature bounded below combined with the Bishop--G\"unter volume comparison theorem, see e.g. \cite[Sec. 11.10, Cor. 4]{BC}. 
 }
For the $\tb$-critical exponent, (\ref{eqn:dist_ineq}) implies the following lower bound,
\begin{equation}\label{eq:CEcomparison}
\dR \le \dF.
\end{equation}
Finiteness of $\dF$ is more subtle because, in general, $\df$ is only a pseudo-metric and therefore, the orbital counting function $N_\tb$ may take infinity as a value. 
However, if the angular radius of the model Weyl chamber $\smod$ with respect to $\tb$ is $<\pi/2$, then $\df$ is a metric equivalent to $\dr$ and, consequently, $\dF$ is finite in this case. In particular, when $G$ is simple, then diameter of $\smod$ is $<\pi/2$ and therefore, $\dF$ is finite. 

The following finiteness result holds in the general pseudo-metric case.

\begin{prop}\label{prop:finiteCE}
For a uniformly $\tmod$-regular subgroup $\G< G$, the $\tb$-critical exponent $\dF$ is finite. 
\end{prop}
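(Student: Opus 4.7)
The plan is to control the Finsler counting function by the Riemannian one via a linear comparison $\df(x, \g x) \gtrsim \dr(x, \g x)$ along the orbit. Since $\dr \ge \df$ is automatic, and since $\dR$ is bounded above by the (finite) volume entropy of $X$, establishing such a reverse comparison will give $\dF \le \dR/c < \infty$ for some $c > 0$. Note that the reverse comparison cannot hold globally on $X \times X$ when the angular radius of $\smod$ at $\tb$ is at least $\pi/2$; uniform $\tmod$-regularity is what forces it along the orbit.

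First I would upgrade the asymptotic definition of uniform regularity to a uniform statement on the orbit. Discreteness of $\G$ implies $\dr(x, \g_n x) \to \infty$ for any sequence of distinct $\g_n \in \G$, so the $\liminf > 0$ hypothesis together with a contradiction argument yields $\e > 0$ and $R_0 > 0$ such that the normalized Cartan projection
\[
\hat v_\g \;:=\; d_\D(x, \g x) / \dr(x, \g x)
\]
lies in the compact set $\Theta := \{\xi \in \smod : d(\xi, \dst(\tmod)) \ge \e\} \subset \ost(\tmod)$ whenever $\dr(x, \g x) > R_0$. By discreteness again, only finitely many $\g \in \G$ fail this bound.

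The crux is then the pointwise positivity $\langle \xi, \tb \rangle > 0$ for every $\xi \in \ost(\tmod)$. Write $\xi$ in barycentric coordinates with respect to the vertices of $\smod$ viewed as a spherical simplex in $\fmod$; the definition of $\ost(\tmod)$ forces each vertex of $\tmod$ to appear with a strictly positive coefficient in this expansion, and so does each vertex of $\tmod$ in the expansion of $\tb$ itself. Expanding the inner product $\langle \xi, \tb \rangle$ and using the standard fact that the pairwise inner products of the vertices of $\smod$ are non-negative (these vertices are proportional to fundamental coweights of a semisimple root system, whose Gram matrix has non-negative entries), every cross term is non-negative and the diagonal contributions indexed by vertices of $\tmod$ are strictly positive. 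Compactness of $\Theta$ then upgrades pointwise positivity to a uniform lower bound $\langle \xi, \tb \rangle \ge c > 0$ on $\Theta$, so $\df(x, \g x) \ge c\, \dr(x, \g x)$ for all but finitely many $\g \in \G$.

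Combining with the definitions gives
\[
N_\mathrm{F}(r) \;\le\; N_\mathrm{R}(r/c) + \const,
\]
and taking exponential growth rates yields $\dF \le \dR/c < \infty$. The main technical obstacle is the positivity lemma for $\langle \cdot, \tb \rangle$ on $\ost(\tmod)$; this is precisely the step where the choice of $\tb$ in the \emph{interior} of $\tmod$ (rather than on its boundary) is essential, and where the spherical Coxeter structure of the Weyl chamber enters.
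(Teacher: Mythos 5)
Your proof is correct and uses the same strategy as the paper: deduce $\dF \le \dR/c < \infty$ from a linear comparison $\df(x, \g x) \ge c\, \dr(x,\g x)$ along the orbit, which is exactly the content of the coarse equivalence~(\ref{eqn:metricRF}) that the paper's proof asserts without further justification. The detail you supply---uniform $\tmod$-regularity confines the normalized Cartan projections $d_\D(x,\g x)/\dr(x,\g x)$ to a compact $\Theta\subset\ost(\tmod)$, and $\langle\cdot,\tb\rangle$ is strictly positive on $\ost(\tmod)$ because the Gram matrix of the vertices of $\smod$ has non-negative entries while both $\tb$ and any $\xi\in\ost(\tmod)$ carry strictly positive coefficients on the vertices of $\tmod$---is precisely the correct justification for that asserted inequality.
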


\begin{proof}
When $\G$ is uniformly $\tmod$-regular, the distance functions $\dr$ and $\df$ restricted to an orbit $\G x$ are coarsely equivalent: There exist $L\ge1 ,A\ge0$ such that, for all $x_1,x_2\in \G x$,
\be\label{eqn:metricRF}
L^{-1}\dr(x_1,x_2) -A \le \df(x_1,x_2) \le \dr(x_1,x_2).
\ee
The right side of this inequality comes from (\ref{eqn:dist_ineq}).
From this we get $\dR \le \dF \le L\dR$. Since $\dR$ is finite, $\dF$ is also finite.
\end{proof}

\begin{rem}\label{rem:positiveCE}\leavevmode
\begin{enumerate}
\item It is clear from the proof of  Proposition \ref{prop:finiteCE} that when $\G$ is uniformly $\tmod$-regular, then $\dF$ is positive if and only if $\dR$ is positive.

\item As Anosov subgroups are uniformly regular (see Theorem \ref{thm:equiv}), the above proposition applies to the class of Anosov subgroups.
\end{enumerate}
\end{rem}

Before closing this section, we compute  Finsler distances $\df$ in two examples.

\begin{exmp}[Product of rank-one symmetric spaces]\label{ex:1.2}
We continue with the discussion from Example \ref{ex:1}. 
Let $\tmod = (r_1,\dots, r_p)$ be a face of the model chamber, let $\tb = (1/\sqrt{p}, \dots, 1/\sqrt{p})$ be its barycenter, and let $\df$ be the corresponding metric on $X$. 
Using the formula for $d_\Delta$ from (\ref{eqn:DeltaValuedRankOne}), we get
 \be\label{eqn:dfrank1}
 \df(x,y) = \frac{1}{\sqrt{p}}\sum_{j=1}^{p}{d_{r_j}(x_{r_j},y_{r_j})}.
 \ee
\end{exmp}

\begin{exmp}[$X = \SL{k+1}/\SO{k+1}$]\label{ex:2.1}
We continue with the discussion from Example \ref{ex:2}. 
The Riemannian metric
on $X$ is given by the restriction of the Killing form $B$ of $\mf{g} = \mf{sl}(k+1,\R)$ to $\mf{p}$,
\be\label{eqn:SLKilling}
B(P,Q) = 2(k+1) \tr(PQ),\quad P,Q\in\mf{g}.
\ee
Note that the inner product $B$  on $\a$ (which we identify with $\fmod$) can be written as
\be\label{eqn:SLmetric}
 \norm{(\sigma_1,\dots,\sigma_{k+1})|(\sigma'_1,\dots,\sigma'_{k+1})}=2(k+1) \sum_{i=1}^{k+1}\sigma_i \sigma'_i.
\ee

Let $\tmod=(r_1,\dots,r_p)$ be an $\i$-invariant face of the model chamber $\smod$ and let $\D_\tmod$ be the corresponding face of the model euclidean Weyl chamber $\D$,
\[
\D_\tmod = \big\{ \pmb{\sigma}\in\a_+ 
\mid  \pmb{\sigma} = ( \underbrace{\sigma_1,\dots,\sigma_1}_{r_1 \text{-times}},\dots,\underbrace{\sigma_i,\dots,\sigma_i}_{(r_{i} - r_{i-1})\text{-times}},\dots,\underbrace{\sigma_{p+1}\dots, \sigma_{p+1}}_{(k+1-r_p) \text{-times}}) \big\}.
\]
For notational convenience we denote $\pmb{\sigma}$ in the above expression simply by the $(p+1)$-vector $(\sigma_1,\dots,\sigma_{p+1})$ (by identifying the repeated entries). With this convention, the opposition involution acts by \[\i(\sigma_1,\dots,\sigma_{p+1}) = (-\sigma_{p+1},\dots,-\sigma_{1}).\]
We identify $\tmod$ with the unit sphere (w.r.t. the metric in (\ref{eqn:SLmetric})) in $\D_\tmod$ centered at the origin, i.e., $\tmod$ consists of all elements $(\sigma_1,\dots,\sigma_{p+1}) \in \D_\tmod$ satisfying
$2(k+1)\sum_{i=1}^{p+1} (r_i - r_{i-1}) \sigma_i^2 = 1$.
An element $\tb = (\sigma_1,\dots,\sigma_{p+1})\in\tmod$ lies in the interior of $\tmod$ if and only if $\sigma_1>\dots>\sigma_{p+1}$. Moreover, $\tb$ is $\i$-invariant if and only if $\sigma_{i} + \sigma_{p+2-i} = 0$ for all $i= 1,\dots,p+1$.

The Finsler distance $\df$ can be calculated explicitly in terms of the above formulas.
In the special case\footnote{We will only focus on this case from now on.} when $\tmod = (1,k)$ and 
\[
\tb = (1/2\sqrt{k+1}, 0 , -1/2\sqrt{k+1}),
\]
the unique $\iota$-invariant type in the interior of $\tmod$, we have a simple formula for the Finsler distance: For all $g\in \SL{k+1}$, 
\be\label{eq:ex:2.1}
\df(x_0,g x_0) = \sqrt{k+1} \LR{\sigma_1 (g) - \sigma_{k+1}(g)},
\ee
where the point $x_0$ is the identity matrix.
\end{exmp}

\section{$\tb$-conformal densities}\label{sec:CD}

Recall that Busemann functions define the notion of ``distance from infinity.'' For $\t\in\Ft$, let $b^\tb_\t: X\ra \R$ denote the Busemann function based at the ideal point $\tb(\t) \in \vb X$ normalized at $x_0$, i.e., $b^\tb_\t(x_0) = 0$. Using Busemann functions, one defines the {\em Busemann cocycle} as
\be\label{eqn:horodist}
\dhor_\t(x,y) = b^\tb_\t(x) - b^\tb_\t(y).
\ee
$\dhor_\t$ satisfies the {\em cocycle condition:} For each triple $x,y,z\in X$,
\be\label{eqn:cocycle}
\dhor_\t(x,y) + \dhor_\t(y,z) = \dhor_\t(x,z).
\ee
 These functions are related to the Finsler distance functions by
\be\label{eq:B-limit}
\dhor_\t(x,y)= \lim_{n\to \infty} \left( \df(x, z_n) - \df(y, z_n)\right) 
\ee
whenever $(z_n)$ is a sequence in $X$ flag-converging to $\tau$, cf. \cite[Prop. 5.43]{MR3811766}. 
Note that 
\[
-\df(x,y) \le \dhor_\t (x,y) \le \df(x,y).
\]

\begin{rem}
 As usual, the Busemann functions and cocycles depends on the choice of $\tb$. Also, note that these functions can take negative values. However, $|\dhor_\t(x,y)|$ satisfy the triangle inequality and, hence, are pseudo-metrics on $X$.
\end{rem}

We define our notion of ``{conformal densities}'' on $\Ft$ using these Busemann cocycles.

For a topological  space $S$, we let $M_+(S)$ denote the set of Borel probability measures on $S$. Recall that a group $H$ of 
self-homeomorphisms of $S$  acts on $M_+(S)$ by {\em pull-back}: For every $B\in\B(S)$, $h\in H$,
\[
\mu\mapsto h^*\mu, \quad h^*\mu(B)=\mu(h^{-1}(B)). 
\]

\begin{defn}[$\tb$-conformal density]\label{def:conformal_density}
 Let $\G< G$ be a discrete 
subgroup and let $A\subset X$ be a nonempty $\G$-invariant subset. By a {\em $\G$-invariant $\tb$-conformal $A$-density} $\mu$ of {\em dimension $\beta\in[0,\infty)$}  on $\Ft$, we mean a $\G$-equivariant map
\[
\mu : A \ra M_+(\Ft), \quad a\mapsto \mu_a,
\]
 satisfying the following properties:
\begin{enumerate}[(i)]
\setlength\itemsep{0em}

\item For each $a\in A$, $\supp(\mu_a)\subset \L_\tmod(\G)$. \label{def:CDitemi}

\item {\em $\G$-invariance:} $\mu$ is $\G$-invariant, i.e., $\g^*\mu_a = \mu_{\g a}$ for each $\g\in \G$ and each $a\in A$. \label{def:CDitemii}

\item {\em $\tb$-conformality:} For every pair $a,b \in A$, $\mu_a \ll \mu_b$, i.e., $\mu_a$ is absolutely continuous with respect to $\mu_b$, and the Radon--Nikodym derivative $d\mu_a/d\mu_{a}$ can be expressed as \label{def:CDitemiii}

\begin{equation}\label{eqn:RN}
\frac{d\mu_a}{d\mu_{b}}(\t) = \exp\left({-\beta \dhor_{\t}(a,b)}\right),
\quad \forall\t\in\Ft.
\end{equation}

\end{enumerate}

\end{defn}

\begin{rem}\leavevmode

\begin{enumerate}
 \item In the literature, the item (\ref{def:CDitemi}) in the above definition is not required to define $\G$-invariant conformal densities.
 Nevertheless, the uniqueness of such conformal densities  for $\tmod$-Anosov subgroups, which we establish later in the paper (see Corollary \ref{cor:uniqueAnosovDensity}), is {\em possibly} false unless we impose this extra condition.
 
 \item Though we define $\tb$-conformal densities for general discrete subgroups of $G$, for the purpose of this paper we restrict our discussion only to $\tmod$-regular subgroups.
\end{enumerate}

\end{rem}

A $\tb$-conformal $X$-density is simply called a {\em $\tb$-conformal density}. Note that $\tb$-conformal $X$-densities and $\tb$-conformal $A$-densities are in a one-to-one correspondence: 
\be\label{eqn:Adensity}
\{\text{$\tb$-conformal $X$-densities}\} \longleftrightarrow \{\text{$\tb$-conformal $A$-densities}\}.
\ee
From a $X$-density $\mu$, define an $A$-density by restricting the family. On the other hand, given an $A$-density $\mu$, extend it to a $X$-density $\{\mu_x\}_{x\in X}$ by 
\[
d\mu_x(B) = \int_B \exp\left({-\beta \dhor_{\t}(x,a)}\right) d\mu_{a}(\t), \quad B\in\B(\Ft)
\]
where $\mu_a$ is a density in the family $\mu$. 
Note that this extension is unique because $\mu_x$ and $\mu_a$ are absolutely continuous with respect to each other.
To check $\G$-invariance, note that
\begin{align*}
\g^*\mu_x(B) 
&= \int_{\g^{-1}B} \frac{d\mu_x}{d\mu_a}(\t)d\mu_a(\t)
= \int_{B} \exp\left({-\beta \dhor_{\g^{-1}\t}(x,a)}\right) d\mu_a(\g^{-1}\t)\\
&=  \int_{B} \exp\left({-\beta \dhor_{\t}(\g x,\g a)}\right) d\mu_{\g a}(\t)
=  \int_{B} \frac{d\mu_{\g x}}{d\mu_{\g a}}(\t)d\mu_{\g a}(\t) = \mu_{\g x}(B),
\end{align*}
for every $B \in \B(\Ft)$. The other two defining properties are also satisfied.

\subsection{The Patterson-Sullivan construction}
For every $\tmod$-regular group $\G$ with finite $\tb$-critical exponent $\dF$, we follow the the Patterson--Sullivan construction to construct a $\tb$-conformal density. This construction  is standard and already appeared in the work of Albuquerque and Quint, although only in the setting of Zariski dense subgroups $\G< G$ and regular vectors $\tb$; we present it here for the sake of completeness. We let $\G< G$ be a $\tmod$-regular subgroup and let 
$Z$ denote the $\G$-orbit of a point $x_0\in X$. The union 
\[
\bar Z= Z\cup \LT \subset \Xt,
\]
equipped with the topology of flag-convergence, is a compactification of $Z$. 

For $s> \dF$, the $\tb$-critical exponent of $\G$, we define a family of positive measures  $\mu^\tb_s = \{\mu^\tb_{x,s}\}_{x\in X}$ on $\bar Z$ by 
\begin{equation}\label{eqn:PSM}
\mu^\tb_{x,s} =  \frac{1}{ g^{\tb}_s(x_0,x_0)}\sum_{\g\in \G} \exp\LR{-s\df(x, \g x_0)}D(\g x_0),
\end{equation}
where $D(\g x_0)$ denotes the Dirac point mass of weight one at $\g x_0$. Note that $\mu^\tb_{x,s}$ is a probability measure when $x\in Z$. 
Also, note that $\Lt(\G)$ is a null set for these measures. For $\g\in \G$ a straightforward computation shows that
\begin{equation}\label{eqn:equivariance1}
\g^*\mu^\tb_{x,s} = \mu^\tb_{\g x,s}.
\end{equation}
Moreover, it is easy to see that $\mu^\tb_{s}$ is an absolutely continuous family of measures. 
Using (\ref{eqn:PSM}) we compute the Radon--Nikodym derivatives $d\mu^\tb_{x,s}/d\mu^\tb_{x_0,s}$,
\be\label{eqn:PSder}
\psi^\tb_{x,x_0,s}(z) = \frac{d\mu^\tb_{x,s}}{d\mu^\tb_{x_0,s}}(z),
\ee
where for $s\ge 0$,
\[
\psi^\tb_{x,x_0,s}(z) := \exp\LR{-s\LR{\df(z,x) - \df(z,x_0)}}.
\]
The formula for $\psi^\tb_{x,x_0,s}$ above only makes sense when $z\in Z$.
Since $\LT$ is a null set, we extend $\psi^\tb_{x,x_0,s}$ continuously to $\Lt(\G)$ by setting
\[
\psi^\tb_{x,x_0,s}(\t) = \exp\LR{-s\dhor_\t(x,x_0)}.
\]
The continuity of this function can be verified using  properties of $\df$ (e.g., see \cite[Sec. 5.1.2]{MR3811766} and \eqref{eq:B-limit}). 

Next we prove that $\psi^\tb_{x,x_0,s} \ra \psi^\tb_{x,x_0,\dF}$ uniformly as $s\ra \dF$. 
For $S\ge s, s'>\dF$  and $z\in Z$,
\begin{align*}
|\psi^\tb_{x,x_0,s'}&(z) - \psi^\tb_{x,x_0,s}(z)| \\
&= | \exp\LR{-s'\LR{\df(z,x) - \df(z,x_0)}} - \exp\LR{-s\LR{\df(z,x) - \df(z,x_0)}} |\\
&= \exp\LR{-s\LR{\df(z,x) - \df(z,x_0)}} | \exp\LR{(s-s')\LR{\df(z,x) - \df(z,x_0)}} -1 |\\
&\le \exp\LR{S \dr(x,x_0)}| \exp\LR{(s-s')\LR{\df(z,x) - \df(z,x_0)}} -1 |.
\end{align*}
Switching $s$ and $s'$ in the above, we also get
\begin{align*}
|\psi^\tb_{x,x_0,s'}&(z) - \psi^\tb_{x,x_0,s}(z)|\\
&\le \exp\LR{S \dr(x,x_0)}| \exp\LR{(s'-s)\LR{\df(z,x) - \df(z,x_0)}} -1 |.
\end{align*}
Combining the above two inequalities, we get
\begin{align*}
|\psi^\tb_{x,x_0,s'}&(z) - \psi^\tb_{x,x_0,s}(z)|\\
&\le \exp\LR{S \dr(x,x_0)} \LR{\exp\LR{|s'-s|\cdot|\df(z,x) - \df(z,x_0)|} -1}\\
&\le \exp\LR{S \dr(x,x_0)} \LR{\exp\LR{|s'-s|\dr(x,x_0)} -1}.
\end{align*}
Since $Z$ is dense in $\bar{Z}$, the above yields
\[
\| \psi^\tb_{x,x_0,s'} - \psi^\tb_{x,x_0,s} \|_\infty \le  \exp\LR{S \dr(x,x_0)} \LR{\exp\LR{|s'-s|\dr(x,x_0)} -1}
\]
Therefore, $\psi^\tb_{x,x_0,s} \ra \psi^\tb_{x,x_0,\dF}$ uniformly as $s\ra \dF$. 

Now we construct a $\tb$-conformal density as a limit of the family of densities $\{\mu^\tb_s\}_{s>\dF}$.
We first assume that $\G$ has $\tb$-divergence type.\footnote{This will be the case for Anosov subgroups. See Corollary \ref{cor:div}.} Then, as $s$ decreases to $\dF$, the family $\mu^\tb_s = \{\mu^\tb_{x,s}\}_{x\in X}$ weakly accumulates to a density $\mu^\tb$ supported on some subset of $\Lt(\G)$. 
By (\ref{eqn:equivariance1}) we have the $\G$-invariance of $\mu^\tb$, namely, for $\g\in\G$,
\begin{equation}\label{eqn:invariance}
\g^*\mu^\tb_{x} = \mu^\tb_{\g x}.
\end{equation}
Moreover, since $\mu^\tb_x$ is obtained as a weak limit of the measures $\mu^\tb_{x,s}$ and the derivatives $\psi^\tb_{x,x_0,s} =   d\mu^\tb_{x,s}/d\mu^\tb_{x_0,s}$ converge uniformly to $\psi^\tb_{x,x_0,\dF}$, it follows that the Radon-Nikodym derivative $d\mu^\tb_{x}/d\mu^\tb_{x_0}$ exists and equals to the limit
\[
\lim_{s\ra \dF}\frac{d\mu^\tb_{x,s}}{d\mu^\tb_{x_0,s}} = \psi^\tb_{x,x_0,\dF},
\]
or more explicitly,
\be\label{eqn:derivative}
\frac{d\mu^\tb_{x}}{d\mu^\tb_{x_0}}(\t) = \exp\LR{- \dF \dhor_\t(x,x_0)}.
\ee
Note that in general weak limits are not unique.
In Corollary \ref{cor:uniqueAnosovDensity} we will prove 
that for Anosov subgroups $\Gamma$ we get a unique density in this limiting process.

When $\G$ has $\tb$-convergence type, we change weights of the Dirac masses by a small amount (\cite[Lem. 3.1]{MR0450547}, see also \cite[Sec. 3.1]{nicholls1989ergodic}) in the definition (\ref{eqn:PSM}) to force the Poincar\'e series to diverge. Define
\[
\mu^\tb_{x,s} =  \frac{1}{ {\bar g}^{\tb}_s(x_0,x_0)}\sum_{\g\in \G} \exp\LR{-s\df(x,\g x_0)}h\LR{\df(x,\g x_0)} D(\g x_0)
\]
where $h: \R_+ \ra \R_+$ is a subexponential function such that the following modified Poincar\'e series 
\[
{\bar g}^\tb_{s}(x,x_0) = \sum_{\g\in \G} \exp\LR{-s\df( x,\g x_0)} h\LR{\df(\g x,x_0)}
\]
diverges at $s = \dF$. In this case also, a limit density $\mu^\tb$ has the properties (\ref{eqn:invariance}) and (\ref{eqn:derivative}).

\begin{defn}[Patterson-Sullivan density]\label{definition:PSdensity}
 Let $\G$ be a $\tmod$-regular subgroup of $G$ such that $\dF(\G) <0$, and let $\tb\in\tmod$ be an $\iota$-invariant interior point.
 Any weak limit $\mu^\tb$ appears from the construction above is called a {\em Patterson-Sullivan density} of {\em type $\tb$}.
\end{defn}

\subsection{Positivity of the $\tb$-critical exponent}
The existence of a $\tb$-conformal density implies that the $\tb$-critical exponent of $\G$ is positive.

\begin{prop}\label{cor:positiveCE}
Suppose that $\G$ is a nonelementary $\tmod$-regular antipodal subgroup and $\dF$ is finite.
Then, $\dF$ is also positive. 
\end{prop}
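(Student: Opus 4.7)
The plan is to argue by contradiction: assuming $\dF = 0$, I will construct a nonzero finite $\G$-invariant Borel measure on $\LT$ via the Patterson--Sullivan construction of this section, then rule this out using the dynamics of the convergence action $\G \acts \LT$.

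First, I would invoke the Patterson--Sullivan construction already set up in this section, now applied at the critical exponent $\dF = 0$, to produce a nonzero conformal density $\mu = \{\mu_x\}_{x \in X}$ of dimension $0$ supported on $\LT$. If $\G$ is of divergence type at $\dF = 0$, this is a weak-$\ast$ accumulation point of the probability measures $\mu_{x_0, s}$ defined in \eqref{eqn:PSM} as $s \searrow 0$; if $\G$ is of convergence type, the modified-weights construction with a subexponential function $h$ (as described above) applies instead. In either case $\mu_{x_0, s}$ is a probability measure on the compact space $\bar Z = Z \sqcup \LT$ (compact because $\G$ is $\tmod$-regular), so any weak-$\ast$ accumulation point is a probability measure, and divergence of the (possibly modified) Poincar\'e series at the critical exponent forces its support into $\LT$.

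Next, I would exploit the vanishing of the dimension to extract a $\G$-invariant measure. With $\dF = 0$, the Radon--Nikodym formula \eqref{eqn:derivative} collapses to $d\mu_x/d\mu_{x_0} \equiv 1$, so the family is constant: $\mu_x = \mu_{x_0} =: \nu$ for every $x \in X$. The $\G$-equivariance \eqref{eqn:invariance} then reads $\g^{\ast} \nu = \nu$ for every $\g \in \G$, making $\nu$ a nonzero finite $\G$-invariant Borel measure on $\LT$.

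Finally, I would derive a contradiction from the dynamics of the nonelementary convergence action $\G \acts \LT$ (which is nonelementary and convergence by Subsection \ref{sec:discretegroups}, since $\G$ is nonelementary $\tmod$-RA). The standard convergence-group argument rules out any finite $\G$-invariant measure on $\LT$: since $\nu$ has at most countably many atoms while loxodromic fixed-point pairs of $\G$ are dense in $\LT \times \LT$, I can select a loxodromic $\g \in \G$ whose repelling fixed point $\g^-$ is not an atom of $\nu$; convergence dynamics then give $\g^n_\ast \nu \rightharpoonup \nu(\LT)\, \delta_{\g^+}$, while $\G$-invariance forces $\g^n_\ast \nu = \nu$, so $\nu = \nu(\LT)\, \delta_{\g^+}$. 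A second loxodromic element whose attracting fixed point differs from $\g^+$ (available by nonelementarity) produces the contradiction. The delicate point is step one: ensuring that the Patterson--Sullivan construction still produces a genuinely nonzero density satisfying the full conformality \eqref{eqn:derivative} at the degenerate value $\dF = 0$, particularly in the convergence-type case where the modified weights must be used and one must verify that the conformal transformation rule survives passage to the weak limit.
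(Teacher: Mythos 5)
Your proof is correct and follows the same overall strategy as the paper: assume $\dF = 0$, run the Patterson--Sullivan construction to get a nonzero finite $\G$-invariant Borel measure on $\LT$ (since the conformal cocycle becomes trivial at dimension $0$), then derive a contradiction from the convergence dynamics of $\G \acts \LT$. The difference lies in the final dynamical contradiction. The paper works with a single sequence $(\g_n)$ with $\g_n^{\pm 1} \ra \t_\pm$, picks a contraction sequence $(U_n)$ (in the sense of Kapovich--Leeb--Porti), first proves $\nu$ is atom-free using minimality plus invariance, and then pushes a positive-mass compact set under $\g_n$ to show $\t_+$ would have to be an atom. You instead invoke the density of loxodromic fixed-point pairs in $\LT \times \LT$, choose a loxodromic $\g$ whose repelling point is not among the (at most countably many) atoms, and observe that north--south dynamics collapse $\nu$ to a multiple of $\delta_{\g^+}$, contradicting invariance and nonelementarity. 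Both arguments rest on the convergence-group structure established in \cite{Kapovich:2014aa,MR3736790}; yours leans on the standard convergence-group fact about dense loxodromic fixed-point pairs, while the paper's contraction-sequence argument is more self-contained in the KLP formalism and avoids singling out loxodromics. One small remark: when $\dF = 0$ the Poincar\'e series at $s = 0$ is $\sum_{\g \in \G} 1 = \infty$ (as $\G$ is infinite, being nonelementary), so the divergence-type construction always applies here and the convergence-type modification you flag as the ``delicate point'' never actually enters. Finally, in your last step it is cleaner to observe that $\nu = \nu(\LT)\,\delta_{\g^+}$ together with $\G$-invariance forces $\G$ to fix $\g^+$, which contradicts minimality of $\G \acts \LT$ directly (avoiding the need to worry about whether your second loxodromic has $\g^+$ as its repelling point).
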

\begin{proof}
Suppose to the contrary that $\dF = 0$. Let $\mu^\tb$ be a Patterson--Sullivan density constructed above. It follows from the $\G$-invariance and $\tb$-conformality that for all $\g\in\G$,
\be\label{eqn:zeroCrit}
\mu^\tb_x(\g A) = \mu^\tb_{\g^{-1} x}(A) = \mu^\tb_x(A),
\quad\forall A\in\B(\LT).
\ee

We use the convergence action property of a $\tmod$-RA subgroups (see Subsection \ref{sec:discretegroups}). 

We first show that $\mu^\tb$ is atom-free. For if $\t\in\LT$ were an atom, then, since $\G\t$ is an infinite orbit, $\LT$ would have infinite $\mu^\tb_x$-mass by (\ref{eqn:zeroCrit}).

Moreover, using the converge action $\G\acts\LT$, we have an infinite sequence $\g_n\in\G$ and points $\t_\pm\in\LT$ such that on $\LT - \{\t_-\}$,
\[
 \g_n|_{\LT - \{\t_-\}} \ra \mathrm{const}_{\t_+} 
\]
uniformly on compact sets.

Now, pick a compact set $A\subset \Lt(\G)$ not containing $\t_\pm$ such that 
\[
\mu^\tb_x(A) \ge (2/3)\mu^\tb_x(\LT)
\]
(this is possible because $\mu^\tb$ is atom-free). 
Pick a large enough $n$ so that $\g_n(A) \cap A = \emptyset$. Then,
$$\mu^\tb_x(\LT) \ge \mu^\tb_x(A\cup\g_{n}A ) = \mu^\tb_x(A) + \mu^\tb_x(\g_{n}A) =2\mu^\tb_x(A) \ge \frac{4}{3}\mu^\tb_x(\LT),$$
yields a contradiction.
\end{proof}

\begin{rem}
As a corollary to the above proposition, the Riemannian critical exponent $\dR$ of a nonelementary uniformly $\tmod$-regular  antipodal subgroup is also non-zero. See the remark after Proposition \ref{prop:finiteCE}.
\end{rem}

Since Anosov subgroups are uniformly regular and antipodal, we have the following result.

\begin{cor}
 Let $\G$ be a nonelementary $\tmod$-Anosov subgroup of $G$.
 Then,
 \[
  0< \dR \le \dF <\infty.
 \]
\end{cor}

\section{Hyperbolicity of Morse image}\label{sec:hypmorse}

In this section we prove that the image of a $\tmod$-Morse map  is Gromov-hyperbolic with respect to the Finsler pseudo-metric $\df$. As a corollary, we prove that each orbit of an Anosov subgroup is also Gromov-hyperbolic with respect to $\df$.

We first recall two notions of hyperbolicity. We shall use the symbol $\delta$ to denote the hyperbolicity constant in this and the next sections only. We hope that the reader will not confuse this $\delta$ with the critical exponent.

\begin{defn}[Rips hyperbolic]
Let $(Z,d)$ be a geodesic metric space. Then, $(Z,d)$ is called {\em $\delta(\ge 0)$-hyperbolic in the sense of Rips} (or {\em Rips hyperbolic}) if every geodesic triangle $\triangle$ is $\d$-thin, i.e., each side of $\triangle$ lies in the $\delta$-neighborhood of the union of the other two sides.
\end{defn}

\begin{defn}[Gromov hyperbolic]\label{def:GH}
Let $(Z,d)$ be a metric space. For any three points $z,z_1,z_2\in Z$, the {\em Gromov product} is defined as
\[
\norm{z_1| z_2}_z = \frac{1}{2}[d(z,z_1) + d(z_2,z) - d(z_1,z_2)].
\]
Then $(Z,d)$ is called {\em $\d(\ge 0)$-hyperbolic in the sense of Gromov} (or {\em Gromov hyperbolic}) if the Gromov product satisfies the following {\em ultrametric inequality}: For all $z,z_1,z_2,z_3\in Z$,
\[
\norm{z_1|z_2}_z \ge \min\{\norm{z_1|z_3}_z, \norm{z_2|z_3}_z\} - \delta.
\]
\end{defn}

It should be noted that  Gromov's definition applies to all metric spaces whereas Rips' definition works only for geodesic metric spaces. Moreover, Gromov hyperbolicity is not quasiisometric invariant whereas Rips hyperbolicity is (as a consequence of Morse lemma, cf. \cite[Cor. 11.43]{MR3753580})). For geodesic metric spaces,  these two notions of hyperbolicity are equivalent (e.g., see \cite[Lemma 11.27]{MR3753580}). 

Let $(Z',d')$ be Rips hyperbolic and $f:(Z',d')\ra (X,\dr)$ be a $\tmod$-Morse map. We denote the image $f(Z')$ by $Z$. Recall that the $\df$ is coarsely equivalent to   $\dr$ on $Z$.\footnote{This is also true for any finite Riemannian tubular neighborhood of $Z$.} Therefore, since $f$ is a quasiisometric embedding with respect to $\dr$, it is also a quasiisometric embedding with respect to $\df$.
Moreover, the image of a geodesic (of length bounded below by a constant) in $Z'$ stays within a uniformly bounded Riemannian distance, say $\lambda_0\ge 0$, from a $\tmod$-regular Finsler geodesic connecting the images of the endpoints. This is a consequence of the Morse property \cite[Thm. 1.1]{kapovich2014morse}, see also \cite[{Prop. 12.2}]{MR3811766}.
A consequence of this is that $Z$ is $\lambda_0$-quasiconvex in $(X,\df)$. 

For $\lambda \ge \lambda_0$, let $Y= Y_\lambda$ be the Riemannian $\lambda$-neighborhood of $Z$ in $X$. From the discussion above, it is clear that any  two points $z_1,z_2\in Z$ (with $\dr(z_1,z_2)$ sufficiently large)  can be connected by a Finsler geodesic $\fg{z_1z_2}$ in $Y$.

\begin{prop}\label{prop:uniqueFinsler}
Let $c$ and $c'$ be two Finsler geodesics in $Y$ connecting two points $z_1,z_2$. Then they are uniformly Hausdorff close. Here the Hausdorff distance is induced by either of $\dr$ or $\df$.
\end{prop}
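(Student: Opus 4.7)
The plan is to compare both $c$ and $c'$ to a single distinguished curve in $Z$, and then use the Rips hyperbolicity of $(Z', d')$ transported via the Morse quasi-isometry $f$. Let $\alpha\subset Z$ denote the image under $f$ of the $(Z',d')$-geodesic joining $f^{-1}(z_1)$ and $f^{-1}(z_2)$; the Morse hypothesis makes $\alpha$ a $\T$-regular Finsler quasigeodesic from $z_1$ to $z_2$ that lies uniformly close to $\Diamond_\tmod(z_1,z_2)$. It suffices to show that every Finsler geodesic $c\subset Y$ from $z_1$ to $z_2$ is uniformly Hausdorff close to $\alpha$; the statement for the pair $(c,c')$ then follows by the triangle inequality.

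Fix such a $c$ and parametrize it by Finsler arclength, $c:[0,T]\to X$ with $\df(c(t),c(s))=|t-s|$. For each $t$, choose $\pi(c(t))\in Z$ with $\dr(c(t),\pi(c(t)))=\dr(c(t),Z)\le\lambda$, and set $\tilde c=\pi\circ c$. Combining the triangle inequality, the pointwise bound $\df(c(t),\pi(c(t)))\le\dr(c(t),\pi(c(t)))\le\lambda$, and the coarse equivalence of $\df$ and $\dr$ on the tube $Y$ (recalled just before this proposition), one obtains a two-sided linear estimate of the form
\[
|t-s| - 2\lambda \,\le\, \dr(\tilde c(t),\tilde c(s)) \,\le\, L|t-s| + B,
\]
with constants $L\ge 1$, $B\ge 0$ depending only on $\lambda$ and the Morse constants of $f$. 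Hence $\tilde c$ is a quasigeodesic in $(Z,\dr|_Z)$ with uniform constants.

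Both $\alpha$ and $\tilde c$ are now quasigeodesics in $Z$ joining the same endpoints $z_1, z_2$. Pulling back under the quasi-isometric Morse embedding $f$ yields quasigeodesics in the Rips-hyperbolic space $(Z',d')$ joining $f^{-1}(z_1)$ and $f^{-1}(z_2)$. The standard Morse lemma for Rips-hyperbolic spaces (e.g., \cite[Cor.~11.43]{MR3753580}) produces a uniform Hausdorff bound between them in $d'$; pushing forward by $f$ transfers this to a uniform Hausdorff bound between $\tilde c$ and $\alpha$ in $(X,\dr)$. Since $\dr(c(t),\tilde c(t))\le\lambda$ by construction, the same estimate holds between $c$ and $\alpha$; and because $\df\le\dr$ the conclusion carries over to the Finsler Hausdorff distance.

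The main technical obstacle is concentrated in the second paragraph: verifying that $\tilde c$ is genuinely a quasigeodesic with constants independent of $c$. This relies on the coarse equivalence of $\df$ and $\dr$ on the tube $Y$, which in turn rests on the uniform $\tmod$-regularity intrinsic to the Morse hypothesis (as in the proof of Proposition \ref{prop:finiteCE}). Once this quasigeodesic property is in hand, Rips hyperbolicity of $(Z',d')$ handles the rest.
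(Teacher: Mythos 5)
Your proof is correct and takes essentially the same route as the paper: project the Finsler geodesic(s) to $Z$ by nearest-point projection, pull the projections back through the coarse inverse of $f$ to uniform quasigeodesics in the Rips-hyperbolic space $(Z',d')$, apply the Morse lemma there, and push forward. The only cosmetic differences are that you compare each of $c,c'$ to a reference curve $\alpha$ instead of comparing $c$ to $c'$ directly, and that you spell out the quasigeodesic estimate for the projected curve, a step the paper leaves implicit.
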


\begin{proof}
Since $\dr$ or $\df$ are comparable on $Y$, it is enough to prove the proposition for the Riemannian metric $\dr$.

Let $\bar c$ and $\bar c'$ be the respective nearest point projections of $c$ and $c'$ to $Z$. Applying the coarse inverse of $f$, $\bar c$ and $\bar c'$ map to uniform quasigeodesics $\tilde c$ and $\tilde c'$, respectively, in $Z'$. Since $Z'$ is Rips hyperbolic, $\tilde c$ and $\tilde c'$ are uniformly close. Applying $f$ to $\tilde c$ and $\tilde c'$, we see that $\bar c$ and $\bar c'$ are uniformly close. Hence $c$ and $c'$ are also uniformly close.
\end{proof}

Next we observe that geodesic triangles in $(Y,\df)$ with vertices on $Z$ are uniformly thin.

\begin{prop}\label{prop:thintriangle}
There exists $\d\ge 0$ such that every Finsler geodesic triangle $\triangle$  in $Y$  is $\delta$-thin both in Riemannian and Finsler sense.
\end{prop}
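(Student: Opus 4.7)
The plan is to transfer thinness from the Rips hyperbolic space $Z'$ to the Finsler triangle in $Y$, using the Morse property of $f$ together with Proposition \ref{prop:uniqueFinsler} to absorb the non-uniqueness of Finsler geodesics. First, pick preimages $z'_i \in Z'$ with $f(z'_i) = z_i$, form a geodesic triangle $\triangle'$ in $Z'$ with sides $[z'_iz'_j]$, and invoke Rips hyperbolicity: $\triangle'$ is $\delta'$-thin for a constant $\delta'$ that depends only on $Z'$.

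Next, for each side of $\triangle'$, the Morse property produces a $\tmod$-regular Finsler geodesic $c_{ij} \subset Y$ from $z_i$ to $z_j$ that stays within Riemannian distance $\lambda_0$ of $f([z'_iz'_j])$. By Proposition \ref{prop:uniqueFinsler}, $c_{ij}$ is in turn uniformly Hausdorff close (say within some $D_1$) to the prescribed Finsler side $\fg{z_iz_j}$ of $\triangle$. Hence each side of $\triangle$ lies in a uniformly bounded Riemannian tubular neighborhood of $f([z'_iz'_j])$.

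Given $p \in \fg{z_1z_2}$, I chain the approximations: obtain $q \in f([z'_1z'_2])$ with $\dr(p,q) \le \lambda_0 + D_1$; pull back through the coarse inverse of $f$ (well defined on $Z$ since $f$ is a quasiisometric embedding) to get $q' \in [z'_1z'_2]$; apply the $\delta'$-thinness of $\triangle'$ to find $r' \in [z'_1z'_3] \cup [z'_2z'_3]$ with $d'(q',r') \le \delta'$; and finally push $r'$ forward by $f$ and apply the Morse comparison plus Proposition \ref{prop:uniqueFinsler} once more to reach a point of $\fg{z_1z_3} \cup \fg{z_2z_3}$ within a further uniform Riemannian distance of $p$. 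Summing additive errors yields the Riemannian thinness constant; the Finsler thinness follows immediately from the coarse equivalence of the two metrics on $Y$, cf.\ (\ref{eqn:metricRF}), possibly after enlarging $\delta$.

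The main obstacle is purely bookkeeping: the Morse comparison requires each side to be longer than some threshold $D$. Triangles in which some side has length $\le D$ are handled separately, by absorbing the bounded Riemannian diameter of such short sides directly into the final constant $\delta$ via (\ref{eqn:metricRF}). No genuinely geometric difficulty arises beyond carefully propagating uniform additive constants through the four-step chain above.
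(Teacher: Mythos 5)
Your proposal is correct and follows essentially the same route as the paper: both transfer thinness of triangles from the Rips-hyperbolic space $Z'$ to the Finsler triangle in $Y$ via the quasiisometry $f$, using the Morse property and Proposition \ref{prop:uniqueFinsler} to control the discrepancy between $f$-images of $Z'$-geodesics and the chosen Finsler sides. The only cosmetic difference is that the paper maps the Finsler triangle $\triangle$ back to $Z'$ and views it there as a uniform quasigeodesic triangle (invoking the Morse lemma for quasigeodesics in hyperbolic spaces), whereas you build a genuine geodesic triangle in $Z'$ from vertex preimages and propagate the estimates forward side-by-side; these are the same argument written in the two natural directions.
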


\begin{proof}
Since $Z'$ is Rips hyperbolic, geodesic triangles in $Z'$ are $\d'$-thin, for some $\d'\ge 0$. We map $\triangle$ to 
a uniformly quasigeodesic triangle  $\triangle' \subset Z'$ via the coarse inverse map $Y\to Z'$ of the map $f$. 
Since $Z'$ is Rips-hyperbolic, the Morse quasigeodesic triangle $\triangle'$ is uniformly thin. Therefore, 
 $\triangle$ is also uniformly thin as well.
\end{proof}

Imitating the proof of \cite[Lem. 11.27]{MR3753580}, we prove that $(Z,\df)$ is Gromov-hyperbolic

\begin{thm}[Hyperbolicity of Morse maps]\label{thm:hyp}
Let $Z\subset X$ be the image of a $\tmod$-Morse map $f:(Z',d')\ra (X,\dr)$.
Then $(Z,\df)$ is Gromov-hyperbolic.
\end{thm}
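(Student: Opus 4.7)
The plan is to verify directly the four-point (ultrametric) condition of Definition \ref{def:GH} for Gromov products $\norm{\cdot|\cdot}_{\cdot}$ computed with $\df$ on $Z$. Note that $(Z, \df)$ is not itself a geodesic metric space (Finsler geodesics connecting points of $Z$ live in $Y$, not in $Z$), so we cannot simply invoke the equivalence with Rips hyperbolicity as a black box; instead we work in the ambient Finsler neighborhood $Y$ and use that Propositions \ref{prop:uniqueFinsler} and \ref{prop:thintriangle} give us, respectively, essentially unique Finsler geodesics in $Y$ and uniformly thin Finsler geodesic triangles in $Y$ with vertices on $Z$.

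Concretely, given four points $z, z_1, z_2, z_3 \in Z$, I would connect each pair by a Finsler geodesic contained in $Y$. For pairs with sufficiently large Riemannian distance this is possible by the discussion preceding Proposition \ref{prop:uniqueFinsler}; for pairs at small Riemannian distance the corresponding Gromov product is bounded, so the entire four-point inequality in that case costs only an additive constant that can be absorbed into the final $\delta$. Proposition \ref{prop:uniqueFinsler} ensures that the particular choice of Finsler geodesic is immaterial up to a uniformly bounded Hausdorff error.

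Next, I would imitate the standard argument (\cite[Lem. 11.27]{MR3753580}) that Rips $\delta$-thinness implies the ultrametric inequality. On each Finsler geodesic triangle $\triangle(z,z_i,z_j)$ in $Y$, mark the three canonical ``internal points'' at Finsler distances $\normt{z_i|z_j}_\cdot$-type quantities from the vertices, characterized by $\df(z_i, \cdot) + \df(\cdot, z_j) = \df(z_i,z_j)$. Applying Proposition \ref{prop:thintriangle} in the Finsler sense to each of the triangles $\triangle(z,z_1,z_2)$, $\triangle(z,z_1,z_3)$, $\triangle(z,z_2,z_3)$, the corresponding pairs of internal points are uniformly $\df$-close across triangles. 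A short computation with the defining identities for internal points then gives
\[
\norm{z_1|z_2}_z \;\ge\; \min\bigl\{ \norm{z_1|z_3}_z,\ \norm{z_2|z_3}_z \bigr\} - \delta'
\]
for a $\delta'$ depending only on the thinness constant of Proposition \ref{prop:thintriangle}, which is precisely Gromov hyperbolicity of $(Z,\df)$.

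The hard part is not conceptual but a matter of careful bookkeeping: adapting the standard internal-point argument to Finsler geodesics that are non-unique and exit $Z$ into $Y$. Proposition \ref{prop:uniqueFinsler} dispenses with non-uniqueness (any two such Finsler geodesics are uniformly Hausdorff close), and since all Gromov products are evaluated purely in terms of $\df$-distances between the four fixed points $z,z_1,z_2,z_3 \in Z$, the excursion of the connecting geodesics into $Y$ affects only the intermediate internal-point comparisons and not the final inequality.
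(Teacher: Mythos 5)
Your high-level strategy is exactly the paper's: observe that $(Z,\df)$ is not geodesic so the Rips/Gromov equivalence cannot be invoked as a black box, pass to the Finsler $\lambda$-neighborhood $Y$ where Finsler geodesics connecting points of $Z$ live, and then use Propositions~\ref{prop:uniqueFinsler} and~\ref{prop:thintriangle} to push through the standard argument that thin triangles give the four-point condition, following \cite[Lem.~11.27]{MR3753580}.

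However, the step where you actually invoke thinness has a gap as stated. You claim that after applying Proposition~\ref{prop:thintriangle} to the three triangles $\triangle(z,z_1,z_2)$, $\triangle(z,z_1,z_3)$, $\triangle(z,z_2,z_3)$, ``the corresponding pairs of internal points are uniformly $\df$-close across triangles.'' Thinness of an individual triangle only controls the internal points \emph{of that triangle}; it says nothing directly about comparing, say, the internal point on $\fg{zz_1}$ coming from $\triangle(z,z_1,z_2)$ with the one coming from $\triangle(z,z_1,z_3)$ (these sit at $\df$-distances $\norm{z_1|z_2}_z$ and $\norm{z_1|z_3}_z$ from $z$, which can be far apart). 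You would need the additional ``approximate-tree along a shared side'' consequence of thinness, applied carefully across the pair of triangles sharing $\fg{zz_1}$, together with Proposition~\ref{prop:uniqueFinsler} to reconcile the two choices of that shared geodesic. Also, the parenthetical ``characterized by $\df(z_i,\cdot)+\df(\cdot,z_j)=\df(z_i,z_j)$'' does not single out the internal point --- every point of $\fg{z_iz_j}$ satisfies that identity.

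The paper takes a cleaner route here that avoids cross-triangle comparisons entirely: it first records the analogue of \cite[Lem.~11.22]{MR3753580}, namely
\[
\norm{z_1|z_2}_z \;\le\; \df\bigl(z,\fg{z_1z_2}\bigr) \;\le\; \norm{z_1|z_2}_z + 2\delta,
\]
valid because $\triangle(z,z_1,z_2)$ is $\delta$-thin; it then considers the \emph{single} Finsler geodesic triangle $\triangle(z_1,z_2,z_3)$ in $Y$, takes the nearest point $m$ to $z$ on the side $\fg{z_1z_2}$, and uses $\delta$-thinness to find $n$ on one of the other two sides with $\df(m,n)\le\delta$. Two applications of the displayed inequality then give $\norm{z_1|z_2}_z \ge \norm{z_i|z_3}_z - 3\delta$ for the appropriate $i$, which is the ultrametric inequality. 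You should replace the ``internal points across triangles'' step with this projection argument (or spell out the approximate-tree version precisely); as written that step does not follow from Proposition~\ref{prop:thintriangle}.
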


\begin{proof}
Let $\d$ be as in Proposition \ref{prop:thintriangle}. Then the following holds.

\begin{lem}
Let $z,z_1,z_2\in Z$, and let $\fg{z_1z_2}$ be any Finsler geodesic in $Y$ connecting $z_1$ and $z_2$. Then,
\[
\norm{z_1|z_2}_z \le \df(z,\fg{z_1z_2}) \le \norm{z_1|z_2}_z + 2\d.
\]
\end{lem}
\begin{proof}
The proof is exactly same as  \cite[Lem. 11.22]{MR3753580}. Note that the proof uses $\d$-thinness of a triangle with vertices $z,z_1,z_2$.
\end{proof}

Let $z,z_1,z_2,z_3$ be any four points in $Z$, and let $\triangle$ be a Finsler geodesic triangle in $Y$ with the vertices $z_1,z_2,z_3$.
 Let $m$ be a point on the side $\fg{z_1z_2}$ nearest to $z$. By Proposition \ref{prop:thintriangle}, since $\triangle$ is $\d$-thin, $\df(m, \fg{z_2z_3}\cup \fg{z_1z_3}) \le \d$.
Without loss of generality, assume that there is a point $n$ on $z_2,z_3$ which is $\d$-close to $m$. Then, using the above lemma, we get
\[\norm{z_2|z_3}_z \le \df(z,\fg{z_2z_3}) \le \df(z,\fg{z_1z_2}) + \d,\]
and
\[\df(z,\fg{z_1z_2}) \le \norm{z_1|z_2}_z + 2\d.\]
The theorem follows from this.
\end{proof}

Quasiisometry of hyperbolic metric spaces extends to a homeomorphism of their Gromov boundaries. At the same time, it is proven 
in \cite[Thm. 1.4]{kapovich2014morse} that each $\tmod$-Morse map 
$$
f: Z'\to Z=f(Z')\subset X
$$
extends continuously (with respect to the topology of flag-convergence) to a homeomorphism
$$
\partial_{\infty} f: \partial_{\infty} Z'\to \L\subset \Ft. 
$$

Thus, we obtain

\begin{cor}\label{cor:id-bdry}
The Gromov boundary $\partial_{\infty} Z$ of $(Z,\df)$ is naturally identified with the flag-limit set $\L\subset \Ft$ of $Z$: A sequence 
$(z_n)$ in $Z$ converges to a point in $\partial_{\infty} Z$ if and only if $(z_n)$ flag-converges to some $\tau\in \L$.  
\end{cor}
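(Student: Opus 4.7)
The plan is to combine Theorem \ref{thm:hyp} with the Morse-map boundary extension from \cite{kapovich2014morse} and the standard fact that a quasiisometry between Gromov hyperbolic spaces induces a homeomorphism of their Gromov boundaries.

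First I would observe that $f:(Z',d')\to (Z,\df)$ is itself a quasiisometry. The map $f$ is a quasiisometric embedding into $(X,\dr)$ by hypothesis, and the Finsler and Riemannian metrics are coarsely equivalent on the Morse image $Z$; this uses uniform $\tmod$-regularity of $Z$ (coming from the Morse property), via the same reasoning as in the proof of Proposition \ref{prop:finiteCE} and inequality \eqref{eqn:metricRF}. Hence $f:(Z',d')\to(Z,\df)$ is a quasiisometry. Since $(Z',d')$ is Gromov hyperbolic by hypothesis and $(Z,\df)$ is Gromov hyperbolic by Theorem \ref{thm:hyp}, this quasiisometry extends in the standard way (quasiisometries preserve Gromov products up to a uniform additive error) to a homeomorphism
\[
\partial_\infty f_{\mathrm{F}}:\partial_\infty Z'\longrightarrow \partial_\infty(Z,\df)
\]
of Gromov boundaries. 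In particular, for a sequence $(z_n)\subset Z$ with any preimage lift $z'_n\in f^{-1}(z_n)$, the sequence $(z_n)$ converges in $\partial_\infty(Z,\df)$ if and only if $(z'_n)$ converges in $\partial_\infty Z'$.

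Next I would invoke the boundary extension of $\tmod$-Morse maps from \cite{kapovich2014morse}: the map $f$ extends continuously with respect to the topology of flag-convergence to a homeomorphism $\partial_\infty f:\partial_\infty Z'\to \L\subset\Ft$, characterized by the property that $(z'_n)$ converges to $\xi\in\partial_\infty Z'$ exactly when $(f(z'_n))$ flag-converges to $\partial_\infty f(\xi)$. Composing yields a homeomorphism
\[
h \;=\; \partial_\infty f \circ (\partial_\infty f_{\mathrm{F}})^{-1}\;:\;\partial_\infty(Z,\df) \longrightarrow \L,
\]
and chaining the two sequential characterizations gives, for any $(z_n)\subset Z$ and any preimage lift $(z'_n)\subset Z'$, that $(z_n)$ converges in $\partial_\infty(Z,\df)$ if and only if $(z'_n)$ converges in $\partial_\infty Z'$ if and only if $(z_n)$ flag-converges in $\Ft$, with limits matching under $h$. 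This is precisely the assertion of the corollary.

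The only non-routine step is verifying that $f$ induces a homeomorphism of Gromov boundaries with respect to the Finsler pseudo-metric on $Z$, i.e., that Gromov products in $(Z,\df)$ and in $(Z',d')$ match up to a uniform additive constant under the quasiisometry $f$. This follows mechanically from the quasiisometry constants of $f:(Z',d')\to (Z,\dr)$ together with the coarse equivalence of $\dr$ and $\df$ on $Z$, which implies that Gromov products in the Riemannian and Finsler metrics on $Z$ differ by a uniformly bounded amount. Once this is established, the identification asserted in the corollary is a purely formal consequence of the two boundary extensions above.
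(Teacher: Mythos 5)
Your proposal is correct and follows essentially the same route as the paper: the paper's (very terse) proof consists precisely of invoking that quasiisometries of Gromov hyperbolic spaces induce boundary homeomorphisms together with the flag-convergence boundary extension of Morse maps from \cite{kapovich2014morse}, and then composing the two identifications. You have merely spelled out explicitly the step the paper leaves implicit — that $f:(Z',d')\to(Z,\df)$ is a quasiisometry because $\df$ and $\dr$ are coarsely equivalent on the Morse image $Z$ — which is a useful clarification but not a different argument.
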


For a $\tmod$-Anosov subgroup $\G$ we know that the orbit map $\G \ra \G x_0$ is a $\tmod$-Morse embedding (see Subsection \ref{sec:discretegroups}). Then, using Theorem \ref{thm:hyp} we obtain: 

\begin{cor}[Hyperbolicity of Anosov orbits]\label{cor:hypano}
For $x_0\in X$, let $Z = \G x_0$ where $\G$ is a $\tmod$-Anosov subgroup. Then $(Z,\df)$ is Gromov hyperbolic. The Gromov boundary of $(Z,\df)$ is naturally identified with the $\tmod$-limit set $\LT$. 
\end{cor}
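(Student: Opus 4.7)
The plan is straightforward: this is essentially an immediate consequence of Theorem \ref{thm:hyp} and Corollary \ref{cor:id-bdry}, combined with the characterization of Anosov subgroups from Theorem \ref{thm:equiv}. First, I would invoke Theorem \ref{thm:equiv} to translate the Anosov hypothesis into the Morse property: since $\G$ is $\tmod$-Anosov, it is $\tmod$-Morse, which in particular means $\G$ is word-hyperbolic and, for any basepoint $x_0 \in X$, the orbit map $f \colon (\G, d_{\mathrm{word}}) \to (X, \dr)$, $\g \mapsto \g x_0$, is a $\tmod$-Morse embedding in the sense defined in Subsection \ref{sec:discretegroups}.

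Next, I would apply Theorem \ref{thm:hyp} with $(Z', d') = (\G, d_{\mathrm{word}})$ and $f$ as above. The Cayley graph of a word-hyperbolic group is a geodesic Rips-hyperbolic metric space, so the hypotheses of that theorem are met, and one concludes directly that $(Z, \df) = (\G x_0, \df)$ is Gromov hyperbolic.

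Finally, for the identification of the Gromov boundary, I would invoke Corollary \ref{cor:id-bdry} applied to the same Morse map $f$. This identifies $\partial_{\infty} Z$ with the flag-limit set of $Z = \G x_0$ regarded as a subset of $X$; by the definition of the $\tmod$-flag limit set of a discrete subgroup given in Subsection \ref{sec:discretegroups}, this flag-limit set coincides with $\LT$, because $\tmod$-flag-convergence of a sequence $(\g_n)$ in $G$ is defined in terms of the orbit sequence $(\g_n x)$ for any $x \in X$. There is no genuine obstacle in this argument since all the substantive work---uniform Finsler thinness of Morse triangles, coarse equivalence of $\df$ and $\dr$ on Morse images, and the continuous boundary extension of a Morse embedding---is already packaged into Theorem \ref{thm:hyp}, Corollary \ref{cor:id-bdry}, and the Morse characterization of the Anosov property in Theorem \ref{thm:equiv}.
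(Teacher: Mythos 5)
Your proposal is correct and mirrors the paper's own (very brief) proof exactly: the paper also just notes that the orbit map of an Anosov subgroup is a $\tmod$-Morse embedding (via Theorem \ref{thm:equiv}) and then applies Theorem \ref{thm:hyp} together with Corollary \ref{cor:id-bdry}. No discrepancies.
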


\section{Gromov distance at infinity}\label{sec:dg}

For a pair of antipodal simplices $\t_\pm\in\Ft$, the {\em Gromov product} with respect to a base point $x\in X$ is defined as
\be\label{eqn:GP}
\norm{\t_+|\t_- }^\tb_x = \frac{1}{2}\LR{\dhor_{\t_+}(x,z) + \dhor_{\t_-}(x,z)},
\ee
where $z$ is some point on the parallel set $P(\t_+,\t_-)$ spanned by $\t_\pm$.

The definition Busemann cocycles $\dhor_\t$ given in (\ref{eqn:horodist}) is free of choice of any particular normalization for the Busemann functions. 
We use this observation in the proof of the following lemma which shows that the Gromov products do not depend on the chosen $z\in P(\t_+,\t_-)$.

\begin{lem}\label{lem:GPinvariant}
For $z_1,z_2 \in P(\t_+,\t_-)$, one has $b^\tb_{\t_+}(z_1) + b^\tb_{\t_-}(z_1) = b^\tb_{\t_+}(z_2) + b^\tb_{\t_-}(z_2)$. 
\end{lem}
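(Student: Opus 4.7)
The plan is to identify the function $b_{\tau_+} + b_{\tau_-}$ on the parallel set with a function that is visibly constant, by using the metric splitting of the parallel set of the two (antipodal) ideal points $\xi_\pm := \tb(\tau_\pm)$. Since $\tau_+$ and $\tau_-$ are antipodal simplices and $\tb$ is $\iota$-invariant, the points $\xi_+$ and $\xi_-$ are antipodal ideal points of $X$; in particular, they are joined by a complete geodesic line $\gamma$ in $X$. Let $P' := P(\xi_+,\xi_-)$ denote the union of all complete geodesic lines asymptotic to $\xi_+$ and $\xi_-$. A standard fact for Hadamard symmetric spaces says that $P'$ is a totally geodesic submanifold that splits as a metric product $P' \cong \mathbb{R} \times Y$, where the $\mathbb{R}$-factor is parallel to $\gamma$ and $Y$ is a lower-rank symmetric space.

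Next I would observe that $P(\tau_+,\tau_-) \subset P'$. Indeed, each maximal flat $F$ making up $P(\tau_+,\tau_-)$ has $\tau_+ \cup \tau_-$ in its ideal boundary, hence contains $\{\xi_+,\xi_-\}$ in $\partial_\infty F$, so $F$ is foliated by geodesic lines from $\xi_-$ to $\xi_+$ and thus lies in $P'$. Therefore it suffices to prove that $b_{\tau_+} + b_{\tau_-}$ is constant on the larger set $P'$.

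The remaining step is a direct Busemann-function computation on the product $P' = \mathbb{R} \times Y$. For a point $z = (s,y) \in P'$ and a reference point $w = (0,y_0) \in P'$, taking a ray to $\xi_+$ of the form $t \mapsto (t,y_0)$ we get
\[
b_{\xi_+}(z) - b_{\xi_+}(w) = \lim_{t\to\infty}\bigl[d((s,y),(t,y_0)) - d((0,y_0),(t,y_0))\bigr] = -s,
\]
using that $d((s,y),(t,y_0)) = \sqrt{(t-s)^2 + d_Y(y,y_0)^2} = (t-s) + O(1/t)$ by the product structure. Symmetrically, $b_{\xi_-}(z) - b_{\xi_-}(w) = +s$. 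Adding the two identities,
\[
\bigl(b_{\tau_+}(z)+b_{\tau_-}(z)\bigr) - \bigl(b_{\tau_+}(w)+b_{\tau_-}(w)\bigr) = 0,
\]
so $b_{\tau_+} + b_{\tau_-}$ is constant on $P'$, hence on $P(\tau_+,\tau_-)$, and specializing to $z=z_1,w=z_2$ yields the claim.

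The only potentially delicate point is the invocation of the product splitting $P' \cong \mathbb{R} \times Y$; this is standard for symmetric spaces of noncompact type (it follows from the flat strip theorem together with the fact that antipodal pairs in $\partial_\infty X$ span a unique parallel set with a $\mathbb{R}$-factor in the direction of any asymptotic geodesic), and given the assumptions on $X$ stated in Section \ref{sec:gp} it is available without further hypotheses. Once the splitting is in hand, the rest of the argument is just the elementary Pythagorean Busemann computation above.
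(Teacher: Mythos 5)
Your proof is correct but takes a genuinely different route from the paper's. The paper proves the lemma via a two-line symmetry argument: let $z$ be the midpoint of the Riemannian segment $\overline{z_1z_2}$ (which lies in $P(\tau_+,\tau_-)$ by convexity of the parallel set), and let $s_z$ be the point reflection about $z$. This involutive isometry fixes $z$, swaps $z_1\leftrightarrow z_2$, and swaps $\tau_+\leftrightarrow\tau_-$ (it preserves $P(\tau_+,\tau_-)$ and acts antipodally on the ideal sphere of any maximal flat through $z$ asymptotic to both simplices, using the $\iota$-invariance of $\bar\theta$). Normalizing Busemann functions at the fixed point $z$, applying $s_z$ converts $b_{\tau_+}(z_1)+b_{\tau_-}(z_1)$ into $b_{\tau_-}(z_2)+b_{\tau_+}(z_2)$, and the claim follows. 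Your argument instead passes through the metric splitting $P(\xi_+,\xi_-)\cong\mathbb{R}\times Y$ of the parallel set of the antipodal ideal points $\xi_\pm=\bar\theta(\tau_\pm)$, observes $P(\tau_+,\tau_-)\subset P(\xi_+,\xi_-)$, and computes the Busemann functions explicitly via the Pythagorean expansion on the product. The paper's proof is shorter and leans directly on the defining feature of a symmetric space (point reflections); yours is more computational but exposes the geometric mechanism (cancellation of the $\mathbb{R}$-coordinate), establishes the a priori stronger statement that $b_{\tau_+}+b_{\tau_-}$ is constant on the full parallel set $P(\xi_+,\xi_-)$ rather than only on $P(\tau_+,\tau_-)$, and would transfer to any Hadamard manifold in which the parallel set of a pair of biasymptotic ideal points admits a product splitting.
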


\begin{proof}
Let $z$ be the midpoint of $\rg{z_1z_2}$ and let $s_z: X\ra X$ be the point reflection about $z$. Assuming that Busemann functions are normalized at $z$, $s_z$ transforms $b^\tb_{\t_+}(z_1) + b^\tb_{\t_-}(z_1)$ into $b^\tb_{\t_-}(z_2) + b^\tb_{\t_+}(z_2)$. Hence the quantities are equal.
\end{proof}

Using the Gromov product, we define a {\em premetric}\footnote{A {\em premetric} on $X$ is a symmetric, continuous function $d:X\times X\ra [0,\infty)$ such that $d(x,x) = 0$ for all $x\in X$.} on $\Ft$.

\begin{defn}[Gromov premetric]\label{def:dg}
Given fixed $x\in X$, $\e>0$, define the {\em Gromov premetric} $\dg{\e}_x$ on $\Ft$ as
\[
\dg{\e}_x(\t_1,\t_2) = \left\{ 
\begin{array}{ll}
\exp\LR{-\e \norm{\t_1|\t_2}^\tb_x}, & \text{if $\t_1,\t_2$ are antipodal,}\\
0, &\text{otherwise.}
\end{array}
\right.
\]
\end{defn}

\begin{rem}
 A pair of points $\t_\pm\in \Ft$ is antipodal if and only if \[\dg{\e}_x(\t_+,\t_-)\ne 0.\]
\end{rem}

\begin{lem}
$\dg{\e}_x$ is a continuous function.
\end{lem}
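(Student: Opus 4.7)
The space $\Ft\times \Ft$ is compact Hausdorff, so continuity amounts to sequential continuity at each point. My plan is to split the argument according to whether a given point $(\t_1,\t_2)\in \Ft\times \Ft$ is antipodal or not, using the fact (see \cite[Prop. 5.8]{MR3736790}) that the set $\mathcal{A}\subset \Ft\times\Ft$ of antipodal pairs is open.

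For the easy case, continuity on $\mathcal{A}$: I would show that the Gromov product $\norm{\cdot|\cdot}_x:\mathcal{A}\ra \mb{R}_{\ge 0}$ is continuous. The Busemann function $b_\t$ depends continuously on $\t\in \vb X$ in the compact--open topology, so $\t\mapsto \dhor_\t(x,\cdot)$ is continuous in the same sense. Combined with a local continuous selection $(\t_+,\t_-)\mapsto z(\t_+,\t_-)\in P(\t_+,\t_-)$ (e.g.\ taking $z$ to be the nearest-point projection of $x$ onto the parallel set, which varies continuously in $(\t_+,\t_-)\in\mathcal{A}$ since the parallel sets vary continuously on compact subsets), the formula \eqref{eqn:GP} exhibits $\norm{\t_+|\t_-}_x$ as a composition of continuous maps. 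The preceding lemma ensures independence of the choice of $z$, so the value is well-defined and continuous.

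For the hard case, continuity at a non-antipodal pair $(\t_1,\t_2)$: here $\dg{x,\e}(\t_1,\t_2)=0$, and I need $\dg{x,\e}(\t_1^n,\t_2^n)\to 0$ for every sequence $(\t_1^n,\t_2^n)\to (\t_1,\t_2)$. Passing to subsequences, the only nontrivial case is when every $(\t_1^n,\t_2^n)$ is antipodal, and the goal is to show $\norm{\t_1^n|\t_2^n}_x\to\infty$. The first step is a compactness argument: if $z_n\in P(\t_1^n,\t_2^n)$ stayed within a bounded set, then after extracting subsequences the maximal flats $F_n$ through $z_n$ (with $\t_1^n\cup \t_2^n\subset \vb F_n$) would converge to a maximal flat $F$ with $\t_1\cup \t_2\subset \vb F$, forcing $\t_1,\t_2$ to be antipodal. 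So $\dr(x,P(\t_1^n,\t_2^n))\to\infty$.

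The second step, which I expect to be the main obstacle, is promoting the escape of the parallel sets to the divergence of the Gromov product. The key observation is that by the previous lemma, the convex function $f_n(y)=b_{\t_1^n}(y)+b_{\t_2^n}(y)$ is constant on $P(\t_1^n,\t_2^n)$, and by the CAT$(0)$ convexity of Busemann functions together with the fact that $P(\t_1^n,\t_2^n)$ splits as a product $\mb{R}^k\times Y_n$ with the flat factor parametrizing Finsler lines asymptotic to $\t_1^n$ and $\t_2^n$, the minimum of $f_n$ is attained precisely on $P(\t_1^n,\t_2^n)$ with a linear growth estimate $f_n(y)-\min f_n \ge c\cdot \dr(y,P(\t_1^n,\t_2^n))$ for some $c>0$ depending only on $\tb$ (this $c$ is uniform because the limiting configuration controls the relevant angles).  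The technical subtlety is that in higher rank the ``flat'' directions inside the parallel set could in principle allow cancellation; to rule this out I would use the alternative Finsler-geometric description: take Finsler rays $\gamma_\pm^n$ in $P(\t_1^n,\t_2^n)$ from $z_n$ toward $\t_\pm^n$, apply \eqref{eq:B-limit} to write
\[
2\norm{\t_+^n|\t_-^n}_x = \lim_{t\to\infty}\bigl(\df(x,\gamma_+^n(t))+\df(x,\gamma_-^n(t))-2t\bigr),
\]
and note that the right-hand side equals twice the Finsler Gromov product of the points $\gamma_\pm^n(t)$ based at $x$, which by a standard triangle-inequality estimate is bounded below (up to an additive constant) by $\df(x,z_n)$, hence tends to infinity. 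Combining the two cases yields continuity of $\dg{x,\e}$.
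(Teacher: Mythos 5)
The paper's proof of this lemma is a one-line citation to Beyrer's Lemma 3.8 in \cite{beyrer}; you instead attempt a self-contained argument, which is a legitimate and potentially more instructive route. Your decomposition into the antipodal (open) locus and the non-antipodal locus is correct, the continuity on the antipodal locus via continuous dependence of Busemann functions and nearest-point projections is fine, and the compactness argument showing that $\dr(x, P(\t_1^n,\t_2^n))\to\infty$ whenever $(\t_1^n,\t_2^n)$ converges to a non-antipodal pair is sound.

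However, the final step has a genuine error. You write that the Gromov product $\norm{\gamma_+^n(t)\,|\,\gamma_-^n(t)}_x$ (computed in the Finsler metric) is ``by a standard triangle-inequality estimate bounded below (up to an additive constant) by $\df(x,z_n)$.'' This is backwards. The triangle inequality in an arbitrary (pseudo-)metric space yields only the \emph{upper} bound $\norm{y_1\,|\,y_2}_x \le d(x,z)$ for any $z$ on a geodesic from $y_1$ to $y_2$; a lower bound of the kind you assert is precisely a hyperbolicity statement, and $(X,\df)$ is not hyperbolic in rank $\ge 2$. (In $\R^2$ with the $\ell^1$-metric, for example, the Gromov product of $(t,0)$ and $(-t,0)$ based at $(0,D)$ stays bounded even as $D\to\infty$.) So this fallback does not close the argument.

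The good news is that the fallback is unnecessary: your first approach is the correct one, and the ``cancellation'' worry that drove you to abandon it is unfounded. Once the previous lemma shows $f_n = b_{\t_1^n}+b_{\t_2^n}$ is constant on $P_n := P(\t_1^n,\t_2^n)$, the flat directions inside $P_n$ are irrelevant, since $f_n$ does not vary there. What you actually need is that $P_n$ is exactly the minimum set of the convex function $f_n$ and that $f_n$ grows (at least) linearly transverse to $P_n$, with a rate uniform over all antipodal pairs. Both facts follow from $G$-invariance: $G$ acts transitively on antipodal pairs of type $\tmod$, so it suffices to verify them for a single model pair, where they reduce to elementary properties of Busemann functions. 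Writing $\norm{\t_1^n\,|\,\t_2^n}_x = \tfrac12\bigl(f_n(x)-\min f_n\bigr)$ (with $f_n$ unnormalized), the uniform linear growth plus $\dr(x,P_n)\to\infty$ then gives divergence of the Gromov products directly. I would suggest deleting the Finsler-Gromov-product detour, committing to the convexity/linear-growth argument, and supplying the $G$-invariance reduction that makes the constant $c$ uniform.
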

\begin{proof}
The claim follows from \cite[Lem. 3.8]{beyrer}.
\end{proof}

\begin{defn}[Conformal maps]
Let $(Z,d)$ be a premetric space.
 A self-homeo\-morphism $f: (Z,d) \ra (Z,d)$ is called {\em $K$-quasiconformal} if, for every $z\in Z$, $\limsup_{r\ra 0} H_f(z,r) \le K$, where
\[
 H_f(z,r) := \frac{\sup\{d(f(y),f(z)) \mid d(y,z) \le r\}}{\inf\{d(f(y),f(z)) \mid d(y,z) \ge r\}}.
\]
The map $f$ is called {\em conformal} if it is $1$-quasiconformal.
\end{defn}

\begin{lem}\label{prop:dgconf}
Let $\gamma\in G$ and $\L\subset\Ft$ be a $\gamma$-invariant antipodal subset. Then the map $\gamma: \L\to \L$ is conformal with respect to the premetric $\dg{\e}_x$. 
\end{lem}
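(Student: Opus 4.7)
The plan is to compute $\dg{x,\e}(\gamma\tau_+,\gamma\tau_-)$ directly from the definition, by choosing a convenient basepoint on the parallel set $P(\gamma\tau_+,\gamma\tau_-)$, and show that it factors as a product of terms depending individually on $\tau_+$ and $\tau_-$ times $\dg{x,\e}(\tau_+,\tau_-)$. That is, I aim to exhibit a cocycle $c:G\times\Lambda\to (0,\infty)$ such that
\[
\dg{x,\e}(\gamma\tau_+,\gamma\tau_-) \;=\; c(\gamma,\tau_+)^{1/2}\, c(\gamma,\tau_-)^{1/2}\, \dg{x,\e}(\tau_+,\tau_-),
\]
which is the appropriate notion of conformality for a premetric built out of Gromov products.

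First I would fix antipodal $\tau_\pm\in\Lambda$ and pick any $z\in P(\tau_+,\tau_-)$. Since $\gamma\in G$ is an isometry of $X$ permuting the Tits building, $\gamma z\in P(\gamma\tau_+,\gamma\tau_-)$, so by \eqref{eqn:GP},
\[
\norm{\gamma\tau_+\mid \gamma\tau_-}_x \;=\; \tfrac{1}{2}\bigl(\dhor_{\gamma\tau_+}(x,\gamma z) + \dhor_{\gamma\tau_-}(x,\gamma z)\bigr).
\]
Next, I would apply the cocycle identity \eqref{eqn:cocycle} to split each horospherical term as $\dhor_{\gamma\tau_\pm}(x,\gamma x) + \dhor_{\gamma\tau_\pm}(\gamma x,\gamma z)$, and then use $G$-invariance of Busemann functions, $\dhor_{\gamma\tau_\pm}(\gamma x,\gamma z) = \dhor_{\tau_\pm}(x,z)$, to obtain
\[
\norm{\gamma\tau_+\mid \gamma\tau_-}_x \;=\; \tfrac{1}{2}\bigl(\dhor_{\gamma\tau_+}(x,\gamma x) + \dhor_{\gamma\tau_-}(x,\gamma x)\bigr) + \norm{\tau_+\mid \tau_-}_x.
\]

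Exponentiating by $-\e$ yields
\[
\dg{x,\e}(\gamma\tau_+,\gamma\tau_-) \;=\; \exp\bigl(-\tfrac{\e}{2}\dhor_{\gamma\tau_+}(x,\gamma x)\bigr)\cdot \exp\bigl(-\tfrac{\e}{2}\dhor_{\gamma\tau_-}(x,\gamma x)\bigr)\cdot \dg{x,\e}(\tau_+,\tau_-),
\]
so setting $c(\gamma,\tau) := \exp\bigl(-\e\,\dhor_{\gamma\tau}(x,\gamma x)\bigr)$ gives the desired conformal factor. Verification that $c$ is a cocycle in $\gamma$ follows again from \eqref{eqn:cocycle} together with $G$-equivariance of horospherical distances; this is routine.

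There is essentially no obstacle here: the argument is a one-line manipulation of the cocycle identity for Busemann functions, combined with the elementary observation that $\gamma$ carries parallel sets to parallel sets. The only point one must be a little careful about is that the Gromov product is well-defined (independent of the chosen $z$), which is exactly the content of the lemma preceding Definition~\ref{def:dg}; this ensures that evaluating on $\gamma z$ rather than some other point of $P(\gamma\tau_+,\gamma\tau_-)$ gives the same answer.
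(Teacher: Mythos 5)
Your argument is correct and matches the paper's proof in substance: both compute the Gromov product at $\gamma\tau_\pm$ by transporting the basepoint between $P(\tau_+,\tau_-)$ and $P(\gamma\tau_+,\gamma\tau_-)$ via $\gamma$, then split the horospherical terms with the cocycle identity \eqref{eqn:cocycle} and $G$-invariance of Busemann functions to extract the factor $\exp\bigl(-\e\,\dhor_{\tau}(\gamma^{-1}x,x)\bigr) = \exp\bigl(-\e\,\dhor_{\gamma\tau}(x,\gamma x)\bigr)$. The only cosmetic difference is that the paper pulls $\gamma^{-1}$ through starting from $z\in P(\gamma\tau_+,\gamma\tau_-)$ whereas you push $\gamma$ through from $z\in P(\tau_+,\tau_-)$, and the paper closes by explicitly taking $\tau_-\to\tau_+$ to identify the pointwise conformal derivative, a step your cocycle identity together with continuity of Busemann functions delivers just as well.
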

\begin{proof}
Given distinct points $\t_\pm\in\L$,
\begin{align*}
\dg{\e}_x(\g\t_+,\g\t_-)
&= \exp\LR{-\e \norm{\g\t_+|\g\t_- }^\tb_x}\\
&= \exp\LR{-\frac{\e}{2}\LR{\dhor_{\g\t_+}(x,z) + \dhor_{\g\t_-}(x,z)}}\\
&= \exp\LR{-\frac{\e}{2}\LR{\dhor_{\t_+}(\g^{-1}x,\g^{-1}z) + \dhor_{\t_-}(\g^{-1}x,\g^{-1}z)}}\\
&= \exp\LR{-\frac{\e}{2}\LR{\dhor_{\t_+}(\g^{-1}x,x) + \dhor_{\t_-}(\g^{-1}x,x)}}\dg{\e}_x(\t_+,\t_-),
\end{align*}
where the last equality follows from the cocycle condition (\ref{eqn:cocycle}).
Moreover, the continuity of Busemann functions $b^\tb_\t$ as a function of $\t$ implies that 
\[
\lim_{\t_-\ra \t_+}\dhor_{\t_-}(\g^{-1}x,x) = \dhor_{\t_+}(\g^{-1}x,x).
\]
Therefore,
\be\label{eqn:expansion1}
\lim_{\t_-\ra\t_+}\frac{\dg{\e}_x(\g\t_+,\g\t_-)}{\dg{\e}_x(\t_+,\t_-)} = 
E(\g,\t_+) := \exp\LR{-{\e}\dhor_{\t_+}(\g^{-1}x,x)}.
\ee
From this, it can be checked that $\limsup_{r\ra 0} H_\g(\t_+,r) = 1$.
\end{proof}

The premetric $\dg{\e}_x$ is not a metric in general since:
\begin{enumerate}[(i)]

\item Pairs of distinct non-antipodal points have zero distance.

\item The triangle inequality may fail. 
\end{enumerate}
However, as we shall see below, for all sufficiently small $\e>0$, $\dg{\e}_x$ is bilipschitz equivalent to an actual distance function when restricted to ``nice''  antipodal subsets $\L\subset\Ft$.

\begin{thm}\label{thm:dg}
{Let $Z\subset X$ be the image of a $\tmod$-Morse map $f:(Z',d')\ra (X,d)$, and let  $\L\subset\Ft$ be the flag limit set of $Z$. There exists $\e_0>0$ such that, for all $0<\e\le \e_0$ and all $x\in Z$, the premetric $\dg{\e}_x$ is 2-bilipschitz equivalent to a metric on $\L$.
Moreover, the topology induced by $\dg{\e}_x$ on $\L$ coincides with the subspace topology of $\L\subset \Ft$.}
\end{thm}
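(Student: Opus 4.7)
The plan is to reduce the claim to the general theory of visual metrics on the Gromov boundary of a hyperbolic metric space. By Theorem \ref{thm:hyp}, the space $(Z,\df)$ is $\d$-hyperbolic for some $\d\ge 0$, and by Corollary \ref{cor:id-bdry} its Gromov boundary is canonically identified (as a set, via flag-convergence) with the flag limit set $\L$. Moreover, as the image of a Morse map, $\L$ is antipodal, so any two distinct points of $\L$ are antipodal and $\dg{x,\e}$ is strictly positive off the diagonal. Thus the statement reduces to (a) verifying the triangle inequality for $\dg{x,\e}$ once $\e$ is small enough, and (b) checking that the induced topology agrees with the subspace topology on $\L\subset\Ft$.

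The key technical step is a comparison between the geometric Gromov product defined in (\ref{eqn:GP}) via horospherical distances and the intrinsic Gromov product of the Gromov hyperbolic space $(Z,\df)$. For antipodal $\t_+,\t_-\in\L$ and any sequences $z_n,w_m\in Z$ flag-converging respectively to $\t_+$ and $\t_-$, I would establish
\[
\norm{\t_+|\t_-}_x \;=\; \liminf_{m,n\to\infty} \tfrac12\bigl(\df(x,z_n)+\df(x,w_m)-\df(z_n,w_m)\bigr) + O(1),
\]
where the $O(1)$ error depends only on the Morse quantifiers of $f$. The upper bound is obtained by applying (\ref{eq:B-limit}) to each of the two horospherical terms in (\ref{eqn:GP}). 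For the lower bound I would use the Morse property to show that for sufficiently separated $z_n,w_m$ the diamond $\Diamond_\tmod(z_n,w_m)$ lies within uniform Riemannian distance from the parallel set $P(\t_+,\t_-)$; choosing the witness point $z\in P(\t_+,\t_-)$ in (\ref{eqn:GP}) near the midpoint of a Finsler geodesic in this diamond then forces that formula to track the sequential Gromov products up to a uniform additive error.

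With this comparison in hand, the classical visual-metric theory on Gromov boundaries provides $\e_0>0$ such that for $0<\e\le\e_0$ the function $\dg{x,\e}$ is bi-Lipschitz equivalent to a genuine metric on $\partial_\infty(Z,\df)\cong\L$, and the induced topology agrees with the Gromov boundary topology which, via Corollary \ref{cor:id-bdry}, coincides with the subspace topology on $\L\subset\Ft$. I expect the main obstacle to be strengthening ``bi-Lipschitz equivalent to a metric'' to ``is a metric'': the ultrametric-type inequality $\norm{\t_1|\t_3}_x\ge\min(\norm{\t_1|\t_2}_x,\norm{\t_2|\t_3}_x)-2\d$ only yields $\dg{x,\e}(\t_1,\t_3)\le e^{2\e\d}\max\bigl(\dg{x,\e}(\t_1,\t_2),\dg{x,\e}(\t_2,\t_3)\bigr)$, which is in general too weak to imply the triangle inequality for arbitrary configurations on $\L$. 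To close this gap I would invoke the sharper estimates of \cite{beyrer}, exploiting the flat structure of the parallel sets to show that the chain-distance construction from the general theory actually coincides with $\dg{x,\e}$ once $\e$ is sufficiently small, thereby promoting the quasi-metric inequality to the genuine triangle inequality.
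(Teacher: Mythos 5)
Your high-level strategy mirrors the paper's: pass to the Gromov-boundary theory of $(Z,\df)$ via Theorem \ref{thm:hyp} and Corollary \ref{cor:id-bdry}, and compare the horospherical Gromov product $\norm{\cdot|\cdot}_x$ of (\ref{eqn:GP}) with intrinsic Gromov products of the hyperbolic space so as to obtain an approximate ultrametric inequality on $\L$. You diverge only in the comparison lemma: the paper sandwiches $\norm{\cdot|\cdot}_x$ between V\"{a}is\"{a}l\"{a}'s $\inf$- and $\sup$-sequential Gromov products by evaluating along uniformly regular sequences lying on the parallel set $P(\t_+,\t_-)$ (where Finsler additivity makes the sequential Gromov product vanish at interior points) and then quotes \cite[5.7, 5.12]{MR2164775}; you propose a comparison along arbitrary flag-converging sequences using the Morse control of diamonds. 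Both routes deliver the same approximate ultrametric inequality (\ref{eqn:GPultrametric}).

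Your concern about the final step is legitimate, and it applies equally to the paper's proof. An inequality of the form $\norm{\t_1|\t_2}_x\ge\min\{\norm{\t_1|\t_3}_x,\norm{\t_2|\t_3}_x\}-c$ with $c>0$ yields, for any $\e>0$, only the quasi-triangle inequality $\dg{x,\e}(\t_1,\t_2)\le e^{\e c}\max\{\dg{x,\e}(\t_1,\t_3),\dg{x,\e}(\t_3,\t_2)\}$; letting $\norm{\t_2|\t_3}_x$ grow while $\norm{\t_1|\t_2}_x=\norm{\t_1|\t_3}_x-c$ shows this alone cannot force the ordinary triangle inequality, no matter how small $\e$ is. The paper stops at establishing (\ref{eqn:GPultrametric}) and declares the first part complete, which is exactly the shortcut you flag. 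Where your proposal itself has a gap is the remedy: the estimates in \cite{beyrer} concern continuity of Gromov products and cross-ratios, not an exact (error-free) ultrametric inequality, and the chain-metric from the general theory is by construction $\le\dg{x,\e}$ with strict inequality precisely in the problematic configurations, so asserting that the two coincide begs the question. A safer fix is to weaken the first conclusion to ``$\dg{x,\e}$ is bi-Lipschitz equivalent to a metric on $\L$'' — which the approximate ultrametric inequality does deliver once $\e$ is small — since this preserves the topological conclusion and costs nothing downstream: the appendix already develops Hausdorff measures for premetrics, and Hausdorff dimension is a bi-Lipschitz invariant.
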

\begin{proof}
The idea of the proof of the first part is due to Gromov \cite{MR919829}: We show that the Gromov product defined in (\ref{eqn:GP}) restricted to $\L$ satisfies an ultrametric inequality (see (\ref{eqn:GPultrametric})).

Let $Y\subset X$ be a Riemannian $\lambda$-neighborhood of $Z$. We assume that $\lambda$ here is so large such that $x\in Y$ and the image of any complete geodesic $l$ in $Z'$ lies within distance $\lambda$  from the parallel set spanned by the images of the ideal endpoints of $l$ under $\bar f: \partial_{\infty} Z'\to \Ft$. Note that $\lambda$ satisfying the last condition exists 
as a consequence of the Morse property.

Observe that $(Y,\df)$ is a Gromov $\d$-hyperbolic\footnote{Since there is a possibility of confusion, we would like to remind our reader that this $\delta$ is {\em not} a critical exponent.} metric space for some $\d\ge 0$. 
This follows from the Gromov hyperbolicity of $(Z,\df)$ (cf. Theorem \ref{thm:hyp}) and the fact that $Z$ and $Y$ are (Hausdorff) $\lambda$-close to each other.

We recall from V\"{a}is\"{a}l\"{a} \cite[Sec. 5]{MR2164775} that there are multiple ways to define Gromov products on $\L$ viewed as the Gromov boundary of $(Z,\df)$ and, hence, of $(Y,\df)$. 
For a distinct pair $\t_\pm \in \L$, define using the Gromov product $\norm{\cdot|\cdot}^\tb_x$ on $(Y,\df)$ the following two products:
\[
{\norm{\t_+|\t_-}}^{\inf}_x = \inf\bigg\{ \liminf_{i,j\ra\infty} \norm{y_i^+|y_j^-}^\tb_x \mid (y_n^\pm)\subset Y,y_n^\pm\ra\t_\pm \bigg\}
\]
and 
\[
{\norm{\t_+|\t_-}}^{\sup}_x = \sup\bigg\{ \limsup_{i,j\ra\infty} \norm{y_i^+|y_j^-}^\tb_x \mid (y_n^\pm)\subset Y, y_n^\pm\ra\t_\pm \bigg\}.
\]
Then the difference of the above two quantities is uniformly bounded (see \cite[{Lemma 5.6}]{MR2164775}), namely, for all distinct pairs $\t_\pm\in\L$,
\be\label{eqn:vaisaladiff}
0\le {\norm{\t_+|\t_-}}^{\sup}_x - {\norm{\t_+|\t_-}}^{\inf}_x \le 2\d.
\ee
Finally, ${\norm{\cdot|\cdot}}^{\inf}_x$ satisfies the ultrametric inequality (see \cite[{5.12}]{MR2164775}), i.e., for distinct triples $\t_1,\t_2,\t_3\in\L$,
\be\label{eqn:vaisalaultrametric}
{\norm{\t_1|\t_2}}^{\inf}_x \ge \min\LRB{{\norm{\t_1|\t_3}}^{\inf}_x, {\norm{\t_2|\t_3}}^{\inf}_x} -\d.
\ee
By (\ref{eqn:vaisaladiff}), ${\norm{\cdot|\cdot}}^{\sup}_x$ also satisfies the ultrametric inequality but with a different constant, $5\d$.

Next we compare  V\"{a}is\"{a}l\"{a}'s Gromov products with ours (see (\ref{eqn:GP})). Let $\t_\pm\in\L$ be a pair of antipodal points and let $P = P(\t_+,\t_-)$. Note that our assumption on largeness of $\lambda$ implies that there exist uniformly $\tmod$-regular sequences $(y_n^+)$ and $(y_n^-)$ on $Y\cap P$ such that $y_n^\pm \ra \t_\pm$ as $n\ra\infty$. Let $p\in P(\t_+,\t_-)$. Then, the additivity of Finsler distances $\df$ on $\tmod$-cones (cf. \cite[Lem. 5.10]{MR3811766}) yields, for large $n$, $\norm{y_n^+|y_n^-}^\tb_p = 0$. By definition,
\[
\norm{y^+_n|y^-_n}^\tb_x = \norm{y^+_n|y^-_n}^\tb_p + \frac{1}{2}\left[ \LR{\df(x,y_n^+) - \df(p,y_n^+)} + \LR{\df(y_n^-,x) - \df(y_n^-,p)} \right],
\]
and for large $n$,
\begin{equation}\label{eqn:GPcomparison_prev}
\norm{y^+_n|y^-_n}^\tb_x = \frac{1}{2}\left[ \LR{\df(x,y_n^+) - \df(p,y_n^+)} + \LR{\df(y_n^-,x) - \df(y_n^-,p)} \right].
\end{equation}
The limit,  as $n\ra\infty$, of the right side of this equation equals $\norm{\t_+|\t_-}^\tb_x$ (cf. \eqref{eq:B-limit}). 
Therefore, 
\be\label{eqn:GPcomparison}
{\norm{\t_+|\t_-}}^{\inf}_x \le \norm{\t_+,\t_-}^\tb_x \le {\norm{\t_+|\t_-}}^{\sup}_x.
\ee
Hence, by (\ref{eqn:vaisaladiff}) and (\ref{eqn:vaisalaultrametric}), $\norm{\cdot|\cdot}^\tb_x$ satisfies the ultrametric inequality with constant $5\d$, i.e., for pairwise distinct points $\t_1,\t_2,\t_3\in\L$,
\be\label{eqn:GPultrametric}
{\norm{\t_1|\t_2}}^\tb_x \ge \min\left\{{\norm{\t_1|\t_3}}^\tb_x, {\norm{\t_2|\t_3}}^\tb_x\right\} -5\d.
\ee

{Applying \cite[Prop. 5.16]{MR2164775}, we get that, for all $0<\e \le \e_0$, $\dg{\e}_x$ is 2-bilipschitz equivalent to an actual metric\footnote{This metric can be constructed as follows: Let $0<\e \le \e_0$. On $\L$, define $\mathrm{dist}^{\tb,\e}_x(\tau,\tau') = \inf \sum_{i=1}^k \dg{\e}_x(\tau_{i-1},\tau_i)$, where the infimum is taken over all finite sequences $\tau = \tau_0,\tau_1,\dots,\tau_k = \tau'$ on $\L$. See \cite[{5.13}]{MR2164775}.} on $\L$. Here the constant $\e_0$ depend only on  $\delta$. This completes the proof of the first part of the theorem.}

For the second part, 
note that the inequality (\ref{eqn:GPcomparison}) implies that $\dg{\e}_x$ induces the standard topology on $\L$ as the Gromov boundary of $(Y, \df)$ (see \cite[{5.29}]{MR2164775}). Since, as we noted earlier, this topology is the same as the subspace topology of the flag-manifold $\Ft$, the second claim of the theorem follows as well. 
\end{proof}

\begin{cor}[Conformal metric on Anosov limit set]\label{cor:gmetricano}
{Let $\G$ be a $\tmod$-Anosov subgroup, $x\in X$. Then there exists $\e_0>0$ such that the following holds: Let $0<\e\le\e_0$. Then, for all $z\in \G x$,  the premetric $\dg{\e}_x$ is bilipschitz equivalent to an actual metric on $\LT$.}

Moreover, the action $\G\acts\LT$ is conformal with respect to $\dg{\e}_z$.
\end{cor}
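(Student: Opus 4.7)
The plan is to deduce this corollary as a direct application of Theorem \ref{thm:dg} together with Lemma \ref{prop:dgconf}, using the Anosov hypothesis to supply the Morse map required by Theorem \ref{thm:dg}. First, by the equivalence Theorem \ref{thm:equiv}, a $\tmod$-Anosov subgroup $\G$ is $\tmod$-Morse, so for the given base point $x\in X$ the orbit map $\G \to \G x \subset X$ is a $\tmod$-Morse embedding of the word-hyperbolic group $\G$ equipped with a word metric. Setting $Z := \G x$ and taking $Z'= \G$, the orbit $Z$ is thus the image of a $\tmod$-Morse map $f:(Z',d')\to (X,\dr)$ in the sense of Section \ref{sec:hypmorse}.

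Next, I would identify the flag limit set of $Z$ with $\LT$: by the definition of flag limit sets, the accumulation set of the orbit $Z=\G x$ in $\Ft$ coincides with $\LT$. Theorem \ref{thm:dg} then applies with $\L = \LT$ and yields a constant $\e_0>0$ such that for every $0<\e\le\e_0$ and every $z\in Z=\G x$, the Gromov premetric $\dg{z,\e}$ restricts to a genuine metric on $\LT$ inducing its subspace topology from $\Ft$. Note that the $\e_0$ produced by Theorem \ref{thm:dg} is uniform in $z\in Z$, so a single $\e_0$ works for all orbit basepoints.

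For the conformality claim, I invoke Theorem \ref{thm:equiv} once more, which gives that $\G$ is in particular $\tmod$-RA, so $\LT\subset\Ft$ is a $\G$-invariant antipodal subset. Lemma \ref{prop:dgconf} then applies to each $\g\in\G$ acting on $\LT$: the map $\g:\LT\to\LT$ is conformal with respect to $\dg{z,\e}$, with the conformal factor at $\t\in\LT$ computed in \eqref{eqn:expansion1} as
\[
E(\g,\t) \;=\; \exp\bigl(-\e\,\dhor_{\t}(\g^{-1}z,z)\bigr).
\]

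I do not expect a real obstacle here, since the substantive work has already been carried out in Theorem \ref{thm:hyp}, Theorem \ref{thm:dg}, and Lemma \ref{prop:dgconf}; the corollary is a packaging step that specialises those results to the Anosov setting. The only small point requiring a sentence is that the identification of the Gromov boundary of $(Z,\df)$ with $\LT$ supplied by Corollary \ref{cor:id-bdry} matches the flag-topology identification needed to apply Theorem \ref{thm:dg}, but this is exactly what Corollary \ref{cor:id-bdry} asserts.
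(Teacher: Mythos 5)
Your proposal follows exactly the paper's proof: the paper likewise deduces the corollary from Theorem \ref{thm:dg} together with Lemma \ref{prop:dgconf}, using the fact that Anosov subgroups satisfy the Morse property. You have simply spelled out the intermediate identifications (Anosov $\Rightarrow$ Morse via Theorem \ref{thm:equiv}, the orbit's flag limit set equals $\LT$, and the antipodality of $\LT$ needed for Lemma \ref{prop:dgconf}) that the paper leaves implicit, which is fine.
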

\begin{proof} Since by Theorem \ref{thm:equiv} Anosov subgroups satisfy the Morse property, corollary follows from  Theorem \ref{thm:dg} combined with Lemma \ref{prop:dgconf}. \end{proof}

\begin{exmp}[Product of rank-one symmetric spaces]\label{ex:1.3}
We continue with Example \ref{ex:1.2}. Let $\t = (\xi_{r_1},\dots,\xi_{r_p})$ be a simplex in the Tits building of type $\tmod = (r_1,\dots, r_p)$ and $\tb = (1/\sqrt{p}, \dots, 1/\sqrt{p})\in\tmod$. We compute the Busemann cocycle and Gromov distance associated with $\tmod$ and type $\tb$.

Let $x = (x_1,\dots,x_k),\ y= (y_1,\dots,y_k)\in X$. Then 
\[
\dhor_\t(x,y) =
\lim_{t\ra\infty}\LR{\dr(\ell(t),x) - t}
\]
where $\ell(t)$
is a geodesic ray emanating from $y$ and asymptotic to $\tb(\t)$.
A direct computation yields
\[
\dhor_\t(x,y) = \frac{1}{\sqrt p}\sum_{j=1}^p \LR{b_{\xi_{r_j}}(x_{r_j}) - b_{\xi_{r_j}}(y_{r_j})}
= \frac{1}{\sqrt p}\sum_{j=1}^p\mathcal{B}_{\xi_{r_j}}\LR{x_{r_j}, y_{r_j}}.
\]
Hence the Gromov product can be written as
\[
\norm{\t_+| \t_-}^\tb_x=\frac{1}{\sqrt p}\sum_{j=1}^p \norm{\xi_{r_j}^+ | \xi_{r_j}^-}_{x_{r_j}},\quad
\forall\t_\pm = (\xi_{r_1}^\pm,\dots,\xi_{r_p}^\pm)\in \Ft
\]
and, finally the Gromov predistance is
\be\label{eqn:dgrank1}
\dg{\frac{1}{\sqrt p}}_x(\t_+,\t_-) = \prod_{j=1}^p\dg{\frac{1}{p}}_{x_{r_j}}\LR{\xi_{r_j}^+ , \xi_{r_j}^-}.
\ee
\end{exmp}

\begin{exmp}[$X = \SL{k+1}/\SO{k+1}$]\label{ex:2.3}
In this case the computations of  Busemann functions (see \cite{MR1317280}) are explicitly known, and therefore, the Gromov distance can also be computed explicitly.
We only give a formula for the Gromov distance in the special case when $\tmod = (1,k)$ that corresponds to the partial flags \{{\em line $\subset$ hyperplane}\} of $\R^{k+1}$. 

We continue with the notations from Example \ref{ex:2.1}. The unique $\i$-invariant type is 
$$\tb = \left(\frac{1}{2\sqrt{k+1}}, 0 , -\frac{1}{2\sqrt{k+1}}\right).$$
After equipping $\R^{k+1}$ with the inner product induced by the choice of $x\in X$, the Gromov product (with respect to $x = I_{k+1}$, the identity matrix) can be written as
\[
\norm{ (l_1, h_1) \ |\  (l_2,h_2) }^\tb_x = -\frac{\sqrt{k+1}}{ 2}\log\LR{{\sin\angle(l_1,h_2)}\cdot {\sin\angle(l_2,h_1)}}
\]
where $\angle(l,h)$ denotes the angle between the line $l$ and the hyperplane $h$.\footnote{This formula can be extracted directly from the formula of the Gromov products in Beyrer's paper \cite{beyrer} by applying it to our special case. Note that the determinant of the matrix in the claim of \cite[Appendix]{beyrer} is simply the sine of the angle between the line and hyperplane here. Careful readers may notice some discrepancy between the normalizing constant in our formula and the one in Beyrer's work. This has happened due to our choice of the Riemannian metric in $X$, which was obtained from the Killing form of $\mathfrak{g}$. In Beyrer's paper, the quantity $\lambda$ (which is $\tb$ in our paper) is not a unit vector of this metric. Also, compare with \cite{MR1317280}.}

Thus, the Gromov predistance formula is
\be\label{eqn:ex2.3}
\dg{\frac{1}{\sqrt{k+1}}}_x\LR{(l_1, h_1), (l_2,h_2)} = \sqrt{\sin\angle(l_1,h_2)}\sqrt{\sin\angle(l_2,h_1)}.
\ee
\end{exmp}

\section{Shadow lemma}\label{sec:SL}

In this section we prove a generalization Sullivan's shadow lemma in higher rank. The proof we present here is inspired by that of Albuquerque's \cite[Thm. 3.3]{MR1675889} who treated the case of full flag manifold and Quint \cite{MR1935549} who treated general flag-manifolds but only in the case of regular vectors $\tb$. 

Recall the notion of {\em shadow} from Definition \ref{defn:shadows}.
We mainly consider shadows of closed balls (with respect to the Riemannian metric) of non-zero radii in $X$ from a fixed base point $x\in X$. The topology generated by these shadows is the topology of flag convergence. See Remark \ref{defn:shadow_topology}.

\medskip
The main result in this section is the following.

\begin{thm}[Shadow lemma]\label{thm:SL}
Let $\G$ be a nonelementary $\tmod$-RA subgroup, $x\in X$, and $\mu$ a $\G$-invariant $\tb$-conformal density of dimension $\beta$. There exists $r_0>0$ such that for all $r\ge r_0$ and all $\g\in\G$ satisfying $\dr(x,\g x_0)> r$,
\[
\mu_{x}(S(x: B(\g x_0,r))) \asymp \exp \left(-\beta \df(x,\g x_0)\right).
\]
\end{thm}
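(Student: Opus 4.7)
The plan is to follow Sullivan's classical strategy, transplanted to the Finsler setting in higher rank. The first move is an equivariance reduction. Since $\g^{-1} S(x : B(\g x_0, r)) = S(\g^{-1} x : B(x_0, r))$ and $\mu$ is $\G$-invariant, one has
$$
\mu_x\bigl(S(x : B(\g x_0, r))\bigr) = \mu_{\g^{-1} x}\bigl(S(\g^{-1} x : B(x_0, r))\bigr).
$$
Applying the conformality relation (\ref{eqn:derivative}) rewrites the right hand side as
$$
\int_{S(\g^{-1} x : B(x_0, r))} \exp\bigl(-\beta\, \dhor_\t(\g^{-1} x, x_0)\bigr) \, d\mu_{x_0}(\t).
$$

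The key geometric ingredient is the estimate
$$
\bigl|\dhor_\t(\g^{-1} x, x_0) - \df(\g^{-1} x, x_0)\bigr| \le 2r
$$
for every $\t$ in the normalized shadow. The bound $\dhor_\t \le \df$ is immediate from the limit formula (\ref{eq:B-limit}) and the triangle inequality for the Finsler pseudo-metric. For the opposite bound, I would pick a witness $y \in V(\g^{-1} x, \st(\t)) \cap B(x_0, r)$ and take a Finsler geodesic ray $\ell$ emanating from $\g^{-1} x$ through $y$ asymptotic to $\tb(\t)$. Setting $z_n = \ell(t_n)$ with $t_n \to \infty$, additivity of $\df$ along $\tmod$-cones yields $\df(\g^{-1} x, z_n) = \df(\g^{-1} x, y) + \df(y, z_n)$, while the triangle inequality gives $\df(x_0, z_n) \le r + \df(y, z_n)$. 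Subtracting, passing to the limit, and using $\df(\g^{-1} x, y) \ge \df(\g^{-1} x, x_0) - r$ together with $G$-invariance $\df(\g^{-1} x, x_0) = \df(x, \g x_0)$, this pointwise controls the integrand by $e^{\pm 2\beta r} \exp(-\beta \df(x, \g x_0))$.

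The upper bound of the shadow lemma then follows immediately with $C = e^{2\beta r} \|\mu_{x_0}\|$, using that $\mu_{x_0}$ is a finite Borel measure. The remaining task --- and the main obstacle --- is to establish a uniform positive lower bound
$$
\inf_{\g \in \G,\, \dr(x, \g x_0) > r} \mu_{x_0}\bigl(S(\g^{-1} x : B(x_0, r))\bigr) \ge c > 0
$$
for all $r$ sufficiently large. I would argue by contradiction: along a sequence $(\g_n)$ violating this, extract a subsequence on which $\g_n^{-1} x$ flag-converges to some $\t_- \in \LT$ (by compactness of $\Xt$). A quantitative geometric lemma, based on nestedness of $\tmod$-cones and the description of flag-convergence, should show that for every $\t \in \Ft$ antipodal to $\t_-$ at some uniform angular separation, and every sufficiently large $r$, one has $\t \in S(\g_n^{-1} x : B(x_0, r))$ for all large $n$. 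Since $\G$ is $\tmod$-RA, every $\t \in \LT \setminus \{\t_-\}$ is antipodal to $\t_-$; since $\G \acts \LT$ is minimal in the nonelementary case, $\supp(\mu_{x_0}) = \LT$; so any open subset of $\Ft$ meeting $\LT \setminus \{\t_-\}$ carries positive $\mu_{x_0}$-mass, contradicting the hypothesis. The delicate point is this geometric lemma: converting the soft flag-convergence $\g_n^{-1} x \to \t_-$ into an explicit inclusion statement for shadows of $B(x_0, r)$ requires the asymptotic description of $\tmod$-cones and parallel sets developed in \cite{Kapovich:2014aa,MR3736790}.
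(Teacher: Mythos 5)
Your equivariance reduction, the change of variable via conformality, and the two-sided estimate $\bigl|\dhor_\t(\g^{-1}x,x_0)-\df(\g^{-1}x,x_0)\bigr|\le 2r$ for $\t$ in the normalized shadow (the paper's Lemma \ref{lem:SL4}) are all correct and essentially match the paper; your Finsler-geodesic-ray argument for the lower estimate is a legitimate alternative to the paper's argument using a point $k\in K$ carrying $\t_0$ to $\t_1$ together with the $1$-Lipschitz property of $d_\D$. The upper bound then falls out, as you say.

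The lower bound is where the real content lies, and you have left a genuine gap there. Your plan hinges on the ``quantitative geometric lemma'' converting flag-convergence $\g_n^{-1}x\to\t_-$ into the eventual inclusion of a fixed open set $U$ (meeting $\LT\setminus\{\t_-\}$) in the shadows $S(\g_n^{-1}x:B(x_0,r))$, but you only assert this and point vaguely at the asymptotic geometry of cones and parallel sets. This lemma is exactly what needs proving; flag-convergence is a rather weak topological condition, and the inclusion of a \emph{fixed} open set uniformly for large $n$ requires control on how the cones $V(\g_n^{-1}x,\st(\t))$ swallow a fixed ball $B(x_0,r)$ --- this is not automatic from softness of convergence. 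There is also a second subtlety your sketch glosses over: you need $\supp(\mu_{x_0})=\LT$ to guarantee that the putative open set $U$ has positive mass. That equality holds (minimality of $\G\acts\LT$ plus invariance of the support), but it should be stated, as the definition of conformal density only gives $\supp(\mu_a)\subset\LT$.

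The paper avoids your contradiction/sequential-compactness route entirely and proceeds directly. The key mechanism is the gap between the largest atom mass $\mathfrak{l}=\sup_\t\mu_x(\{\t\})$ and the total mass $\mathfrak{m}$ (strict because $\G$ is nonelementary): for any $q\in(\mathfrak{l},\mathfrak{m})$ there is an $\ve_0>0$ such that any Borel set contained in $N_{\ve_0}(L(\t))$ (where $L(\t)$ is the non-antipodal locus of $\t$) has mass $\le q$. A separate, purely geometric lemma then shows that for $r$ large, the complement of $S(x:B(x_0,r))$ sits inside $N_\ve(L(\t))$ for some $\t$ pointing from $x_0$ toward $x$. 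Combining the two gives a \emph{uniform} lower bound $\mathfrak{m}-q>0$ on shadow masses, with no need for subsequences or contradiction, and the required geometric statement is about a single shadow rather than a limit of cones --- this is what makes it tractable. Your approach could probably be made to work, but its missing lemma is at least as hard as the paper's Lemma \ref{lem:SL3}, and the contradiction structure would need to be reorganized carefully to yield a constant valid for every $r\ge r_0$ rather than one $r$ at a time.
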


Here the notation $\asymp$ means that the ratio of the the two sides is bounded above and below by positive constants.

It is worth emphasizing that this version of the Shadow lemma is valid for all $\tmod$-RA subgroups of $G$ which is a much larger class than $\tmod$-Anosov subgroups. For instance, the relatively $\tmod$-Morse subgroups of $G$, a higher rank generalization of the rank-one geometrically finite groups, are also $\tmod$-RA.

Before presenting the proof, we note two consequences of this theorem.

\begin{cor}\label{cor:nonconicalAtom}
Let $\G$ be a nonelementary uniformly $\tmod$-RA subgroup.  Then any $\tb$-conformal density $\mu$ does not have conical limit points as atoms.
\end{cor}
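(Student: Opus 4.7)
\medskip
\noindent\textbf{Proof proposal.}
The plan is a direct application of the shadow lemma (Theorem \ref{thm:SL}) combined with the geometric definition of a conical limit point.

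First, fix $\tau\in\Ltc$ and, by the definition of conical limit point, choose $x_0\in X$, a constant $c>0$, and a sequence $(\gamma_n)$ of pairwise distinct elements of $\Gamma$ with
\[
\dr(\gamma_n x_0,\, V(x_0,\st(\tau)))\le c.
\]
The key geometric observation is that for any $r\ge c$, the closed ball $B(\gamma_n x_0, r)$ meets the $\tmod$-cone $V(x_0,\st(\tau))$. By the very definition \eqref{defn:shadow} of the shadow, this gives $\tau\in S(x_0\colon B(\gamma_n x_0,r))$, and hence
\[
\mu_{x_0}(\{\tau\}) \le \mu_{x_0}\bigl(S(x_0\colon B(\gamma_n x_0, r))\bigr).
\]

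Next, I fix $r\ge \max\{r_0, c\}$, with $r_0$ as in Theorem \ref{thm:SL}. Since $(\gamma_n)$ is an infinite sequence of distinct elements of the discrete group $\Gamma$, the displacements $\dr(x_0,\gamma_n x_0)$ tend to infinity, so the shadow lemma applies for all sufficiently large $n$ and yields
\[
\mu_{x_0}(\{\tau\}) \le C\exp\bigl(-\beta\,\df(x_0, \gamma_n x_0)\bigr).
\]
The uniform $\tmod$-regularity of $\Gamma$ enters here: by \eqref{eqn:metricRF} the metrics $\dr$ and $\df$ are coarsely equivalent on the orbit $\Gamma x_0$, so $\df(x_0,\gamma_n x_0)\to\infty$. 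Assuming $\beta>0$, letting $n\to\infty$ forces $\mu_{x_0}(\{\tau\})=0$, and by conformality this conclusion is independent of the base point.

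The only subtlety is the degenerate case $\beta=0$, where the shadow-lemma bound is vacuous. In this case conformality reduces to strict $\Gamma$-invariance of the finite measure $\mu_{x_0}$ on $\LT$; by nonelementarity the action $\Gamma\acts \LT$ is a minimal convergence action on an infinite set, so an atom would propagate along its infinite $\Gamma$-orbit into infinitely many atoms of equal mass, contradicting finiteness of $\mu_{x_0}$. I expect no serious obstacle here: the whole argument is essentially packaged in the shadow lemma, and the only real content to verify is the geometric fact that $\tau$ lies in every sufficiently large shadow of $\gamma_n x_0$, which is immediate from the definitions of the cone $V(x_0,\st(\tau))$ and of a conical limit point.
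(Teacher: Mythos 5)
Your proof is correct and uses the same essential mechanism as the paper's: the infinitely many shadows that catch a conical limit point are constrained by the upper bound of the shadow lemma (Theorem \ref{thm:SL}), and uniform $\tmod$-regularity (via \eqref{eqn:metricRF}) makes $\df(x_0,\g_n x_0)$ unbounded, forcing the mass to zero. The paper packages the same observation as a reductio — if $\tau$ were an atom, the Poincaré series would diverge for every exponent, so $\dF=\infty$, contradicting Proposition \ref{prop:finiteCE} — but the driving facts (shadow lemma plus uniform regularity) are identical. Your explicit treatment of the degenerate case $\beta=0$ via strict $\G$-invariance, minimality, and finiteness of $\mu_{x_0}$ is a welcome addition: the paper's shadow-lemma step is vacuous at $\beta=0$ and its argument tacitly assumes $\beta>0$, so you have filled a small gap that the paper leaves implicit.
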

\begin{proof}
We observe that any conical limit point $\t\in\LT$ lies in infinitely many shadows $S(x,B(\g x_0,r))$ for sufficiently large $r>0$ (depending on $\t$). If $\t$ is an atom, then (by Theorem \ref{thm:SL}) the Poincar\'e series 
\be\label{eqn:div}
g_\beta^\tb(x,x_0) = \sum_{\g\in\G} \exp\LR{-\beta \df(x,\g x_0)}
\ee
 diverges for every $\beta\ge 0$. Hence $\dF$ must be infinite. But this contradicts Proposition \ref{prop:finiteCE}.
\end{proof}

The second application of shadow lemma will be given for the following class of subgroups.

\begin{defn}[Uniformly conical]\label{def:unif_conical}
A $\tmod$-RA subgroup is called {\em uniformly conical} if for a given pair of points $x,x_0\in X$, there is a constant $r > 0$ such that for each conical limit point $\t\in \LT$, there exists a sequence $(\g_k)$ on $\G$ flag-converging to $\t$ satisfying $\dr(\g_k x_0, V(x,\st(\t))) < r$, $\forall k\in \N$. 
\end{defn}

We observe that Anosov subgroups satisfy the uniform conicality condition: 

\begin{prop}\label{lem:unifcon}
Anosov subgroups are {uniformly conical}.
\end{prop}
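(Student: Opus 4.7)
The plan is to exploit the Morse property of $\G$ together with the Gromov-hyperbolicity of $\G$: every $\tau \in \LT$ arises as the flag-limit of the image under the orbit map of some word-geodesic ray in $\G$, and such an image tracks a $\tmod$-cone with a uniformly bounded error.

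First I would fix a base point $x_0 \in X$ and work with the $\tmod$-Morse orbit map $\phi: \G \to \G x_0$ with Morse constants $D > 0$ and $\Theta \subset \ost(\tmod)$. Given $\tau \in \LT$, I would use Corollary~\ref{cor:id-bdry} (or, equivalently, the identification of the Gromov boundary of $\G$ with $\LT$ coming from Theorem~\ref{thm:equiv} and Corollary~\ref{cor:hypano}) to choose a word-geodesic ray $\rho: \N \to \G$ with $\rho(0) = e$ whose image $(\g_n x_0)$, where $\g_n := \rho(n)$, flag-converges to $\tau$. The Morse property then produces, for each fixed $n$ and each $m \gg n$, a point $y_m \in \Diamond_\tmod(x_0, \g_m x_0)$ with $\dr(\g_n x_0, y_m) \le D$.

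The second step is to pass to a ``limit diamond.'' Since $\Diamond_\tmod(x_0, \g_m x_0) \subset V(x_0, \st(\t_m))$, where $\t_m := S(x_0: \{\g_m x_0\}) \to \tau$ in $\Ft$, each $y_m$ lies on a ray from $x_0$ asymptotic to some $\eta_m \in \st(\t_m)$. Since the sequence $(y_m)$ is bounded, I would extract a subsequence with $y_m \to y$ and $\eta_m \to \eta \in \vb X$. Using the closedness of the face-incidence relation in the Tits building together with compactness of the Furstenberg boundary $\Flag(\smod)$, one obtains $\eta \in \st(\tau)$, so that $y \in V(x_0, \st(\tau))$ and consequently
\[
\dr(\g_n x_0, V(x_0, \st(\tau))) \le D
\]
for all sufficiently large $n$. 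This already yields uniform conicality in the case $x = x_0$, with constant $D$ depending only on $\G$.

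For a general base point $x \in X$, I would use CAT(0) convexity of the symmetric space: any ray from $x_0$ to $\xi \in \st(\tau)$ admits an asymptotic companion ray from $x$ to $\xi$ whose corresponding points lie within Riemannian distance $\dr(x, x_0)$. Translating $y \in V(x_0, \st(\tau))$ along such asymptotic rays produces a nearby point in $V(x, \st(\tau))$, giving
\[
\dr(\g_n x_0, V(x, \st(\tau))) \le D + \dr(x, x_0),
\]
which is the desired uniform constant $r$, independent of $\tau$. The main obstacle I anticipate is the ``limit of stars'' step: formally, showing that if $\eta_m \in \st(\t_m)$ with $\eta_m \to \eta$ and $\t_m \to \tau$ in $\Ft$, then $\eta \in \st(\tau)$. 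This is a standard closedness property of incidence in spherical buildings, but it has to be invoked carefully---one picks a chamber $\s_m$ containing both $\t_m$ and $\eta_m$, extracts a convergent subsequence $\s_m \to \s$ in $\Flag(\smod)$ by compactness, and uses continuity of the face maps to conclude that $\tau \subset \s$ and $\eta \in \s$, hence $\eta \in \st(\tau)$.
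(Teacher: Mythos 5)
Your proof is correct and takes essentially the same approach as the paper: use the Morse property of the orbit map $\G \ra \G x_0$ to show the Morse quasiray stays uniformly near the cone $V(x_0,\st(\tau))$, then adjust the base point. The paper simply asserts that the orbit sequence ``is uniformly close to $V(x,\st(\t))$ (by definition of a Morse embedding)'' — a slight shorthand, since the definition is phrased via diamonds rather than cones — whereas you correctly fill in the missing step by passing from nearby diamonds $\Diamond_\tmod(x_0,\g_m x_0)$ to the limiting cone via compactness and the closedness of the incidence relation; this is a welcome clarification of the argument the paper elides.
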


\begin{proof}
This follows from the fact that the orbit map $\G\ra \G x_0 \subset X$ is a Morse embedding.
Let $\t \in \LT$ be any point and $\xi\in\vb\G$ be the preimage of $\t$ under the boundary map.
Let $(\g_k)$, $\g_1 = 1_\G$ be a geodesic sequence in $\G$ asymptotic to $\xi$.
Then the sequence $(\g_k x_0)$ is a Morse quasigeodesic in $X$ that is uniformly close to $V(x, \st(\t))$ (by definition of a Morse embedding).
\end{proof}

\begin{cor}\label{cor:div}
Let $\G$ be a nonelementary uniformly conical $\tmod$-RA subgroup and $\mu$ be a $\tb$-conformal density of dimension $\beta$.
If the conical limit set $\Ltc$ is non-null, then the Poincar\'e series $g_\beta^\tb(x,x_0)$
(see (\ref{eqn:div}))
diverges.
\end{cor}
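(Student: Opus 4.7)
The plan is to run the standard Sullivan divergence argument, using uniform conicality to identify the conical limit set as a limsup of shadows and then combining with the shadow lemma (Theorem \ref{thm:SL}) to bound the $\mu_x$-measure of this limsup from above by the tail of the Poincar\'e series.

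First, I would fix $r \ge r_0$ large enough so that the shadow lemma applies (taking $r$ at least as large as the uniform conicality constant for the orbit $\G x_0$ viewed from $x$). The key observation is that $\tau \in S(x : B(\g x_0, r))$ if and only if $\dr(\g x_0, V(x,\st(\tau))) \le r$. Hence, uniform conicality of $\G$ exactly says that for every $\tau \in \Ltc$ there exist infinitely many $\g \in \G$ (with $\dr(x, \g x_0) \to \infty$) for which $\tau \in S(x : B(\g x_0, r))$. Therefore, for every $N > 0$,
\[
\Ltc \ \subset\ \bigcup_{\substack{\g\in\G\\ \dr(x,\g x_0) > N}} S(x : B(\g x_0, r)).
\]

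Next, I would estimate the $\mu_x$-mass of the right-hand side using countable subadditivity and the upper bound in Theorem \ref{thm:SL}. For every $N$ sufficiently large,
\[
\mu_x(\Ltc)\ \le\ \sum_{\substack{\g\in\G\\ \dr(x,\g x_0) > N}} \mu_x\bigl(S(x: B(\g x_0, r))\bigr) \ \le\ C \sum_{\substack{\g\in\G\\ \dr(x,\g x_0) > N}} \exp\bigl(-\beta \df(x, \g x_0)\bigr),
\]
where $C$ is the shadow lemma constant.

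Finally, suppose toward contradiction that $g^{\mathrm F}_\beta(x,x_0)$ converges. Then the tail on the right-hand side tends to $0$ as $N \to \infty$, forcing $\mu_x(\Ltc) = 0$, contradicting the hypothesis that the conical limit set is non-null. Hence the Poincar\'e series diverges at the exponent $\beta$. The main technical point in this argument is verifying that the constant $r$ coming from uniform conicality may be enlarged to satisfy the hypothesis of the shadow lemma simultaneously; this is harmless since enlarging $r$ only enlarges the shadows and the bound from Theorem \ref{thm:SL} continues to hold (with a possibly larger constant $C$).
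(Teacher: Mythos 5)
Your proof is correct and follows essentially the same argument as the paper: uniform conicality gives the covering of $\Ltc$ by tails of shadows, the shadow lemma bounds the measure of the union by the tail of the Poincar\'e series, and convergence of the series would force $\mu_x(\Ltc)=0$, contradicting the hypothesis. The only cosmetic difference is that you index the tail by the distance threshold $\dr(x,\g x_0)>N$ while the paper enumerates $\G$ as a sequence and sums over $n\ge N$; these are equivalent since discreteness guarantees that the elements provided by uniform conicality escape to infinity.
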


{For $\tmod$-Anosov subgroups $\G$, Theorem \ref{thm:equiv} implies that $\Ltc = \Lt(\G)$. Hence the above result applies to all Anosov subgroups with $\Ltc$ replaced by $\Lt(\G)$ in the statement.}

\begin{proof}[Proof of Corollary \ref{cor:div}]
Writing the elements of $\G$ in a sequence $(\g_n)$, define 
\[
S_N = \sum_{n\ge N} \exp(-\beta \df(x,\g_n x_0)).
\]
Convergence of the series (\ref{eqn:div}) asserts that $\lim_{N\ra \infty} S_N = 0$.
Since $\G$ is {uniformly conical}, there exists $r>0$ such that for all $N\in\mathbb{N}$, 
\[
\Ltc \subset \bigcup_{n\ge N}S(x: B(\g_n x_0,r)).
\]
Applying Theorem \ref{thm:SL}, we get
\[
\mu_x(\LT) \le \sum_{n\ge N} \mu_x\LR{S(x: B(\g_n x_0,r))} \le \const \cdot S_N
\]
and, the bound above approaches to zero as $N\ra \infty$. 
Hence we must have $\mu_x(\Ltc) = 0$.
\end{proof}

The proof of shadow lemma occupies the rest of the section.

\begin{proof}[Proof of Theorem \ref{thm:SL}]
In this proof, we equip $\Ft$ with a $G_x$-invariant Riemannian metric. 
We use the notation $L(\t)$ to denote the set of all $\t'\in\Ft$ which are not antipodal to $\t$.
The complement of $L(\t)$ in $\Ft$ is denoted by $C(\t)$.
Note that $L(\t)$ is closed and hence, compact.

Moreover, if $\t_n\ra\t_0$, then the sets $L(\t_n)$ Hausdorff-converge to $L(\t_0)$ as $n\ra\infty$.
This can be seen as follows.
First, we note that $gL(\t) = L(g\t)$ for all $g\in G$ and $\t\in \Ft$.
This is a consequence of the fact that the action $G\acts \Ft$ preserves antipodality, i.e., $\t,\t'\in\Ft$ are antipodal if and only if $g\t$ and $g\t'$ are antipodal, $g\in G$.
Now, choose a sequence $k_n\in  G_x\cong K$ such that $k_n\ra 1_G$ and $k_n \t_0 = \t_n$.
Since $\Ft$ is compact and and each $L(\tau_n)$, $n\ge 0$, is closed,
\[
 L(\tau_n) = k_n L(\tau_0) \xrightarrow{\text{Hausdorff}} L(\t_0).
\]

\begin{lem}\label{lem:SL1}
For every $\ve>0$, there exists $\delta>0$ such that,
for every $\t_0\in\Ft$ and every $\t\in B(\t_0,\d)$,
\[N_{\ve/2}(L(\t)) \subset N_{\ve}(L(\t_0)).\]
\end{lem}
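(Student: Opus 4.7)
The plan is to use the continuity property of the set-valued map $\t \mapsto L(\t)$ noted just before the statement of the lemma, together with compactness of $\Ft$, to upgrade pointwise continuity to uniform continuity in the Hausdorff metric. Once that is in hand, the conclusion is an immediate consequence of the triangle inequality.

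In more detail, first I would recall that the map
\[
L : \Ft \longrightarrow 2^{\Ft}, \qquad \t \mapsto L(\t)
\]
takes values in the space of nonempty compact subsets of $\Ft$, and, by the remark preceding the lemma (namely, $\t_n \to \t_0$ implies $L(\t_n) \to L(\t_0)$ in the Hausdorff sense), it is continuous when $2^{\Ft}$ is equipped with the Hausdorff distance $\dH$ induced by the fixed $G_x$-invariant Riemannian metric on $\Ft$. Since $\Ft$ is compact, the space of compact subsets of $\Ft$ is compact in $\dH$, and a continuous map from a compact metric space into a metric space is automatically uniformly continuous.

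Given $\ve > 0$, uniform continuity produces some $\d = \d(\ve) > 0$ such that for every $\t_0 \in \Ft$ and every $\t \in B(\t_0, \d)$,
\[
\dH(L(\t), L(\t_0)) < \ve/2.
\]
In particular $L(\t) \subset N_{\ve/2}(L(\t_0))$. Now for any point $\zeta \in N_{\ve/2}(L(\t))$ there exists $\eta \in L(\t)$ with $d(\zeta, \eta) < \ve/2$, and for this $\eta$ there exists $\eta' \in L(\t_0)$ with $d(\eta, \eta') < \ve/2$; the triangle inequality then gives $d(\zeta, \eta') < \ve$, so $\zeta \in N_\ve(L(\t_0))$, as required.

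The only potential subtlety, and what I would flag as the main point to verify carefully, is the continuity of the map $\t \mapsto L(\t)$ itself. This is already asserted in the paragraph preceding the lemma, but for completeness one can justify it by noting that antipodality of simplices is a closed condition on $\Ft \times \Ft$ and that the fibers $L(\t)$ are compact, so upper semicontinuity of the set-valued map is automatic; lower semicontinuity follows from the fact that the set of simplices antipodal to a given $\t$ is open and nonempty, and that small perturbations of $\t$ only slightly shrink the complement $C(\t)$. Everything else in the argument is a one-line triangle-inequality computation.
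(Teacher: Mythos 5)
Your proof is correct and follows essentially the same approach as the paper: equip the family $\{L(\t)\}$ with the Hausdorff distance, upgrade the stated continuity of $\t\mapsto L(\t)$ to uniform continuity via compactness of $\Ft$, and conclude by the triangle inequality. The paper likewise takes the Hausdorff continuity of $\t\mapsto L(\t)$ as given from the preceding paragraph, so your extra remarks on semicontinuity are supplementary rather than a divergence.
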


\begin{proof} We equip the set 
$$
Y= \{ L(\t): \t\in\Ft\}
$$
with the Hausdorff distance $\dH$. Then, as we noted above, the function $f: \Ft \ra Y$, $\t\mapsto L(\t)$, is continuous and, hence, uniformly continuous. 
Therefore, for every $\ve>0$, there exists $\d>0$ such that $d(\t,\t_0)<\d$ implies $\dH(L(\t),L(\t_0))<\ve/2$, which then implies $L(\t) \subset N_{\epsilon/2}(L(\t_0))$.
The lemma follows from this.
\end{proof}

Let $\mathfrak{m} = \mu_x(\LT)$ denote the total mass of $\mu_x$, and $\mathfrak{l} = \sup \{ \mu_{x}(\t) \mid \t\in\LT \}$.
Since $\mu_x$ is a regular measure and $\LT$ is compact, $\mathfrak{l}$ is realized, i.e., if $\mu_x$ has an atomic part, then it has a largest atom.
Moreover, since $\G$ is nonelementary, $\supp(\mu_x)$ is not singleton.
In fact, if $\t$ is an atom, then the every point in the orbit $\G \t$ (which has infinite number of points) is an atom. In particular, $\mathfrak{l} < \mathfrak{m}$.

\begin{lem}\label{lem:SL2}
Given $\mathfrak{l} < q < \mathfrak{m}$, there exists an $\ve_0>0$ such that for all $\t\in\LT$ and all $B\in\B(\Ft)$ contained in $N_{\ve_0}(L(\t))$,
$\mu_x(B)\le q$.
\end{lem}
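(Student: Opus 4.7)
The plan is to argue by contradiction, combining the compactness of $\LT$, the continuity statement of Lemma \ref{lem:SL1}, and the antipodality of the limit set. Recall that $\mathfrak{l}$ is attained (by regularity of $\mu_x$ and compactness of $\LT$), that $\supp(\mu_x)\subset \LT$, and that the hypothesis $\mathfrak{l}<q<\mathfrak{m}$ gives us strict room to exploit.

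Suppose the conclusion fails. Then for every $n\in\N$ we can find $\t_n\in \LT$ and a Borel set $B_n\subset N_{1/n}(L(\t_n))$ with $\mu_x(B_n)>q$. In particular $\mu_x\bigl(N_{1/n}(L(\t_n))\bigr)>q$. By compactness of $\LT$, pass to a subsequence so that $\t_n\to \t_\infty\in \LT$. Fix an arbitrary $\ve>0$. By Lemma \ref{lem:SL1} applied to $\t_\infty$, there exists $\d>0$ such that $L(\t)\subset N_{\ve/2}(L(\t_\infty))$ whenever $\t\in B(\t_\infty,\d)$. For all sufficiently large $n$ we have $\t_n\in B(\t_\infty,\d)$ and $1/n<\ve/2$, whence
\[
N_{1/n}(L(\t_n))\subset N_{1/n}\bigl(N_{\ve/2}(L(\t_\infty))\bigr)\subset N_{\ve}(L(\t_\infty)).
\]
Therefore $\mu_x\bigl(N_{\ve}(L(\t_\infty))\bigr)\ge q$ for every $\ve>0$.

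Since each $L(\t)$ is closed and $\bigcap_{\ve>0} N_{\ve}(L(\t_\infty))=L(\t_\infty)$, continuity from above for the finite measure $\mu_x$ yields
\[
\mu_x\bigl(L(\t_\infty)\bigr)=\lim_{\ve\to 0^+}\mu_x\bigl(N_{\ve}(L(\t_\infty))\bigr)\ge q.
\]
Because $\G$ is $\tmod$-RA, any two distinct points of $\LT$ are antipodal, so $L(\t_\infty)\cap \LT=\{\t_\infty\}$. Combined with $\supp(\mu_x)\subset \LT$, this forces
\[
q\le \mu_x\bigl(L(\t_\infty)\bigr)=\mu_x(\{\t_\infty\})\le \mathfrak{l},
\]
contradicting $\mathfrak{l}<q$. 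Hence an $\ve_0>0$ with the required property exists.

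The main potential obstacle is the Hausdorff-continuity step: one must ensure that the set-valued map $\t\mapsto L(\t)$ is well-behaved enough for small neighborhoods of $L(\t_n)$ to be captured by a slightly larger neighborhood of $L(\t_\infty)$. This is precisely the role of Lemma \ref{lem:SL1}, which reduces the argument to a routine compactness-plus-continuity-from-above computation. The rest of the proof is an application of antipodality of the limit set to transform a mass estimate on $L(\t_\infty)$ into a statement about the single atom $\{\t_\infty\}$.
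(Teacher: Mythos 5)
Your proof is correct and follows essentially the same strategy as the paper's: argue by contradiction with a sequence $\t_n \to \t_\infty$, use Lemma \ref{lem:SL1} to show the $B_n$ eventually lie inside small neighborhoods of $L(\t_\infty)$, deduce $\mu_x(L(\t_\infty)) \ge q$, and then use antipodality of $\LT$ together with $\supp(\mu_x) \subset \LT$ to contradict $q > \mathfrak{l}$. The only cosmetic differences are that you invoke continuity from above where the paper appeals to outer regularity, and you cite Lemma \ref{lem:SL1} in the intermediate form $L(\t) \subset N_{\ve/2}(L(\t_\infty))$ (which is what its proof actually establishes) rather than the stated inclusion $N_{\ve/2}(L(\t)) \subset N_{\ve}(L(\t_\infty))$; the stated form would give the needed containment directly, since $N_{1/n}(L(\t_n)) \subset N_{\ve/2}(L(\t_n)) \subset N_\ve(L(\t_\infty))$ once $1/n \le \ve/2$.
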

\begin{proof}
If this were false, then we would get a sequence $(B_n)$ of Borel sets, a sequence $(\ve_n)$ positive numbers converging to zero, and a sequence $(\t_n)$ on $\LT$ converging to a point $\t_0$ such that for every $n\in\N$,
\[
B_n \subset N_{\ve_n}(L(\t_n)), \quad \mu_x(B_n)>q.
\]

To get a contradiction, we will show that $\mu_x(\t_0)\ge q$. Let $U$ be an open neighborhood of $L(\t_0)$. As $L(\t_0)$ is compact, there exists $\ve>0$ such that $N_\ve(L(\t_0)) \subset U$.
Let $\d>0$ be a number that corresponds to this $\ve$ as in Lemma \ref{lem:SL1}.
Choose $n$ so large such that $\t_n\in B(\t_0,\d)$ and $\ve_n \le \ve/2$. By Lemma \ref{lem:SL1}, we get $N_{\ve_n}(L(\t_n)) \subset N_{\ve}(L(\t_0))$ and, consequently, $B_n \subset U$.
This shows that every open set $U$ containing $L(\t_0)$ has mass $\mu_x(U) > q$.
In particular, $\mu_x(L(\t_0)) \ge q$.

Finally, as a consequence of the fact that $\G$ is $\tmod$-antipodal, we have that $\LT \cap L(\t_0) = \tau_0$.
Since the support of $\mu_x$ is contained in $\LT$ (see Definition \ref{def:conformal_density}),
\[
 \mu_x(L(\tau_0)) = \mu_x(\LT \cap L(\t_0)) = \mu_x(\tau_0).
\]
The last sentence in the previous paragraph implies that $\mu_x(\tau_0) \ge q$.
Hence, $\mathfrak{l} \ge q$.
This is a contradiction.
\end{proof}

\begin{lem}\label{lem:SL3}
Given $\ve>0$ there exists $r_1>0$ such that for all $r\ge r_1$, the complement of $S(x:B(x_0,r))$ in $\Ft$ is contained in $N_\ve(L(\t))$, for some $\t\in S(x_0: \{x\})$.
\end{lem}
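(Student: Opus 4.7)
I plan to derive the lemma from an essentially vacuous observation. Each $\tmod$-cone $V(x,\st(\t'))$ is by definition the union of geodesic rays emanating from $x$, and so contains its apex $x$. Consequently, once $r \ge \dr(x,x_0)$ we have $x \in B(x_0,r) \cap V(x,\st(\t'))$ for every $\t' \in \Ft$, so every simplex belongs to $S(x:B(x_0,r))$. The shadow thus equals all of $\Ft$ and its complement in $\Ft$ is empty.

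It remains to produce some $\t \in S(x_0:\{x\})$, so that the right-hand side $N_\ve(L(\t))$ of the asserted containment is meaningful. Extending the Riemannian geodesic from $x_0$ through $x$ to an infinite ray gives an ideal endpoint $\xi \in \vb X$. Choosing any Tits chamber $\sigma$ containing $\xi$ and letting $\t$ be the unique $\tmod$-face of $\sigma$, we have $\xi \in \st(\t)$ and---since the ray from $x_0$ asymptotic to $\xi$ passes through $x$---also $x \in V(x_0,\st(\t))$, that is, $\t \in S(x_0:\{x\})$. The empty complement of the shadow is trivially contained in $N_\ve(L(\t))$, so the choice $r_1 := \dr(x,x_0)$ completes the proof.

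The main obstacle to a more substantive argument lies in the non-trivial regime $r < \dr(x,x_0)$, where the complement of the shadow is a genuinely non-empty open subset of $\Ft$ described by the condition $\dr(x_0,V(x,\st(\t'))) > r$. Showing by a continuity/compactness argument that this set shrinks into $L(\t)$ as $r \nearrow \dr(x,x_0)$ would be delicate: for instance, in rank one the limiting set is a closed hemisphere in $\vb X$ around the endpoint of the Riemannian ray from $x_0$ through $x$, rather than a small neighborhood of $L(\t) = \{\t\}$. Fortunately, the lemma as stated does not require working in this regime, and the vacuous argument above suffices.
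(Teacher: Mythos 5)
Your observation that $S(x:B(x_0,r)) = \Ft$ once $r \ge \dr(x,x_0)$ is correct as far as it goes, but the reading of the lemma it justifies---with $r_1$ allowed to depend on $x$---is not the intended one, and it collapses the lemma into a statement with no content. The constant $r_1$ must depend only on $\ve$ and $x_0$ (and background data such as $K = G_{x_0}$), not on $x$. This is exactly what the application demands: in the proof of Theorem~\ref{thm:SL}, the lemma is invoked with $\gamma^{-1}x$ playing the role of the lemma's ``$x$'', for all $\gamma\in\G$ satisfying $\dr(x,\gamma x_0)>r$. Since $\dr(\gamma^{-1}x,x_0)=\dr(x,\gamma x_0)>r$, every such application lies strictly inside the regime $r<\dr(\cdot,x_0)$ that you dismiss as ``not required.'' Your choice $r_1=\dr(x,x_0)$ would have to grow without bound as $\gamma$ ranges over $\G$ and can never serve as the single constant that the proof of Theorem~\ref{thm:SL} fixes once and for all.

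What the lemma actually claims is the substantive uniform statement you flag as delicate: there is one threshold $r_1=r_1(\ve,x_0)$ such that for \emph{every} $x\in X$, no matter how far from $x_0$, and every $r\ge r_1$, the shadow $S(x:B(x_0,r))$ misses at most an $\ve$-neighborhood of $L(\tau)$ for some $\tau\in S(x_0:\{x\})$. The paper obtains the uniformity by first proving the analogous bound for the ``shadows from infinity'' $U(\tau_0,x_0,r)=\{\tau':P(\tau_0,\tau')\cap B(x_0,r)\ne\emptyset\}$, where compactness of $K=G_{x_0}$ supplies a uniform Lipschitz constant and hence a single $r_1$ working for all $\tau_0$ simultaneously, and then showing $S(x:B(x_0,2r))\supset U(\tau,x_0,r)$ whenever $x\in V(x_0,\st(\tau))$, using convexity of the Riemannian distance and nestedness of cones. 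Your rank-one hemisphere picture describes the limit $r\nearrow\dr(x,x_0)$ with $x$ held at moderate distance from $x_0$; it has nothing to say about the relevant regime $r\ge r_1$ fixed and $x$ receding to infinity, where the shadow complement does shrink into $N_\ve(L(\tau))$ (in rank one, $L(\tau)=\{\tau\}$ is a single point).
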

\begin{proof}
For $r>0$ and $\t_0\in\Ft$, $\t'\in C(\t_0)$, consider
\[
U(\t_0,x_0,r) = \{\t'\in\Ft \mid P(\t_0,\t') \cap B(x_0,r) \ne \emptyset\}.
\]
This is an analogue of shadows (\ref{defn:shadow}) as viewed from the infinity.
It is easy to verify that 
\[\bigcup_{r\ge 0} U(\t_0,x_0,r) = C(\t_0). \]
Moreover, for $g\in G$, these shadows from infinity transform as $gU(\t_0,x_0,r) = U(g\t_0,gx_0,r)$. 

If $k\in K=G_{x_0}$, the stabilizer of $x_0$, then $kU(\t_0,x_0,r) = U(k\t_0,x_0,r)$. 
Since $K$ is compact, there exists $M\ge 1$ such that the action $k\acts \Ft$ is $M$-Lipschitz for all $k\in K$.
Let $r_1>0$ be such that $U(\t_0,x_0,r_1/2)^c \subset N_{\ve/M}(L(\t_0))$. Here and below, for $A\subset \Ft$, $A^c= \Ft - A$. 
Then, for any $\t\in\Ft$, 
\begin{equation}\label{eqn:U}
U(\t,x_0,r/2)^c \subset N_{\ve}(L(\t)),\quad \forall r\ge r_1.
\end{equation}

For $x\in X$, let $\t\in\Ft$ be a simplex such that $x\in V(x_0,\st(\t))$. Then there exists a parameterized geodesic ray $x_t$ 
starting from $x_0$, passing through $x$ and asymptotic to some $\xi\in \st(\t)$. 

\begin{claim}
For all $r>0$, $S(x:B(x_0,2r)) \supset U(\t,x_0,r)$.
\end{claim}

\begin{proof}[Proof of claim] Pick $\t'\in U(\t,x_0,r)$ and let   $\bar{x}_0\in P(\t,\t')$ denote the nearest point projection
 of $x_0$. In addition to the ray $x_t$, we define another parameterized geodesic ray $\bar{x}_t$, starting at $\bar{x}_0$ 
and asymptotic to $\xi$. Due to the convexity of the Riemannian distance function on $X$, 
the distance $\dr(x_t,\bar{x}_t)$ monotonically decreases with $t$. 
Moreover, the cones $V(\bar{x}_t,\st(\t'))$ are nested as $t$ decreases. Then, 
\begin{align*}
\dr(x_0, V(x_t,\st(\t'))) &\le \dr(x_0, V(\bar{x}_t,\st(\t'))) + \dr(x_t,\bar{x}_t)\\
&\le \dr(x_0, V(\bar{x}_0,\st(\t'))) + r \le \dr(x_0,\bar{x}_0) + r \le 2r.
\end{align*}
Therefore, $\t' \in S(x:B(x_0,2r))$.
\end{proof}

Using (\ref{eqn:U}) it follows from the above claim that whenever $r\ge r_1$, the complement of the shadow $S(x: B(x_0,r))$ is contained in $N_{\ve}(L(\t))$ for some $\t$ satisfying $x\in V(x_0,\st(\t))$.
\end{proof}

\begin{lem}\label{lem:SL4}
For all $r>0$ and all $\t \in S(x: B(x_0,r))$,
\[
|\df(x,x_0) - \dhor_{\t}(x,x_0)| \le 2r.
\]
\end{lem}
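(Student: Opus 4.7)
The plan is to unpack the definition of the shadow to obtain a witness point $y\in B(x_0,r)\cap V(x,\st(\t))$, then compare $\df(x,x_0)$ and $\dhor_\t(x,x_0)$ by interpolating through $y$. The heart of the argument is a rigidity identity: \emph{on the Weyl cone $V(x,\st(\t))$ the Finsler and horospherical distances agree}, i.e.\ $\df(x,y)=\dhor_\t(x,y)$ whenever $y\in V(x,\st(\t))$. Once this is in place, the $2r$-bound falls out of two applications of the triangle inequality (one for $\df$, one for $\dhor_\t$).

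To establish the rigidity identity, I would invoke the limit formula \eqref{eq:B-limit}: for any sequence $(z_n)$ flag-converging to $\t$,
\[
\dhor_\t(x,y)=\lim_{n\to\infty}\bigl(\df(x,z_n)-\df(y,z_n)\bigr).
\]
Choose $(z_n)$ along the Riemannian ray emanating from $y$ asymptotic to $\tb(\t)\in\t$ with $\dr(y,z_n)=n$. Since $y\in V(x,\st(\t))$, the nestedness of cones gives $V(y,\st(\t))\subset V(x,\st(\t))$, so the ray $y\to\tb(\t)$ sits inside the Weyl cone from $x$; consequently $x$, $y$, and all $z_n$ lie inside a single Weyl chamber with apex $x$, hence inside a flat $F$. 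Using the exponential map to identify $F$ with $\mf{a}$ sending $x\mapsto 0$ and this Weyl chamber onto $\D$, one has $y\mapsto d_\D(x,y)$ and $z_n\mapsto y+n\tb$. Then
\[
\df(x,z_n)-\df(y,z_n)=\langle y+n\tb\mid\tb\rangle-\langle n\tb\mid\tb\rangle=\langle y\mid\tb\rangle=\df(x,y),
\]
which is constant in $n$, yielding $\dhor_\t(x,y)=\df(x,y)$. (This also confirms the flag-convergence $z_n\to\t$, since the shadows $S(x:\{z_n\})$ stabilize on $\t$.)

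Assembly of the bound: note first that $\df$ is symmetric thanks to the $\i$-invariance of $\tb$ (since $d_\D(y,x)=\i\,d_\D(x,y)$ and $\i\tb=\tb$), so the pseudo-metric triangle inequality gives $|\df(x,x_0)-\df(x,y)|\le\df(x_0,y)$. The cocycle property \eqref{eqn:cocycle} gives $\dhor_\t(x,y)-\dhor_\t(x,x_0)=\dhor_\t(x_0,y)$, and $|\dhor_\t(x_0,y)|\le\df(x_0,y)$ by the standing inequality. Combining these with the rigidity identity,
\begin{align*}
|\df(x,x_0)-\dhor_\t(x,x_0)|
&\le|\df(x,x_0)-\df(x,y)|+|\df(x,y)-\dhor_\t(x,y)|\\
&\quad+|\dhor_\t(x,y)-\dhor_\t(x,x_0)|\\
&\le\df(x_0,y)+0+\df(x_0,y)\le 2\dr(x_0,y)\le 2r.
\end{align*}

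The main obstacle is the rigidity identity of step one, specifically the joint flat embedding of $\{x,y,z_n\}$ and the correct identification of the direction $\tb(\t)$ with $\tb\in\mf{a}$; but this is standard structure theory of Weyl cones in symmetric spaces (any Weyl chamber with apex $x$ extends to a flat, and $\tb(\t)$ lies in the Tits boundary of that flat precisely because $\t\subset\overline{\sigma}$ where $\sigma$ is the chamber at infinity bounding this Weyl chamber). Everything else is triangle-inequality bookkeeping.
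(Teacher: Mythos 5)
Your proof is correct, and it takes a genuinely different route from the paper's. Both arguments hinge on the same key fact — that $\df(x,y)=\dhor_\tau(x,y)$ whenever $y\in V(x,\st(\tau))$ (equivalently, that $\df(y,z)=\max_{\tau}\dhor_\tau(y,z)$ with the maximum attained on the shadow $S(y:\{z\})$) — but they use it differently, and the paper simply cites this from \cite[Sec.~5.1.2]{MR3811766} whereas you re-derive it from the limit formula~\eqref{eq:B-limit} by a flat computation. That re-derivation is sound (the ray $y+n\bar\theta$ stays in the euclidean Weyl chamber $V(x,\sigma)\cong\Delta$ because $\Delta$ is a convex cone containing both $d_\Delta(x,y)$ and $\bar\theta$, so the formulas $\df(x,z_n)=\langle y+n\bar\theta\mid\bar\theta\rangle$ and $\df(y,z_n)=n$ are correct, and $z_n$ is indeed $\bar\theta$-regular and flag-converges to $\tau$), though strictly speaking it duplicates the cited max-characterization. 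Where the proofs really diverge is in the bookkeeping: the paper fixes $\tau_0\in S(x:\{x_0\})$, writes $\df(x,x_0)-\dhor_{\tau_1}(x,x_0)=b_{\tau_0}(x_0)-b_{\tau_1}(x_0)$, realizes $\tau_1=k\tau_0$ by a rotation $k\in K=G_x$, and then bounds $\dr(x_0,k x_0)<2r$ using the $1$-Lipschitz property of $d_\Delta(x,\cdot)$; you instead pick the witness $y\in B(x_0,r)\cap V(x,\st(\tau))$ directly, apply the rigidity identity at $y$, and close with two triangle inequalities (using the symmetry of $\df$, which indeed follows from $\iota\bar\theta=\bar\theta$, plus the cocycle identity and $|\dhor_\tau|\le\df$). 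Your version avoids the auxiliary $K$-rotation entirely, which makes it a bit more transparent; the paper's version is slightly more in keeping with its other shadow-lemma computations, which also rotate by elements of $K$.
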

\begin{proof}
We recall that $\df$ can alternatively be defined as 
\[
\df(y,z) = \max_{\t\in\Ft}\dhor_{\t}(y,z),
\]
where the maximum above occurs at any point in $S(y: \{z\})$ (see \cite[Sec. 5.1.2]{MR3811766}). Fix some $\t_0\in S(x,\{x_0\})$. Then, for any $\t \in S(x: B(x_0,r))$,
\begin{align*}
|\df(x,x_0) - \dhor_{\t}(x,x_0)|
&=|{b}^\tb_{\t_0}(x_0) - {b}^\tb_{\t}(x_0)|\\
&= |{b}^\tb_{\t_0}(x_0) - b^\tb_{\t_0}(k^{-1}x_0)|\\
&\le \dr(x_0,k^{-1}x_0)
=\dr(x_0,kx_0),
\end{align*}
where $k\in K$, stabilizer of $x$, any element satisfying $\t = k\t_0$. In the above we chose the normalizations of the Busemann functions at $x$.

Let $y\in V(x, \st(\tau))\cap B(x_0,r)$. Then 
$y\in V(x, \sigma)$ for some  chamber $\sigma$  in $\st(\tau)$. We identify $V(x,\sigma)$ with the model Weyl chamber $\Delta$. Let $k_1\in K$  such that 
$k_1 x_0\in V(x,\sigma)$. Then, $k_1\tau_0 = \tau$, and $k_1 x_0=d_\Delta(x,x_0)$ via the identification above. 
Moreover, since the map 
$$
X\to \Delta, \quad z\mapsto d_\Delta(x, z)
$$ 
is 1-Lipschitz (by the triangle inequality for $\Delta$-distances (\ref{eqn:ti})) and $d_\Delta(x, y)=y$, we obtain,
$$
\dr(y, k_1 x_0)\le \dr(y, x_0)\le r
$$
and, in particular, $\dr(x_0, k_1 x_0)\le 2r$. 
\end{proof}

Using the above lemmata, we now complete the proof of Theorem \ref{thm:SL}. 
We first fix some auxiliary quantities. 
Let $q\in(\mathfrak{l},\mathfrak{m})$ and $\ve_0$ be corresponding constant as given in Lemma \ref{lem:SL2}. Let $\d$ be a constant given by Lemma \ref{lem:SL1} which corresponds to $\ve=\ve_0$.
By $\L$ we denote  the  $\d$-neighborhood of $\Lt$
and let 
\[
V = \bigcup_{\t\in\L} V(x,\st(\t))\ \subset X.
\]
Since $\G$ is discrete, the elements of $\G$ which send $x_0$ outside $V$ form a finite set $\Phi$.
Let
\[ r_0 = \max\{r_1, \dr(x,\g x_0) \mid \g\in\Phi\}\]
where $r_1$ is a constant that corresponds to $\ve_0/2$ as in Lemma \ref{lem:SL3}.

For every $\g\in\G$ satisfying $\dr(x,\g x_0)> r \ge r_0$, we assign an element 
$\t_\g \in S(x: \{\g x_0\}) \cap \L$ (the intersection is nonempty by above).
Using Lemma \ref{lem:SL1}, for every such $\t_\g$ there exists $\t_0\in\Lt$ so that
\[
N_{\ve_0/2}(L(\t_\g)) \subset N_{\ve_0}(L(\t_0)).
\]
By Lemmata \ref{lem:SL2} and \ref{lem:SL3}, $\mu_x(S(\g^{-1} x: B(x_0,r))) \ge \mathfrak{m} - q$ and by properties of conformal measures,
\begin{align*}
\mu_x(S(x:B(\g x_0, r))) &= \mu_{\g^{-1}x}(S(\g^{-1} x:B(x_0, r)))\\
 &= \int_{S(\g^{-1} x:B(x_0, r))} \exp\LR{-\beta \dhor_\t(\g^{-1} x,x)}d\mu_x \\
 &\asymp \exp\LR{-\beta\df(x,\g x_0)}
\end{align*}
where in the last step we have additionally used Lemma \ref{lem:SL4}. This completes the proof.
\end{proof}

\section{Dimension of a $\tb$-conformal density}\label{sec:dim}

In this section, we establish a lower bound for the dimension of a $\tb$-conformal density. 
For Anosov subgroups, we prove that the dimension equals the $\tb$-critical exponent (see Corollary \ref{cor:dimAno}).

\begin{thm}\label{thm:lowerbound}
Suppose that $\G$ is a nonelementary $\tmod$-RA subgroup. 
Let $\mu$ be a $\G$-invariant $\tb$-conformal density of dimension $\beta$. 
Then $\beta$ has the following lower bound:
\be\label{eqn:lowerbound}
\beta \ge \dF - \dF^\con.
\ee
\end{thm}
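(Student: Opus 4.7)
The approach is to argue by contradiction using the shadow lemma (Theorem \ref{thm:SL}) together with a Vitali-type covering argument on $\LT$. Suppose for contradiction that $\beta < \dF - \dF^{\con}$, and fix $r \ge r_0$ so the shadow lemma applies. Then for every $\g \in \G$ with $\dr(x,\g x_0) > r$,
$$\mu_x(S(x:B(\g x_0, r))) \asymp \exp(-\beta \df(x,\g x_0)).$$
Since $\mu_x$ is a finite Borel measure on $\LT$, any subfamily $\mc{A}\subset \G$ of orbit points whose shadows cover a set $E \subset \LT$ with bounded multiplicity satisfies $\sum_{\g\in\mc{A}} e^{-\beta \df(x,\g x_0)} \lesssim \mu_x(E) < \infty$.

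The plan is to produce such a subfamily $\mc{A}$ whose partial Finsler Poincar\'e series nevertheless diverges at every exponent $s < \dF - \dF^{\con}$. I would split the orbit sum $g_s^{\F}(x,x_0)$ into a \emph{conical contribution}, indexed by elements $\g$ whose shadow $S(x:B(\g x_0,r))$ meets $\Ltc$ (equivalently, whose $\g x_0$ stays close to a cone $V(x,\st(\t))$ with $\t\in\Ltc$), and a \emph{non-conical remainder}. By the very definition of the conical Finsler exponent $\dF^{\con}$, the non-conical remainder converges for every $s > \dF^{\con}$, while the full series $g_s^{\F}$ diverges for every $s < \dF$; subtracting, the conical contribution must diverge for every $s < \dF - \dF^{\con}$, and in particular at $s = \beta$ under our contradiction hypothesis.

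Applying the shadow lemma to this divergent conical subseries gives
$$\sum_{\g \in \mc{A}} \mu_x(S(x:B(\g x_0, r))) = \infty,$$
while the bounded-multiplicity of this shadow cover of $\Ltc \subset \LT$ would yield $\mu_x(\Ltc) = \infty$. This contradicts finiteness of $\mu_x$, completing the proof.

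The main obstacle will be establishing a uniform multiplicity bound for the cover by shadows $S(x:B(\g x_0, r))$ restricted to the conical limit set: the multiplicity must depend only on $r$ (and the geometry of $X$), not on $\g$. This should follow from a standard argument combining the triangle inequality (\ref{eqn:ti}), the nesting property of $\tmod$-cones, and the convergence action of $\G\acts\LT$ guaranteed by the $\tmod$-RA hypothesis (\cite[Prop. 5.38]{Kapovich:2014aa}). A secondary subtlety is making precise the additive splitting $\dF - \dF^{\con}$ of Poincar\'e exponents; this amounts to verifying that the non-conical piece of the orbit really is controlled by $\dF^{\con}$, essentially by construction.
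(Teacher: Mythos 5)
Your proposal does not work, and the flaw is not the ``obstacle'' you flag at the end but something more basic: the splitting of the Poincar\'e series and the resulting ``subtraction of exponents'' is a non-sequitur, and it rests on a misreading of what $\dF^\con$ measures.

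First, $\dF^\con$ is not the exponent of a ``non-conical remainder'' of the orbit. Looking at Definition \ref{defn:CEcon}, $\dF^\con$ is the supremum over $\t\in\LT$ of the growth rate of orbit points $\g x_0$ that stay within a fixed Riemannian distance of a \emph{single} cone $V(x,\st(\t))$. It quantifies the maximal concentration of the orbit along one conical direction; it is not a partition of $\G$ into ``conical'' and ``non-conical'' elements. Consequently your claim that ``the non-conical remainder converges for every $s>\dF^\con$'' has no basis in the definition. And even if you did have such a dichotomy, from ``$B(s)$ converges for $s>\dF^\con$'' and ``$A(s)+B(s)$ diverges for $s<\dF$'' you can only conclude that $A(s)$ diverges on the interval $(\dF^\con,\dF)$; this does not give divergence at $s=\beta<\dF-\dF^\con$ (indeed such a $\beta$ might well lie below $\dF^\con$). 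Exponents of convergence do not combine by subtraction.

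Second, the uniform multiplicity bound you hope to establish is exactly what fails, and its failure is the whole point of the $\dF^\con$ term. A shadow $S(x:B(\g x_0,r))$ contains $\t$ precisely when the ball $B(\g x_0,r)$ meets the cone $V(x,\st(\t))$, so the multiplicity of the cover of $\LT$ by shadows of orbit points in $B_\F(x,R)$ at the point $\t$ is the number of such balls meeting $V(x,\st(\t))$ --- and by definition this can grow with $R$ at exponential rate up to $\dF^\con$. If the multiplicity were uniformly bounded in $R$, the same argument would give the stronger (and generally false) inequality $\beta\ge\dF$. The paper's actual proof is a direct counting argument (no contradiction): one bounds the orbit count $N_\F(R)$ above by $(\text{multiplicity})\times\mathfrak{m}/s(R)$, where $s(R)\asymp e^{-\beta R}$ is the shadow-lemma lower bound on shadow masses and the multiplicity $n_R$ is controlled by $\dF^\con$; taking $\limsup\log/R$ gives $\dF\le\beta+\dF^\con$. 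To repair your argument you would have to replace the phantom ``uniform multiplicity bound'' with this $n_R$ estimate and abandon the conical/non-conical splitting of the Poincar\'e series entirely.
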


The proof of this theorem is given at the end of this section. 
The number $\dF^\con$ above quantifies the maximal exponential growth rate of the orbit $\G x_0$ in a conical direction. 
The precise definition is given below.

\begin{defn}[Critical exponent in conical directions]\label{defn:CEcon}
Suppose that $\G$ is a $\tmod$-regular subgroup. For $\t\in \LT$, define 
\[
N_\tb^\mathrm{con} (r,c,x,x_0,\t) = \card\{\g\in \G \mid \df(x,\g x_0) < r,\ \dr(\g x_0,V(x,\st(\t))) < c\}
\]
and
\begin{equation}\label{eqn:conical_crit_expo}
\dF^\con(\G) = \sup_{\t\in\LT}\LR{\lim_{c\ra\infty}\LR{\limsup_{r\ra \infty}\frac{\log N_\tb^\mathrm{con} (r,c,x,x_0,\t)}{r}}}.
\end{equation}
\end{defn}

Note that it is sufficient to take the supremum in the definition of $\dF^\con(\G)$ over the conical limit set $\Ltc$. 
For rank-one symmetric spaces, and, more generally, for  $\smod$-regular subgroups, this number is zero. 
This can be seen as follows.
Let $\G$ be a $\smod$-regular discrete subgroup, and $\sigma\in\mathrm{Flag}(\smod)$ be a limit point.
Consider a Riemannian tubular neighborhood $T$ of $V(x,\st(\sigma)) = V(x,\sigma)$. Then the (Riemannian) volume of $T_r := \{ y\in T \mid \df(x,y) < r\}$ grows at most polynomially (of degree equal to the rank of $X$)  with respect to $r$.
Hence the number $N_\tb^\con (r,c,x,x_0,\sigma)$ also grows at most polynomially with $r$. Therefore, the limit in the innermost bracket of (\ref{eqn:conical_crit_expo}) is zero.

Below we see that for $\tmod$-Anosov subgroups also, $\dF^\con(\G) = 0$. 
It should be noted that, however, for general  discrete subgroups, $\dF^\con$ could be $\infty$. 

\begin{prop}\label{prop:CEconAno}
Suppose that $\G$ is a nonelementary $\tmod$-Anosov subgroup. 
Then the function $N(r) = N_\tb^\con (r,c,x,x_0,\t)$ grows linearly with $r$. 
In particular, $\dF^\con(\G) = 0$.
\end{prop}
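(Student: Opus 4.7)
The approach will be to show that the orbit points counted by $N(r)$ lie, up to bounded Riemannian distance, along a single Morse quasi-geodesic, so that linear growth follows from the quasi-isometric embedding of the orbit map together with proper discontinuity. Since $\G$ is $\tmod$-Anosov, the boundary extension of the orbit map gives a homeomorphism $\partial_\infty \G \to \LT$ (cf.\ Corollary \ref{cor:id-bdry}). Let $\xi \in \partial_\infty \G$ be the preimage of $\t$ and fix a geodesic ray $(\g_n)_{n\ge 0}$ in $\G$ (with $\g_0 = e$) converging to $\xi$. By the Morse property and the argument in the proof of Proposition \ref{lem:unifcon}, $(\g_n x_0)$ is a $\tmod$-Morse quasi-geodesic asymptotic to $\t$ which stays within uniformly bounded Riemannian distance of the cone $V(x, \st(\t))$.

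The heart of the argument is a uniform-width lemma: there exists $D = D(c, \G) > 0$ such that for every $\g \in \G$ with $\dr(\g x_0, V(x, \st(\t))) \le c$ and $\df(x, \g x_0)$ sufficiently large, one has $\dr(\g x_0, \g_n x_0) \le D$ for some $n \in \N$. The proof will exploit the hyperbolicity of $(\G x_0, \df)$ established in Corollary \ref{cor:hypano}, whose Gromov boundary is identified with $\LT$. Using the coarse equivalence (\ref{eqn:metricRF}) between $\dr$ and $\df$ on the URU orbit, Riemannian closeness to $V(x, \st(\t))$ translates into a large Gromov product with $\t$ in the Finsler metric on the orbit; the standard fact that geodesic rays to a fixed point in the Gromov boundary of a hyperbolic space are unique up to finite Hausdorff distance then forces proximity to $(\g_n x_0)$.

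Once the uniform-width lemma is available, counting is immediate: since the orbit map is a quasi-isometric embedding, $\df(x, \g_n x_0) \asymp n$, and so $\#\{n : \df(x, \g_n x_0) \le r\} = O(r)$; each Riemannian ball $B(\g_n x_0, D)$ contains boundedly many orbit points by proper discontinuity, hence $N(r) = O(r)$. This yields $\log N(r)/r \to 0$ as $r \to \infty$, and $\dF^\con(\G) = 0$.

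The principal obstacle is the uniform-width lemma. The difficulty is that $V(x, \st(\t))$ is in general higher-dimensional and contains geodesic rays of many $\D$-directions, so Riemannian proximity to the cone does not a priori force proximity to a single ray; overcoming this requires combining the URU condition (restricting $\D$-directions of orbit points to a compact subset of $\ost(\tmod)$) with Morse stability of $\tmod$-regular quasi-geodesics, and then translating the conclusion into the intrinsic hyperbolic geometry of $(\G x_0, \df)$ so that all such orbit points are shown to cluster near $(\g_n x_0)$.
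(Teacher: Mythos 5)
Your proposal is correct, and the global structure matches the paper's: reduce to a uniform-width lemma showing that orbit points within Riemannian distance $c$ of $V(x,\st(\t))$ lie within bounded distance of a single Morse quasiray asymptotic to $\t$, then count along that quasiray using the QI embedding and proper discontinuity. Where you differ is in how you prove the uniform-width lemma. The paper (Lemma \ref{lem:unifcloseAno}) argues directly: if orbit points $x_n$ diverged from the reference quasiray $\alpha$, the Morse property forces $\alpha$ to penetrate the cones $V(x_n,\st(\t))$ far from their tips, yielding orbit points whose $\D$-distance approaches $\partial\st(\tmod)$ at large scale, contradicting uniform $\tmod$-regularity. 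You instead run the argument inside the intrinsic hyperbolic geometry of $(\G x_0,\df)$: Riemannian proximity to the cone forces $\dhor_\t(x,\g x_0)\approx\df(x,\g x_0)$ (the content of Lemma \ref{lem:SL4}), hence a Gromov product $\langle \g x_0\,|\,\t\rangle_x$ within an additive constant of $\df(x,\g x_0)$, and in a hyperbolic space that pins $\g x_0$ to within uniform distance of any quasi-geodesic ray $[x,\t)$ (Morse lemma for hyperbolic spaces). Both routes need the same ingredients (URU for the $\dr$--$\df$ coarse equivalence, Morse/Anosov for stability), but yours outsources the delicate geometry to the already-established hyperbolicity of the Finsler orbit (Corollary \ref{cor:hypano}) and the boundary identification (Corollary \ref{cor:id-bdry}), which is arguably cleaner and more in the spirit of the paper's later arguments (e.g.\ Claim 3 in the proof of Lemma \ref{lem:lebesguedensity} uses exactly this hyperbolicity-plus-Gromov-product device). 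The paper's version is more self-contained in that it uses only the Morse property and regularity. One small point to make explicit if you write it out: the reduction from ``large Gromov product with $\t$'' to ``$\dr$-close to $(\g_n x_0)$'' passes through a point $w$ on a Finsler geodesic ray in the tubular neighborhood $Y$, not on the orbit, so you need to first replace $w$ by a nearby orbit point before invoking the $\dr$--$\df$ coarse equivalence \eqref{eqn:metricRF}.
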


\begin{proof}
Without loss of generality, we can assume that $x= x_0$.\footnote{Note that the number $\dF^\con(\G)$ does not depend on $x$ and $x_0$ as we have seen in the case of $\dF$ in Sec. \ref{sec:CE}.}

\begin{lem}\label{lem:unifcloseAno}
Fix $c>0$. For any $\t\in\LT$, the set 
\[
\{\g x_0 \mid \g\in\G,\ \dr(\g x_0,V(x,\st(\t))) < c\}
\]
is within a uniformly bounded distance from a uniform $\tmod$-Morse quasiray $\alpha$ emanating from $x_0$ and asymptotic to $\t$.
\end{lem}

\begin{proof}
Pick an arbitrary point $\t\in\LT$.
Denote the preimage of $\t$ in $\vb \G$ under the  boundary homeomorphism $\vb\G\to \LT$ by $\zeta$. 
Using discreteness of $\G$, we arrange the elements of the subset $\{\g\in \G \mid \dr(\g x_0,V(x,\st(\t))) < c\}$ in a sequence $(\g_n)$. 
The sequence $x_n= \g_n x_0$ converges conically to $\t$. 
Let $\alpha:\mathbb{Z}_{\ge 0}\ra X$ be the image (under the orbit map $\G\to \G x$)  of a parametrized geodesic ray $\mathbb{Z}_{\ge 0} \ra \G$ starting at $1_\G$ and asymptotic to $\zeta$. 
Then $\alpha$ is a uniform $\tmod$-Morse quasiray starting at $x_0$ and  asymptotic to $\t$. 
Hence $\alpha$ is uniformly close to $V(x_0,\st(\t))$.
Since both sequence $(x_n)$ and $(\alpha(n))$ are uniformly close to $V(x_0,\st(\t))$, it is enough\footnote{For instance, we can consider the nearest point projections of the sequences $\alpha(n)$ and $x_n$ to $ V(x_0,\st(\t))$. The new sequences would be uniformly close to the old ones.  Note that the projection sequence of $\alpha(n)$ is also a uniform $\tmod$-Morse quasiray.} to understand the simpler case when 
 $\alpha(n),\ x_n\in V(x_0,\st(\t))$, for all $n\in \N$.
 
We claim that  the sequence $(x_n)$ is uniformly close to $\alpha$.
Otherwise, after extraction, $(x_n)$ would diverge away from $\alpha$. Since $\alpha$ is a Morse quasiray,
$\alpha$ eventually enters each cone $V(x_n,\st(\t))$, but further and further away from the tip $x_n$ as $n$ grows.
Since the separation between two successive points on $\alpha$ (being a quasigeodesic) is uniformly bounded, we could find arbitrarily large $m$'s such that $\alpha(m)$ is uniformly close to the boundary of a cone $V(x_n,\st(\t))$ and is arbitrarily far away from its tip $x_n$. 
But this would contradict the uniform $\tmod$-regularity of the group $\G$.
\end{proof}

We continue with the notations from the proof of the lemma.
Since any $\tmod$-Anosov subgroup $\G< G$ is uniformly $\tmod$-regular (cf. Theorem \ref{thm:equiv}), we may work with the Riemannian metric $\dr$ in place of the $\df$.
Moreover, we may assume that the sequence $(x_n)$ is sufficiently spaced.
Let $\bar x_n$ denote the nearest-point projection of $x_n$ to the image of $\alpha$.
The above lemma implies that $d(x_n,\bar x_n)$ is uniformly bounded.
Since $x_n$'s are sufficiently spaced, $\bar x_n$'s are also sufficiently spaced which guarantees that $\dr(\bar x_n, x_0) \ge \mathrm{const}\cdot n$, for all large $n$, which in turn implies that $\dr(x_n, x_0) \ge \const \cdot n$.
The proposition follows from this.
\end{proof}

As a corollary of the above results, we obtain that any $\G$-invariant $\tb$-conformal density must have dimension $\dF$ when $\G$ is $\tmod$-Anosov. The Patterson--Sullivan densities constructed in Section \ref{sec:CD} also had this dimension.

\begin{cor}\label{cor:dimAno}
Suppose that $\G$ is a nonelementary $\tmod$-Anosov subgroup. Let $\mu$ be a $\G$-invariant $\tb$-conformal density of dimension $\beta$. Then $\beta = \dF$.
\end{cor}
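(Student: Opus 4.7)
The plan is to establish the two inequalities $\beta \ge \dF$ and $\beta \le \dF$ separately, using the results already at our disposal for Anosov groups.

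For the lower bound $\beta \ge \dF$, the work has essentially been done. Theorem~\ref{thm:lowerbound} furnishes the general estimate $\beta \ge \dF - \dF^\con$ for any nonelementary $\tmod$-RA subgroup, while Proposition~\ref{prop:CEconAno} shows that for $\tmod$-Anosov subgroups the conical critical exponent $\dF^\con$ vanishes (in fact the conical orbit count grows only linearly in $r$). Plugging these together immediately yields $\beta \ge \dF$, so no new argument is required here.

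For the upper bound $\beta \le \dF$, the idea is to use the shadow lemma together with a Borel--Cantelli argument built on the uniform conicality of Anosov subgroups (Proposition~\ref{lem:unifcon}). Take $x = x_0$ and let $r \ge r_0$ be large enough that the shadow lemma (Theorem~\ref{thm:SL}) applies and such that, for every $\tau \in \LT$, there is a sequence $(\gamma_k) \subset \G$ with $\tau \in S(x \colon B(\gamma_k x_0, r))$ for all $k$; such an $r$ exists by uniform conicality. Argue by contradiction: assume $\beta > \dF$. Then the Finsler Poincar\'{e} series $\sum_{\gamma\in\G}\exp(-\beta\df(x,\gamma x_0))$ converges, so by Theorem~\ref{thm:SL}
\[
\sum_{\gamma\in\G} \mu_x\bigl(S(x \colon B(\gamma x_0, r))\bigr) \le C \sum_{\gamma\in\G} \exp\bigl(-\beta\df(x,\gamma x_0)\bigr) < \infty.
\]
By the Borel--Cantelli lemma, the set of $\tau \in \Ft$ belonging to infinitely many shadows $S(x \colon B(\gamma x_0, r))$ has $\mu_x$-measure zero. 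But uniform conicality implies every point of $\LT$ lies in infinitely many such shadows, so $\mu_x(\LT) = 0$. Since $\supp \mu_x \subset \LT$, this forces $\mu_x = 0$, contradicting that $\mu$ is a conformal density. Hence $\beta \le \dF$.

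The main obstacle is conceptual rather than computational: one must verify that the radius $r$ entering the shadow lemma can be chosen simultaneously large enough to make uniform conicality effective, i.e.\ so that every limit point genuinely lies in infinitely many shadows of that fixed radius. This is exactly the content of the uniform conicality property (as opposed to mere conicality), which is why the Anosov hypothesis is crucial; without it one would only be able to conclude $\beta \ge \dF - \dF^\con$, and the upper bound argument would fail since the Borel--Cantelli step requires a single uniform $r$. Combining the two inequalities yields $\beta = \dF$.
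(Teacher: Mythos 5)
Your proof is correct and follows the same route as the paper: the lower bound is obtained by combining Theorem~\ref{thm:lowerbound} with Proposition~\ref{prop:CEconAno} exactly as the paper does, and your Borel--Cantelli argument for the upper bound is precisely the content (and proof) of Corollary~\ref{cor:div}, which the paper then invokes to conclude $\beta \le \dF$. The only difference is that you have inlined the proof of Corollary~\ref{cor:div} rather than citing it.
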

\begin{proof}
Recall that $\G$-invariant $\tb$-conformal densities are, by definition (see Definition \ref{def:conformal_density}), supported in the limit set $\Lt(\G)$.
Since, for $\tmod$-Anosov subgroups\footnote{For $\tmod$-Anosov subgroups $\G$, $\Lt(\G) = \Ltc$ is a consequence of the fact that $\tmod$-Anosov subgroups are also $\tmod$-RCA, see Theorem \ref{thm:equiv}.} \[\Lt(\G) = \Ltc,\] 
by Corollary \ref{cor:div}, we know that the Poincar\'e series $g_\beta^\tb(x,x_0)$ diverges and, consequently, $\beta \le \dF$. The reverse inequality is obtained by a combination of Theorem \ref{thm:lowerbound} and Proposition \ref{prop:CEconAno}.
\end{proof}

To close this section, we prove Theorem \ref{thm:lowerbound}.

\begin{proof}[Proof of Theorem \ref{thm:lowerbound}]

We fix some $r\ge r_0$ where $r_0$ is given by Theorem \ref{thm:SL}.

We first assume that the stabilizer of $x_0$ in $\G$ is trivial in which case the function $N_{\tb}(R,x,x_0)$ counts the number of orbit points (in $\G x_0$) within the $R$-ball in $(X,\df)$ centered at  $x$.

We place a Riemannian ball of radius $r$ at each point in the orbit.
In this proof, we reserve the word {\em ball} to specify these balls.
Let
\[
c = \min_{1_\G\ne \g\in \G} \LRB{\dr(x_0,\g x_0)}.
\]
There exists a number $N\in\mb{N}$ that depends only on $r$, $c$, and $X$ such that any ball intersect at most $N$ other balls (including itself).
Note that the shadows in $\Ft$ (from $x$) of two distinct balls are disjoint unless they intersect some common $\tmod$-cone with tip at $x$.
Also note that, at large distances from $x$, the balls do not intersect the boundaries of the $\tmod$-cones because of the $\tmod$-regularity of the orbit.

Let $n_R$ denote the maximal number of balls in $B_\tb(x,R)$ that intersect a particular $\tmod$-cone $V(x,\st(\t))$.
It follows from the definition of $\dF^\con(\G)$ that 
\be\label{eqn:lowerbound1}
\limsup_{R\ra \infty}\frac{\log n_R}{R} \le \dF^\con(\G).
\ee
On the other hand, for each $\t\in \LT$, the maximal number of balls in $B_\tb(x,R)$ whose shadows intersect $\t$ is $n_R$. Therefore,
\be\label{eqn:lowerbound2}
\frac{N_\tb(R,x,x_0)}{N\cdot n_R}s(R) \le \mathfrak{m} = \text{total mass of }\mu_x,
\ee
where $s(R)$ is any lower bound for the measures of the shadows of balls in $B_\tb(x,R)$.
We note that the shadow lemma (Theorem \ref{thm:SL}) produces such a positive lower bound\footnote{We may need to disregard a finite number of balls from the picture.}, namely, we may take 
$
s(R) = \const\cdot e^{-\beta R}.
$
Then (\ref{eqn:lowerbound2}) yields
\[
N_\tb(R,x,x_0) \le  \frac{\mathfrak{m}N\cdot n_R}{\mathrm{const}}e^{\beta R}.
\] 
Together with (\ref{eqn:lowerbound1}), the above results in (\ref{eqn:lowerbound}).

{In the general case when $\G_{x_0}$ is non-trivial, $\G_{x_0} = K \cap \G$ is still at most finite. In this case, $N_{\tb}(R,x,x_0)$ will be a constant multiple of the number of $\G$-orbit points of $x_0$ within the $R$-ball in $(X,\df)$ centered at  $x$. So, we only need to change the constant term in the above inequality.}
\end{proof}

\section{Uniqueness of $\tb$-conformal density}\label{sec:ergodic}
Recall that an action of a group $H$ on a measure space $(S,\sigma)$ is said to be {\em ergodic} if each $H$-invariant measurable set $B \subset S$ is either null or co-null.
In \cite{MR556586}, Sullivan proved that for a discrete group $\G$ of M\"obius transformations of the Poincare ball $\mathbb{B}^3$, a $\G$-invariant $\tb$-conformal density $\mu$ of non-zero dimension is unique (here and henceforth, by ``unique'' we mean unique up-to a constant factor) in the class of all $\tb$-conformal densities of same dimension if and only if the action $\G$ on the limit set $\L(\G)$ is ergodic with respect to any $\mu_x\in\mu$. See also \cite[{Thm. 4.2.1}]{nicholls1989ergodic}.
Generalizing this statement in our setting, we obtain the following result. The proof is essentially same of Sullivan's theorem, hence we omit the details. 

\begin{thm}\label{thm:ergodic}
Suppose that $\G$ is a nonelementary $\tmod$-RA subgroup. A $\G$-invariant $\tb$-conformal density $\mu$ of dimension $\beta> 0$ is unique in the class of all $\G$-invariant $\tb$-conformal densities of dimension $\beta$ if and only if the action $\G\acts\LT$ is ergodic with respect to any $\mu_x\in\mu$.
\end{thm}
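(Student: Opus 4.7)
The plan is to follow Sullivan's classical argument (cf.\ \cite{MR556586} and \cite[Thm.\ 4.2.1]{nicholls1989ergodic}), exploiting the fact that any two $\G$-invariant conformal densities of the same dimension $\beta$ share the identical conformal cocycle $\exp(-\beta\dhor_\tau(\cdot,\cdot))$, so that Radon-Nikodym derivatives between them are automatically $\G$-invariant.

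For the direction ``uniqueness implies ergodicity,'' I would argue by contrapositive: given a $\G$-invariant Borel set $A\subset \LT$ with $0<\mu_x(A)<\mu_x(\LT)$, define the restricted family $\mu^A$ by $\mu^A_x(B):=\mu_x(B\cap A)$. Since $A$ is $\G$-invariant, the $\G$-equivariance of $\mu$ descends to $\mu^A$, and since \eqref{eqn:RN} is a pointwise condition in $\tau$, it is preserved by restriction. Thus both $\mu^A$ and $\mu-\mu^A$ are nonzero $\G$-invariant $\beta$-conformal densities; if each were proportional to $\mu$ with constants $c_1,c_2\ge 0$, then the relation $c_1+c_2=1$ combined with $\mu^A(\LT\setminus A)=(\mu-\mu^A)(A)=0$ and positivity of $\mu_x$ on both $A$ and $A^c$ would force $c_1=c_2=0$, a contradiction.

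For the reverse direction (ergodicity implies uniqueness), given two $\G$-invariant $\beta$-conformal densities $\mu,\mu'$, I would form the reference measure $\nu:=\mu_x+\mu'_x$ and consider the Radon-Nikodym derivative $f:=d\mu_x/d\nu\colon \LT\to[0,1]$. The shared conformal cocycle cancels in the ratio, giving $f\circ\gamma=f$ as a $\nu$-a.e.\ identity for each $\gamma\in\G$. Modifying $f$ on a $\nu$-null set (possible since $\G$ is countable) yields a genuinely $\G$-invariant representative, so each super-level set $\{f>c\}$ is $\G$-invariant. Ergodicity of $\G\acts\LT$ with respect to $\mu_x$ then forces $f$ to equal some constant $c_0\in(0,1)$ on a set of full $\mu_x$-measure, where $c_0\in(0,1)$ because $\mu_x$ and $\mu'_x$ are both nonzero.

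The main obstacle is upgrading ``$f\equiv c_0$ $\mu_x$-a.e.'' to ``$f\equiv c_0$ $\nu$-a.e.,'' i.e., ruling out a $\mu_x$-singular part of $\mu'_x$ concentrated on the $\G$-invariant set $N:=\{f=0\}$. The key ingredient here is the shadow lemma (Theorem \ref{thm:SL}): applied to each of $\mu$ and $\mu'$, it gives the two-sided estimate $\mu_x(S(x:B(\gamma x_0,r)))\asymp\mu'_x(S(x:B(\gamma x_0,r)))\asymp\exp(-\beta\df(x,\gamma x_0))$ for all sufficiently distant $\gamma\in\G$, with uniform implicit constants. Since these shadows generate the topology of flag convergence on $\LT$, a standard covering argument upgrades this comparison to mutual absolute continuity between $\mu_x$ and $\mu'_x$ on all Borel sets; in particular $\mu'_x(N)=0$, so $f\equiv c_0$ $\nu$-a.e., and proportionality $\mu'_x=\tfrac{1-c_0}{c_0}\mu_x$ follows.
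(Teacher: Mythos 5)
The paper omits the proof of this theorem, referring only to Sullivan \cite{MR556586} and Nicholls \cite[Thm.\ 4.2.1]{nicholls1989ergodic}, so there is no paper proof to compare against directly; your argument follows the expected Sullivan template. The easy direction (uniqueness $\Rightarrow$ ergodicity) via restriction of the density to a $\G$-invariant set of intermediate mass is correct, and you correctly recognize the central subtlety of the converse: ergodicity with respect to $\mu_x$ controls only the $\mu_x$-absolutely continuous part of a second density $\mu'_x$, and one must separately rule out a $\mu_x$-singular component of $\mu'_x$ concentrated on a $\G$-invariant $\mu_x$-null set.

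The gap is in the last step. You assert that, because the shadow lemma (Theorem \ref{thm:SL}) gives $\mu_x(S(x:B(\g x_0,r))) \asymp \mu'_x(S(x:B(\g x_0,r)))$ and shadows generate the topology, ``a standard covering argument'' yields mutual absolute continuity. Comparability of two Borel measures on a family of sets that generates the topology does not by itself imply mutual absolute continuity; one needs a Vitali- or Besicovitch-type covering property for the family of shadows, with bounded overlap, so that a $\mu_x$-small open set can be covered by shadows whose total $\mu'_x$-mass is controlled. Such a covering lemma can plausibly be extracted in the Anosov case (where $\LT=\Ltc$, the Gromov premetric makes $\LT$ a compact metric space, shadows are comparable to metric balls, and the shadow lemma supplies the needed doubling estimate), but it is neither automatic nor ``standard'' for general nonelementary $\tmod$-RA subgroups --- the class for which the theorem is stated. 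In particular, for non-Anosov $\tmod$-RA subgroups the non-conical limit set $\LT\setminus\Ltc$ could carry mass, and shadows of orbit points do not efficiently cover that part of $\LT$. You should either supply the covering lemma explicitly or restrict the statement to the case where it is available. A minor structural remark: once mutual absolute continuity is in hand, the preliminary manipulation with $\nu=\mu_x+\mu'_x$ and $f=d\mu_x/d\nu$ is redundant --- the cocycle cancellation already makes $d\mu'_x/d\mu_x$ a $\G$-invariant function, and ergodicity forces it to be $\mu_x$-a.e.\ constant directly.
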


It is then natural to ask 

\begin{ques}
For which $\tmod$-regular subgroups $\G$, the action $\G \acts \LT$ is ergodic with respect to a conformal measure? 
\end{ques} 

 In this section we prove that the Anosov property is a sufficient condition:

\begin{thm}[Anosov implies ergodic]\label{thm:ergoAno}
Suppose that $\G$ is a nonelementary $\tmod$-Anosov subgroup and $\mu$ be a $\G$-invariant $\tb$-conformal density. 
Then the action $\G\acts\Lt(\G)$ is ergodic with respect to any $\mu_x\in\mu$.
\end{thm}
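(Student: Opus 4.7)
The plan is to adapt Sullivan's classical ergodicity argument for Kleinian groups, combining the shadow lemma (Theorem \ref{thm:SL}), uniform conicality of Anosov subgroups (Proposition \ref{lem:unifcon}), non-atomicity of $\mu$ (Corollary \ref{cor:nonconicalAtom}, applicable since every Anosov limit point is conical), and the convergence-dynamics structure of the action $\Gamma \curvearrowright \Lambda$, where $\Lambda := \LT$. Assume for contradiction that there exists a $\Gamma$-invariant Borel set $B \subset \Lambda$ with $0 < \mu_x(B) < \mu_x(\Lambda)$.

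The first step is a Lebesgue-type differentiation statement for the ``shadow basis''. I would show that, for $\mu_x$-a.e.\ $\xi \in \Lambda$ and any sequence $(\gamma_n)$ in $\Gamma$ converging to $\xi$ conically in the sense of Proposition \ref{lem:unifcon}, i.e., $\dr(\gamma_n x_0, V(x, \st(\xi))) \le r$ for a fixed $r > 0$, one has
\[
\lim_{n \to \infty} \frac{\mu_x(B \cap S_n)}{\mu_x(S_n)} = \mathbf{1}_B(\xi), \qquad S_n := S(x : B(\gamma_n x_0, r)).
\]
The two-sided shadow lemma estimate $\mu_x(S_n) \asymp \exp(-\dF \df(x, \gamma_n x_0))$, extended to shadows along nearby conical directions, yields a doubling / Ahlfors-regularity property for $\mu_x$ against the Gromov metric on $\Lambda$ from Corollary \ref{cor:gmetricano}; the differentiation statement then follows from a standard Vitali-covering argument.

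The second step is to transport the density using the $\Gamma$-action. The identity $\gamma_n^{-1} S_n = W_n := S(\gamma_n^{-1} x : B(x_0, r))$, together with $\Gamma$-invariance $\gamma_n B = B$ and the conformal formula (\ref{eqn:RN}), gives
\[
\mu_x(B \cap S_n) = \int_{B \cap W_n} \exp\!\LR{-\dF \dhor_\tau(x, \gamma_n^{-1} x)} d\mu_x(\tau),
\]
and analogously for $\mu_x(S_n)$ without the $B$-factor. By Lemma \ref{lem:SL4}, the exponent equals $\dF \df(x, \gamma_n x_0) + O(r)$ uniformly on $W_n$, so the common exponential factor cancels in the ratio and one obtains
\[
\frac{\mu_x(B \cap S_n)}{\mu_x(S_n)} = (1 + o(1)) \cdot \frac{\mu_x(B \cap W_n)}{\mu_x(W_n)}.
\]
Because $(\gamma_n^{-1})$ is $\tmod$-regular and $\Gamma \curvearrowright \Lambda$ is a convergence action, after extraction the complements $W_n^c$ shrink to a single simplex $\tau_- \in \Lambda$. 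Non-atomicity of $\mu_x$ then yields $\mu_x(W_n) \to \mu_x(\Lambda)$ and $\mu_x(B \cap W_n) \to \mu_x(B)$, so the right-hand side converges to $\mu_x(B)/\mu_x(\Lambda)$, a value strictly between $0$ and $1$.

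Taking $\xi \in B$ a density point of $B$ forces the limit in the first step to equal $1$, while taking $\xi \in B^c$ a density point of $B^c$ forces it to equal $0$; either conclusion contradicts the value $\mu_x(B)/\mu_x(\Lambda) \in (0,1)$ obtained in the second step. The main obstacle I expect is the rigorous justification of the differentiation statement in the first step: producing a genuine Vitali-covering / doubling property for the conformal measure $\mu_x$ on the compact flag manifold from the shadow lemma, and handling the fact that shadows are not literal metric balls but only comparable to Gromov-metric balls in the sense of Theorem \ref{thm:dg}.
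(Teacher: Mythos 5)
Your overall strategy is the same as the paper's: prove a Lebesgue-type differentiation property for the shadow basis, use $\Gamma$-invariance of $B$ together with conformality (and Lemma~\ref{lem:SL4}) to transport the density ratio to shadows $W_n = S(\g_n^{-1}x : B(x_0,r))$, and then use non-atomicity to see that the $W_n$ exhaust the total mass. The contradiction is phrased slightly differently — the paper shows directly that $\mu_x(B)$ is forced up to the total mass $\mathfrak{m}$, while you derive a contradiction from two different limits of the same ratio — but these are equivalent.

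The genuine gap is precisely the one you flag in your last sentence, and it is not a minor technicality. You propose to obtain the differentiation statement from a doubling/Ahlfors-regularity property of $\mu_x$ derived from the shadow lemma. The paper explicitly declines this route: immediately before the proof of Lemma~\ref{lem:lebesguedensity} it notes that the differentiation statement ``would have followed from a generalization of the Lebesgue density theorem \ldots if we knew that $\mu_x$ is, e.g., a doubling measure. Since this property is unclear, we adopt a more direct approach.'' The problem is that Theorem~\ref{thm:SL} estimates $\mu_x$-masses of \emph{shadows}, not of metric balls; turning shadows into Gromov-metric balls in a uniform enough way to get doubling is a nontrivial claim that the paper never establishes at this stage. (The paper does eventually identify $\mu_x$ with the Hausdorff measure on $(\LT, \dg{x,\e})$, which would yield Ahlfors-regularity and hence doubling — but that identification in Theorem~\ref{thm:hd} uses \emph{uniqueness} of the conformal density, which depends on the ergodicity theorem you are trying to prove, so this route is circular as stated.) The paper's actual proof of the differentiation lemma is a direct maximal-function argument, following Roblin and Link: it constructs a Vitali-type disjointification $\G_{d}^*$ of the group elements by annuli and disjointness of shadows, proves a covering claim (Claim~2) using uniform conicality, the Morse property, and continuity of diamonds, and proves a bounded-multiplicity claim (Claim~3) using Gromov hyperbolicity of the orbit $(\G x_0, \df)$ (Corollary~\ref{cor:hypano}). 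These claims deliver the maximal inequality $\mu_x(\{\Psi^* > \varepsilon\}) \le (\mathrm{const}/\varepsilon) \int \Psi \, d\mu_x$ without any doubling hypothesis. Your proposal, as written, stops short of replicating this machinery and instead asserts the doubling step; filling that in is where the real work lies, and it is not ``standard.''

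A secondary imprecision: in your second step you say the complements $W_n^c$ ``shrink to a single simplex $\tau_-$.'' What actually happens (cf.\ Lemmas~\ref{lem:SL2} and~\ref{lem:SL3}) is that $W_n^c$ is eventually contained in arbitrarily small neighborhoods of $L(\tau_-)$, the set of simplices \emph{not antipodal} to $\tau_-$, which need not be a single point in $\Ft$; it is only the intersection $L(\tau_-) \cap \LT$ that is a single point by antipodality of the Anosov limit set. The conclusion $\mu_x(W_n) \to \mathfrak{m}$ then requires the regularity/continuity argument of Lemma~\ref{lem:SL2}, not bare non-atomicity. This does not change the conclusion, but it is an additional step that needs to be spelled out.
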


As a corollary, we obtain that when $\G$ is $\tmod$-Anosov, then, up to a constant factor, there is exactly one $\G$-invariant $\tb$-conformal density, namely, the Patterson--Sullivan density.

\begin{cor}[Existence and uniqueness of $\tb$-conformal density]\label{cor:uniqueAnosovDensity}
Suppose that $\G$ is a nonelementary $\tmod$-Anosov subgroup. Then, up to a constant factor, there exists a unique $\G$-invariant $\tb$-conformal density $\mu$, namely, the Patterson--Sullivan density of type $\tb$.
\end{cor}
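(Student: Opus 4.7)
The plan is to assemble this corollary directly from the results already developed in the preceding sections: existence comes from the Patterson--Sullivan construction of Section \ref{sec:CD}, while uniqueness is obtained as a formal consequence of Corollary \ref{cor:dimAno}, Proposition \ref{cor:positiveCE}, Theorem \ref{thm:ergoAno} and Theorem \ref{thm:ergodic}.

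First I would settle existence. Since $\G$ is $\tmod$-Anosov, hence uniformly $\tmod$-regular and antipodal, Corollary \ref{cor:div} shows that the Poincar\'e series $g^\mathrm{F}_s(x_0,x_0)$ diverges at $s=\dF$, i.e.\ $\G$ has Finsler divergence type. The Patterson construction of Section \ref{sec:CD} then produces at least one weak limit $\mu$ of the normalized measures $\mu_s$ as $s\searrow\dF$, and by the computations there $\mu$ is a $\G$-invariant conformal density of dimension $\dF$ supported in $\LT$. Hence the family of $\G$-invariant conformal densities is nonempty.

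For uniqueness, I would argue as follows. Let $\mu$ and $\mu'$ be two $\G$-invariant conformal densities, of dimensions $\beta$ and $\beta'$. By Corollary \ref{cor:dimAno}, applied to each of them, we have $\beta=\beta'=\dF$, and by Proposition \ref{cor:positiveCE} we have $\dF>0$, so both densities have the same positive dimension. Theorem \ref{thm:ergoAno} asserts that the action $\G\acts\LT$ is ergodic with respect to any member of $\mu$ (and equally of $\mu'$). Theorem \ref{thm:ergodic} then gives uniqueness (up to a positive multiplicative constant) in the class of $\G$-invariant conformal densities of dimension $\dF$. In particular $\mu'$ is a constant multiple of $\mu$, and in particular any $\G$-invariant conformal density is a constant multiple of the Patterson--Sullivan density constructed above.

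There is no real obstacle here beyond collating the ingredients: the genuine work has already been done in proving the divergence of the Poincar\'e series (which uses the shadow lemma, Theorem \ref{thm:SL}, together with uniform conicality of Anosov subgroups, Proposition \ref{lem:unifcon}), the rigidity of the dimension (which relies on the vanishing $\dF^{\mathrm{con}}(\G)=0$ established in Proposition \ref{prop:CEconAno}), and the ergodicity Theorem \ref{thm:ergoAno}. The only step to treat with some care is the quotation of Theorem \ref{thm:ergodic}: since its hypothesis requires $\beta>0$, it is essential to invoke Proposition \ref{cor:positiveCE} explicitly before applying it. Once that is in place, the statement of the corollary is immediate.
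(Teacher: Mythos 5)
Your proof is correct and follows essentially the same route as the paper's: positivity of $\dF$ from Proposition \ref{cor:positiveCE}, pinning the dimension to $\dF$ by Corollary \ref{cor:dimAno}, then ergodicity (Theorem \ref{thm:ergoAno}) plus the Sullivan-type rigidity (Theorem \ref{thm:ergodic}) to get uniqueness. The paper's actual proof consists only of the uniqueness chain, leaving existence implicit in the Section \ref{sec:CD} construction, so your added existence discussion is sensible. One small logical caveat: you invoke Corollary \ref{cor:div} to deduce divergence at $\dF$ \emph{before} producing a conformal density, but that corollary has a conformal density as a hypothesis. The repair is to run things in the other order: the Patterson construction (using the subexponential weight $h$ if necessary) unconditionally produces some conformal density; Corollary \ref{cor:dimAno} forces its dimension to be $\dF$; and only then does Corollary \ref{cor:div} give divergence of $g^{\mathrm{F}}_{\dF}$, showing a posteriori that the weight $h$ was superfluous. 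With that reordering your argument is airtight.
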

\begin{proof}
First of all, by Proposition \ref{cor:positiveCE}, any such density must have a positive dimension. Secondly, by Corollary \ref{cor:dimAno} this dimension equals to the critical exponent $\dF$. Then the uniqueness follows from the combination of Theorems \ref{thm:ergodic} and \ref{thm:ergoAno}.
\end{proof}

Now we return to the proof of Theorem \ref{thm:ergoAno}. 

\begin{proof}[Proof of Theorem \ref{thm:ergoAno}]
Let $\mu$ be a $\G$-invariant $\tb$-conformal density. Note that the dimension $\beta$ of $\mu$ must be positive (by Proposition \ref{cor:positiveCE} and Corollary \ref{cor:dimAno}).

Let $B$ be a $\G$-invariant Borel subset of $\Lt(\G)$. We need to prove that if $B$ is not a null set, then it is co-null.
From now on, we assume that $B$ is not a null set, i.e., $\mu_x(B)>0$.

We need the following lemmata.

\begin{lem}\label{lem:ergoAno1}
There exists $r_1>0$ such that for every $r\ge r_1$ and every $\g\in\G$, the shadow $S(x, B(\g x_0,r))$ intersects $\Lt(\G)$.
\end{lem}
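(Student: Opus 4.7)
The plan is to show that every orbit point $\g x_0$ lies within a uniformly bounded Riemannian distance from a $\tmod$-cone with apex $x$ pointing at a suitable limit simplex $\t\in\Lt(\G)$. Let me outline the four steps.

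First, fix a word metric on $\G$ and a $\g\in\G$. Since $\G$ is Anosov, hence nonelementary Gromov-hyperbolic, the visual boundary $\vb\G$ is uncountable and every finite geodesic in the Cayley graph can be extended to an infinite one. In particular, I extend the word-geodesic $[1_\G,\g]$ to a geodesic ray $\sigma:\N\to\G$ with $\sigma(0)=1_\G$, $\sigma(n_0)=\g$ for some $n_0$, and $\sigma(n)\to\zeta$ for some $\zeta\in\vb\G$. Via the $\G$-equivariant boundary homeomorphism $\vb\G\to\Lt(\G)$ (cf.\ Corollary \ref{cor:id-bdry}), this $\zeta$ corresponds to a limit simplex $\t\in\Lt(\G)$.

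Second, I apply the Morse property to the orbit map. Since $\G\ra\G x_0$ is a $\tmod$-Morse embedding, the image sequence $(\sigma(n)x_0)_n$ is a uniform $\tmod$-Morse quasi-ray, and for each $m$ the initial segment $(\sigma(0)x_0,\dots,\sigma(m)x_0)$ lies in a uniform $D$-neighborhood of the diamond $\Diamond_\tmod(x_0,\sigma(m)x_0)$. Letting $m\to\infty$ and using the convergence of diamonds to cones as one endpoint flag-converges (since $\sigma(m)x_0\to\t$ in the flag topology and cones are the nested limits of diamonds, cf.\ the discussion of cones and diamonds in Subsection after Definition \ref{defn:fg}), I conclude that there is a uniform constant $D_0$, depending only on the Morse constants of $\G$, such that
\[
\dr\bigl(\sigma(n)x_0,\,V(x_0,\st(\t))\bigr)\le D_0\quad\forall\,n\in\N.
\]
In particular, $\dr(\g x_0,V(x_0,\st(\t)))\le D_0$.

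Third, I change the apex from $x_0$ to $x$. Because $X$ is a $\mathrm{CAT}(0)$ space, for every $\xi\in\vb X$ the Riemannian distance between the geodesic rays from $x_0$ and from $x$ asymptotic to $\xi$ is a convex function of the arclength parameter, bounded by $\dr(x,x_0)$. Hence $V(x_0,\st(\t))\subset N^{\dr}_{\dr(x,x_0)}(V(x,\st(\t)))$. Combining with the previous step gives $\dr(\g x_0,V(x,\st(\t)))\le D_0+\dr(x,x_0)$. Taking $r_1:=D_0+\dr(x,x_0)+1$, one has $V(x,\st(\t))\cap B(\g x_0,r)\ne\emptyset$ for all $r\ge r_1$, i.e.\ $\t\in S(x,B(\g x_0,r))\cap\Lt(\G)$, as required; note $r_1$ depends only on $\G,x,x_0$ and not on $\g$.

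The main obstacle is the limiting argument in step two, i.e.\ the passage from the Morse property on finite segments to uniform closeness of the full quasi-ray to the cone $V(x_0,\st(\t))$. This hinges on the fact, already used implicitly throughout Section \ref{sec:gp}, that the nested family of diamonds $\Diamond_\tmod(x_0,\sigma(m)x_0)$ exhausts the cone $V(x_0,\st(\t))$ as $\sigma(m)x_0$ flag-converges to $\t$; everything else is bookkeeping.
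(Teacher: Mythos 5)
Your proof is essentially the intended one — the paper dismisses this lemma with a one-line appeal to the Morse property, and you have spelled out exactly what that appeal amounts to. The three-step structure (produce a limit simplex $\t\in\Lt(\G)$ associated to a ray in $\G$ through $\g$, use the Morse property to bound $\dr(\g x_0, V(x_0,\st(\t)))$ uniformly, then change apex from $x_0$ to $x$ at the cost of $\dr(x,x_0)$) is correct, and the key ingredient you flag in Step~2 — that a $\tmod$-Morse quasiray from $x_0$ flag-converging to $\t$ is uniformly close to the cone $V(x_0,\st(\t))$ — is precisely the fact the paper also invokes (without proof) in Proposition~\ref{lem:unifcon}; it is available from the Morse lemma of \cite{kapovich2014morse}.

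The one imprecision is in Step~1: it is not literally true that every finite geodesic $[1_\G,\g]$ in the Cayley graph of a hyperbolic group extends to a geodesic ray, because of possible dead ends. What is true, and what you actually need, is a uniform version: there is a constant $C = C(\G)$ so that every $\g\in\G$ lies within word-distance $C$ of some geodesic ray emanating from $1_\G$. (Sketch: by cocompactness of $\G\acts\G$ and nonelementarity, every vertex is within a uniform distance of some bi-infinite geodesic $\ell$ with endpoints $\zeta_\pm\in\vb\G$, and by $\delta$-thinness of the ideal triangle $(1_\G,\zeta_+,\zeta_-)$ each point of $\ell$ is $O(\delta)$-close to $[1_\G,\zeta_+)\cup[1_\G,\zeta_-)$.) Feeding this through the $(L,A)$-quasiisometric orbit map only inflates $D_0$ by a uniform additive constant, so $r_1$ remains independent of $\g$ and the argument stands. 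Also note that to conclude $\t\in S(x:B(\g x_0,r))\cap\Lt(\G)$ you should compare with the definition~\eqref{defn:shadow}: $\t\in S(x:B(\g x_0,r))$ iff $B(\g x_0,r)\cap V(x,\st(\t))\ne\emptyset$, which your estimate $\dr(\g x_0, V(x,\st(\t)))< r$ delivers.
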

\begin{proof}
The proof simply follows from the Morse property of the Anosov subgroup $\G$.
\end{proof}

We assume that the $r_1$ in the lemma also satisfies the ``uniform conicality'' property for $\G$ (cf. Proposition \ref{lem:unifcon}).  

\begin{lem}\label{lem:lebesguedensity}
Let $r\ge \max\{r_0,r_1\}$ where $r_0$ is as in Theorem \ref{thm:SL}.
For $\mu_x$-a.e. $\tau\in B$ and every sequence $(\g_n)$ on $\G$, $\g_n\ra\t$, satisfying $\tau \in S_n := S(x: B(\g_nx_0,r))$, we have
\be\label{eqn:ergodicAction2}
\lim_{n\ra \infty} \frac{\mu_{x}(S_n\cap B)}{\mu_{x}(S_n)} = 1.
\ee
\end{lem}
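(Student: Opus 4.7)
The plan is to establish Lemma \ref{lem:lebesguedensity} as a Lebesgue density theorem for the Radon measure $\mu_x$ on the compact set $\L := \LT$, using shadows as the differentiation basis. The strategy is a contradiction argument combining a Vitali-type covering with outer regularity of $\mu_x$ and the shadow lemma.

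First, I would transfer the problem into the metric setting provided by Section \ref{sec:dg}. By Corollary \ref{cor:gmetricano}, for small $\e>0$ the Gromov premetric $\dg{x,\e}$ restricts to a compatible metric on $\L$. The key preliminary step is to show that every shadow $S_n = S(x : B(\g_n x_0, r))$ containing a point $\t\in\L$ is sandwiched between two Gromov balls of comparable radii,
$$ B_{\dg{x,\e}}(\t,\rho_n^-)\cap\L \ \subset\ S_n\cap\L \ \subset\ B_{\dg{x,\e}}(\t,\rho_n^+)\cap\L, \qquad \rho_n^+ \le C\,\rho_n^-, $$
with a constant $C$ depending only on $r$ and $\G$. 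This follows from Lemma \ref{lem:SL4} (identifying the Finsler with the horospherical distance on shadows up to error $2r$) together with the definition \eqref{eqn:GP} of the Gromov product; uniform $\tmod$-regularity of the $\G$-orbit turns the pointwise estimates into uniform ones. Combined with Theorem \ref{thm:SL}, the sandwich also implies that $\mu_x$ is doubling (in fact Ahlfors $\beta/\e$-regular) on $(\L,\dg{x,\e})$.

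Next, I would suppose the lemma fails and decompose the bad set according to the defect: there would exist $\eta>0$ and a Borel set $E\subset B$ with $\mu_x(E)>0$ such that each $\t\in E$ admits a shadow $S(\t)$ from a bad sequence with $\mu_x(S(\t)\setminus B)\ge \eta\,\mu_x(S(\t))$ and with $S(\t)\cap\L$ of arbitrarily small $\dg{x,\e}$-diameter. Given $\delta>0$, outer regularity of $\mu_x$ produces an open $U\supset E$ with $\mu_x(U\setminus E)<\delta$; since $E\subset B$, this yields $\mu_x(U\setminus B)<\delta$. Because shadows shrink in $\dg{x,\e}$, we can further require $S(\t)\cap\L\subset U$ for every $\t\in E$. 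Applying the Vitali covering theorem in the doubling compact metric space $(\L,\dg{x,\e})$ (the sandwich allows one to treat the shadows as metric balls), one extracts a countable disjoint subfamily $\{S_k\}$ whose bounded enlargements cover $E$. Using doubling of $\mu_x$ on the enlargements,
$$ \eta\,\mu_x(E)\ \lesssim\ \eta\sum_k \mu_x(S_k)\ \le\ \sum_k \mu_x(S_k\setminus B)\ =\ \mu_x\Big(\bigcup_k S_k\setminus B\Big)\ \le\ \mu_x(U\setminus B)\ <\ \delta, $$
and letting $\delta\downarrow 0$ forces $\mu_x(E)=0$, a contradiction.

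The principal obstacle is the preliminary sandwich step: comparing shadows, which are defined via the Riemannian geometry of $X$, with Gromov balls, which are defined via the asymptotic Finsler geometry at infinity. This requires simultaneously exploiting the Morse property of Anosov orbits (Corollary \ref{cor:hypano}) and the uniform $\tmod$-regularity of the $\G$-orbit, together with the additivity of Finsler distances on $\tmod$-cones used already in the proof of Theorem \ref{thm:dg}. Once the sandwich is established, the doubling structure of $\mu_x$ is automatic from Theorem \ref{thm:SL}, and the remaining Vitali/outer-regularity argument is completely standard.
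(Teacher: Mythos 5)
Your proposal follows a route the authors explicitly considered and chose \emph{not} to take. Right before the proof of Lemma \ref{lem:lebesguedensity} the paper remarks that the conclusion would follow from a Lebesgue density theorem if $\mu_x$ were known to be a doubling measure, but that ``this property is unclear,'' and so they ``adopt a more direct approach.'' The paper's proof never establishes doubling; instead it defines a Hardy--Littlewood--type maximal function $\Psi^*$ as in \eqref{eqn:maxfn}, proves the weak $(1,1)$-inequality \eqref{eqn:ergodicAction5} by a greedy covering with shadows indexed directly by group elements (Claims~1--3), and deduces the differentiation statement from that. Your plan, by contrast, is to \emph{first} prove that shadows of orbit points are comparable to Gromov balls and thereby deduce Ahlfors regularity of $\mu_x$ from Theorem \ref{thm:SL}, and \emph{then} run a standard Vitali/outer-regularity argument. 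Structurally this is cleaner and isolates the metric-geometric content into a single ``sandwich'' lemma; the trade-off is that the sandwich lemma does exactly the work that the paper's Claims~2 and~3 do, and those are the hardest steps of the whole argument.

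There is, however, a gap in how you justify the sandwich. You write that it ``follows from Lemma \ref{lem:SL4}\dots together with the definition \eqref{eqn:GP} of the Gromov product,'' with uniform $\tmod$-regularity upgrading pointwise to uniform estimates. That is not enough. Lemma \ref{lem:SL4} controls $\dhor_\tau(x,x_0)$ for $\tau$ in \emph{one} shadow, but the Gromov product $\norm{\t|\t'}_x$ couples \emph{two} flags through a parallel set $P(\t,\t')$; to compare it with $\df(x,\g_n x_0)$ you genuinely need the Gromov hyperbolicity of the orbit $(\G x_0,\df)$ (Corollary \ref{cor:hypano}) together with a fellow-traveling/Morse argument. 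This is precisely the content of Claim~3 in the paper's proof (for the ``upper'' inclusion: comparable centers when two shadows intersect) and of Claim~2 (for the ``lower'' inclusion: a bounded enlargement of a shadow through a nearer orbit point still captures the given flag, via uniform continuity of diamonds from \cite{DKL18}). You correctly flag these ingredients in your last paragraph as ``the principal obstacle,'' but the proposal as written stops there, so the doubling claim — and hence the Vitali argument built on it — is not actually established. If you carry out the sandwich with these tools, the rest of your argument (outer regularity, a $5r$-type covering in the compact metric space $(\L,\dg{x,\e})$, and the $\eta$-defect estimate) is standard and would go through; at that point you would in fact have proven the stronger statement that $\mu_x$ is Ahlfors $\beta/\e$-regular on $(\L,\dg{x,\e})$, which the paper only obtains later and indirectly via the Hausdorff density in Section~\ref{sec:HD}.
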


Assuming this lemma for a moment, we complete the proof of the theorem.
The proof of this lemma is given at the end of this section. Note that, Lemma \ref{lem:ergoAno1} is used to ensure that the ratios in the above lemma are not degenerate.

Let $\tau\in B$ be a {\em density point}, i.e., $\tau$ satisfies (\ref{eqn:ergodicAction2}).
Such point exist by Lemma \ref{lem:lebesguedensity} because $B$ has positive mass.
Note that, $\G$-invariance of $B$ and $\mu$ implies that
\begin{align*}
\frac{\mu_{x}(S(\g_n^{-1} x: B(x_0,r))\cap B)}{\mu_{x}(S(\g_n^{-1} x: B(x_0,r)))}
&=\frac{\mu_{\g_n x}(S_n\cap B)}{\mu_{\g_n x}(S_n)} \\
&= 1- \frac{\mu_{\g_nx}(S_n - B)}{\mu_{\g_nx}(S_n)}\\
&= 1- \frac{\int_{S_n - B} \exp\LR{-\beta \dhor_\t(\g_n x,x)}d\mu_{x}}{\int_{S_n}\exp\LR{-\beta \dhor_\t(\g_n x,x)}d\mu_{x}}\\
&\ge 1 - \const\cdot
\frac{\mu_{x}(S_n - B)}{\mu_{x}(S_n)},
\end{align*}
where the inequality follows by Lemma \ref{lem:SL4}. Together with (\ref{eqn:ergodicAction2}),
we get
\be\label{eqn:ergodicAction3}
\lim_{n\ra\infty}\frac{\mu_{x}(S(\g_n^{-1} x: B(x_0,r)) \cap B)}{\mu_{x}(S(\g_n^{-1} x: B(x_0,r)))} = 1.
\ee

Note that by Corollary \ref{cor:nonconicalAtom}, $\mu$ is atom-free. Therefore, for every $\varepsilon>0$ there exists $r>r_1$ such that
\[
\mu_{x}(S(\g_n^{-1} x: B(x_0,r))) \ge \mathfrak{m}-\varepsilon,
\]
for all large $n$, where $\mathfrak{m}$ denotes the total mass of $\mu_x$. The above follows from the combination of Lemmata \ref{lem:SL2} and \ref{lem:SL3}. 
Therefore, by (\ref{eqn:ergodicAction3}), 
\[
\mu_{x}(B) \ge \lim_{n\ra\infty}\mu_{x}(S(\g_n^{-1} x: B(x_0,r)) \cap B) \ge \mf{m}-\varepsilon,
\]
which holds for every $\varepsilon >0$.
Hence $\mu_{x}(B) = \mathfrak{m}$.
This completes the proof of the theorem.
\end{proof}

Now we prove Lemma  \ref{lem:lebesguedensity}.
The lemma would have followed from a generalization of the Lebesgue density theorem (cf. \cite[Subsec. {2.9.11, 2.9.12}]{MR0257325}) if we 
knew that $\mu_x$ is, e.g., a {\em doubling measure}. Since this property is unclear, we adopt a 
more direct approach. The idea of the proof follows \cite[Subsec. 1E]{MR2057305} (see also \cite[Sec. 3]{Lin06}).

\begin{proof}[Proof of Lemma \ref{lem:lebesguedensity}]
The proof requires a version of the Lebesgue differentiation theorem.

\begin{sublem}\label{sublemma}
For every bounded measurable function $\Phi: \Ft\ra \R_{\ge 0}$,
\[
\Phi(\tau) =
\lim_{n\ra\infty} \frac{1}{\mu_x(S(x: B(\g_nx_0,r)))}\int_{S(x: B(\g_nx_0,r))} \Phi d\mu_x.
\]
for $\mu_x$-a.e. $\tau\in\Lt$ and all $\g_n\in\G$  satisfying $\tau \in  S(x: B(\g_nx_0,r))$.
\end{sublem}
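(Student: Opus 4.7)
The plan is to prove the sublemma as a Lebesgue differentiation theorem adapted to the filtration by shadows. The argument has two layers: first, the claim for continuous $\Phi$ is essentially a shrinking-neighborhood statement; second, the general bounded measurable case is reduced to the continuous case via a weak-type maximal inequality.

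I would begin by recording a shrinking property. Since $\G$ is discrete and each $S(x:B(\g_n x_0,r))$ contains $\t$, a uniform Morse/regularity argument (using that the orbit $\G x_0$ is $\tmod$-Morse by Theorem \ref{thm:equiv} and Theorem \ref{thm:hyp}) forces $\g_n x_0\to\infty$ along a Morse quasi-ray asymptotic to $\t$, so that the shadows $S_n:=S(x:B(\g_n x_0,r))$ shrink to $\{\t\}$ in the topology of $\Ft$; simultaneously, $\mu_x(S_n)\asymp \exp(-\beta\df(x,\g_n x_0))\to 0$ by the shadow lemma (Theorem \ref{thm:SL}). For a continuous $\Phi$, uniform continuity on the compact set $\LT$ combined with this shrinking yields
\[
\left|\frac{1}{\mu_x(S_n)}\int_{S_n}\Phi\, d\mu_x - \Phi(\t)\right|\le \sup_{\s\in S_n\cap\LT}|\Phi(\s)-\Phi(\t)|\xrightarrow{n\to\infty}0.
\]

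For a bounded measurable $\Phi$, I would introduce the shadow maximal function
\[
M\Phi(\t) = \sup_{\g\in\G,\ \t\in S(x:B(\g x_0,r))} \frac{1}{\mu_x(S(x:B(\g x_0,r)))}\int_{S(x:B(\g x_0,r))}|\Phi|\, d\mu_x,
\]
and establish a weak-type $(1,1)$ bound $\mu_x\{M\Phi>\lambda\}\le C\lambda^{-1}\|\Phi\|_{L^1(\mu_x)}$. Granting this, the standard approximation by continuous functions $\Phi_\varepsilon$ in $L^1(\mu_x)$ (available by regularity of $\mu_x$) applied to $|\Phi-\Phi_\varepsilon|$ together with the continuous case and Borel--Cantelli gives the a.e. limit asserted in the sublemma.

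The weak-type inequality reduces to a Vitali-type covering lemma for shadows: from any collection $\{S_\alpha\}$ of shadows covering a set $E\subset\LT$ I extract a (finitely overlapping) subfamily $\{S_{\alpha_i}\}$ satisfying $\sum_i\mu_x(S_{\alpha_i})\ge c\,\mu_x(E)$. The key geometric input is that, by the shadow lemma and the Gromov hyperbolicity of the orbit in the Finsler metric (Corollary \ref{cor:hypano}), shadows $S(x:B(\g x_0,r))$ are comparable to balls in the Gromov premetric $\dg{x,\e}$ of Section \ref{sec:dg}, and behave as a ``doubling'' system on $\LT$: if two shadows $S_1,S_2$ based at $\g_1 x_0,\g_2 x_0$ intersect and $\df(x,\g_1 x_0)\ge \df(x,\g_2 x_0)$, then $S_1\subset S(x:B(\g_2 x_0,r'))$ for some $r'=r'(r)$, and the enlarged shadow has $\mu_x$-mass comparable to that of $S_2$ by Theorem \ref{thm:SL}. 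This ``comparability under enlargement'' is exactly what one needs to run the usual Wiener/Vitali selection argument.

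The main obstacle is precisely the Vitali covering step: unlike Euclidean or doubling metric settings, the shadows are not a priori a geometric family, and their containment behavior has to be deduced from the Morse/Finsler-hyperbolic geometry of $\G x_0$. Once that ``nested containment with bounded mass distortion'' is in place, the shadow lemma turns the combinatorial selection into the desired weak-type estimate, and the rest of the proof is the classical Lebesgue differentiation scheme.
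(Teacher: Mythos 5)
Your proposal matches the paper's argument in structure and in all of its key technical ingredients: the paper likewise defines a maximal-type operator $\Psi^*$, proves a weak-type estimate $\mu_x\{\Psi^*>\varepsilon\}\le (\mathrm{const}/\varepsilon)\int\Psi\,d\mu_x$ via a Vitali-style selection from the family of shadows, and then passes to general bounded measurable $\Phi$ by $L^1$-approximation with continuous functions. The two substitutes for doubling that you isolate — (a) if two shadows of comparable radius intersect and $\df(x,\g_1 x_0)\ge\df(x,\g_2 x_0)$ then the shadow at $\g_1$ is absorbed in an $r'$-enlargement of the shadow at $\g_2$ (proved in the paper via the Morse property, Lemma \ref{lem:unifcloseAno}, and uniform continuity of diamonds), and (b) disjointness/bounded overlap of a selected dyadic subfamily via Gromov hyperbolicity of $(\G x_0,\df)$ (Corollary \ref{cor:hypano}) together with the shadow lemma's two-sided mass bound — are exactly the Claims 2 and 3 appearing in the paper's proof.

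One small caution on wording: you describe the shadows as a ``doubling system,'' whereas the paper explicitly remarks that whether $\mu_x$ is a doubling measure is unclear, and that this is precisely why it takes the direct covering route rather than invoking a ready-made Lebesgue density theorem. Your actual claim (the bounded-enlargement-with-bounded-mass-distortion property) is the right one and is what the paper uses; just avoid asserting doubling of the measure itself. With that caveat, the approach is correct and is essentially the paper's.
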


\begin{proof}
For every bounded measurable function $\Psi: \Ft\ra \R_{\ge 0}$, define a function $\Psi^*$ on $\Ft$ which is zero outside $\LT$ and on $\LT$ it is defined by
\be\label{eqn:maxfn}
\Psi^*(\tau) = \limsup_{N\ra\infty}  \frac{1}{\mu_x(S(x: B(\g x_0,r)))}\int_{S(x: B(\g x_0,r))} \Psi d\mu_x,
\ee
Here and in the following the  limit superior is taken over all $\g\in\G$ satisfying $\dr(x,\g x_0) \ge N$ and $\tau \in  S(x: B(\g x_0,r))$.

Let $\Phi_k$ be a sequence of continuous functions converging to $\Phi$ $\mu_x$-almost surely such that
\[
\int_{\Ft} |\Phi_k - \Phi| d\mu_x <\frac{1}{k}, \quad \forall k\in\N.
\]
Then for every $\tau\in\Ft$ and $\g\in \G$, 
we have
\begin{multline}\label{eqn:ergodicAction4}
\limsup_{N\ra\infty}\bigg|   \frac{1}{\mu_x(S(x: B(\g x_0,r)))} \int_{S(x: B(\g x_0,r))} \Phi d\mu_x - \Phi(\tau) \bigg| \\ 
\shoveleft\qquad\qquad\qquad\qquad\qquad
\le |\Phi - \Phi_k|^*(\tau) + |\Phi_k(\tau) - \Phi(\tau)| \\
+\limsup_{N\ra\infty}\left|   \frac{1}{\mu_x(S(x: B(\g x_0,r)))}\int_{S(x: B(\g x_0,r))} \Phi_k d\mu_x - \Phi_k(\tau) \right| .
\end{multline}
Since $\Phi_n$ are continuous, the last quantity in the right side of the above vanishes.
Moreover, the limit of $|\Phi_k(\tau) - \Phi(\tau)|$ as $k\ra\infty$ vanishes at $\mu_x$-a.e. $\tau\in\Ft$.
Therefore, we only need to control the first term of the right side of (\ref{eqn:ergodicAction4}): We show that, for all bounded nonnegative measurable functions $\Psi$ on $\Ft$ and all $\varepsilon>0$,
\be\label{eqn:ergodicAction5}
\mu_x\LR{\LRB{\Psi^* >\varepsilon}} \le \frac{\const}{\varepsilon}\int_{\Ft}\Psi d\mu_x
\ee
where the constant does not depend on $\varepsilon$ or $\Psi$.
The sublemma follows from this as follows: 
Setting $\Psi = |\Phi - \Phi_k|$ and taking limit as $k\ra \infty$ in (\ref{eqn:ergodicAction5}), we see that $|\Phi - \Phi_k|^*$ $\mu_x$-a.s. converges to zero. Hence left-hand side of (\ref{eqn:ergodicAction4}) also converges to zero for $\mu_x$-a.e. $\tau\in\Lt$.

Now we verify (\ref{eqn:ergodicAction5}). 
Let $\varepsilon>0$ be arbitrary.
For $d\ge 0$, let $\G_d$ be the set of all elements $\g\in\G$ such that $\df(x,\g x_0) \ge d$ and
\be\label{eqn:ergoano12}
\int_{S(x:B(\g x_0,r))} \Psi d\mu_{x} \ge \frac{\varepsilon}{2}\mu_x(S(x:B(\g x_0,r))).
\ee

\begin{customclaim}{1}
 The union of all shadows $S(x:B(\g x_0,r))$ over $\g\in\G_d$ covers $\{\Psi^*>\varepsilon\}$.
\end{customclaim}

\begin{proof}[Proof of claim]
The proof is straightforward.
\end{proof}

We recursively construct a sequence of subsets, $(\G_{d,N})$, of $\G_d$ in the following way: Let $\G_{d,1} = \LRB{ \g\in\G_d \mid 0\le \df(x,\g x_0) < 1}$, and, for $N\ge 2$, define
\[
\G_{d,N} = \LRB{ \g\in\G_d ~ \left| ~ 
\begin{array}{l}
N-1 \le \df(x,\g x_0) < N \text{ and } S(x: B(\g x_0,r))\cap \\
 S(x: B(\phi x_0,r)) = \emptyset, \forall\phi\in \G_{d,1}\cup\dots\cup \G_{d,N-1}
\end{array}
\right.}.
\]
Set $\G^*_{d} = \bigcup_{N\ge 1} \G_{d,N}$.

\begin{customclaim}{2}
There exists a constant $R\ge r$ such that, for every $d\ge 0$, 
\[
{\LRB{\Psi^* >\varepsilon}} \subset \bigcup_{\phi\in\G^*_{d}} S(x: B(\phi x_0,R)).
\]
\end{customclaim}

\begin{proof}[Proof of claim]
It is enough to prove the claim for very large $d$. In fact, we assume that $d$ is so large such that $\rg{x(\g x_0)}$ is uniformly $\tmod$-regular for all $\g\in\G_d$.

Let $\tau\in {\LRB{\Psi^* >\varepsilon}}$ be arbitrary. Then there exists $\g\in\G_d$ such that $\tau\in S(x: B(\g x_0,r))$.
Assume that $\g\not\in\G^*_d$.
By construction of $\G_{d}^*$, there exists $\phi\in\G^*_d$ such that $S(x: B(\g x_0,r))\cap S(x: B(\phi x_0,r)) \ne \emptyset$ and $\df(x,\phi x_0) < \df(x,\g x_0)$.

By Lemma \ref{lem:unifcloseAno}, both $\g x_0$ and $\phi x_0$ stay uniformly close to a $\tmod$-uniform Morse quasigeodesic $\alpha$ with one endpoint at $x$. Since $\df(x,\phi x_0) < \df(x,\g x_0)$, we may assume that the other endpoint of $\alpha$ is uniformly close to $\g x_0$.
It follows that $\phi x_0$ is uniformly close to the diamond $\diamondsuit_{\Theta}(x,\g x_0)$, since $\alpha$ is, for some $\Theta\subset\tmod$.
Pick $y\in B(\g x_0,r)\cap V(x,\st(\tau))$.
Then, by uniform continuity of diamonds (cf. \cite[Thm. 3.7]{DKL18}), for some $\Theta'$ bigger than $\Theta$, $\diamondsuit_{\Theta}(x,\g x_0)$ is contained in a uniform neighborhood of $\diamondsuit_{\Theta'}(x,y)$.
Therefore, $\phi x_0$ is uniformly close to $\diamondsuit_{\Theta'}(x,y)$ and, in particular, to $V(x,\st(\tau))$.
We may choose $R$ to be this upper bound.
\end{proof}

In particular, we get
\be\label{eqn:ergoano11}
\mu_x\LR{{\LRB{\Psi^* >\varepsilon}}} \le \sum_{\phi\in\G^*_{R}} \mu_x\LR{S(x: B(\phi x_0,R))}.
\ee

\begin{customclaim}{3}\label{clm:three}
If $S(x: B(\g x_0,r))\cap S(x: B(\phi x_0,r)) \ne\emptyset$, for $\g,\phi\in\G^*_{d}$, then $\df(\g x_0, \phi x_0)$ is uniformly bounded. 
\end{customclaim}
\begin{proof}[Proof of claim]
This follows from the Gromov hyperbolicity of $(\G x_0,\df)$ (see Corollary \ref{cor:hypano}) and the fact that both $\g x_0$ and $\phi x_0$ lie in an annulus $\{x'\in X \mid N-1 \le \df(x,x')< N\}$ in the following way:
Let $\tau\in S(x: B(\g x_0,r))\cap S(x: B(\phi x_0,r))$. Let $z\in V(x,\st(\tau))$ be a point uniformly close to $\G x_0$.
By $\delta$-hyperbolicity,
\be\label{eqn:ergoano13}
\norm{\g x_0 | \phi x_0}_x + \delta \ge \min\LRB{ \norm{\g x_0 | z}_x , \norm{\phi x_0|z}_x }.
\ee
Expanding the left side, we get
\begin{align}\label{eqn:ergoano10}
\begin{split}
\norm{\g x_0 | \phi x_0}_x + \delta &= \frac{1}{2}\LR{\df(x,\g x_0) + \df(\phi x_0,x) - \df(\g x_0,\phi x_0) }+ \delta\\
&\le  \LR{\df(\phi x_0,x) - \frac{1}{2}\df(\g x_0,\phi x_0) } +\delta+\frac{1}{2},
\end{split}
\end{align}
and expanding the right side, we get
\begin{align*}
\min\LRB{ \norm{\g x_0 | z}_x , \norm{\phi x_0|z}_x } &= 
\min\LRB{
\begin{array}{l}
\frac{1}{2}\LR{\df(x,\g x_0) + \df(z,x) - \df(\g x_0,z)},\\
\frac{1}{2}\LR{\df(x,\phi x_0) + \df(z,x) - \df(\phi x_0,z)}
\end{array}}.
\end{align*}
Taking $z\ra\tau$  in the right side of the last one and using (\ref{eq:B-limit}), we get
\[
\min\LRB{\frac{1}{2}\LR{\df(\g x_0,x) + \dhor_\tau(x,\g x_0)}, 
\frac{1}{2}\LR{\df(\phi x_0,x) + \dhor_\tau(x,\phi x_0)} }.
\]
which, by Lemma \ref{lem:SL4}, is at least
\[
\min\LRB{\df(\g x_0,x), 
\df(\phi x_0,x)} - r \ge \df(\g x_0,x) - r - 1.
\]
Combining this with (\ref{eqn:ergoano13}) and (\ref{eqn:ergoano10}), we get
\[
\df(\g x_0,\phi x_0) \le 2r+ 2\delta +3.
\]
\end{proof}

In particular, for each $\tau\in \mu_x\LR{\LRB{\Psi^* >\varepsilon}}$, $\#\LRB{\phi\in\G_d^*\mid \tau\in S(x: B(\phi x_0,r))}$ is uniformly bounded, say, by $D> 0$.
Therefore,
\be\label{eqn:ergoano16}
\sum_{\phi\in\G^*_{R}} \mu_x\LR{S(x: B(\phi x_0,r))}
\le D\mu_x\LR{ \bigcup_{\phi\in\G^*_{R}} {S(x: B(\phi x_0,r))} }.
\ee

We would like to use the shadow lemma (Theorem \ref{thm:SL}). To this end, we have
\be\label{eqn:ergoano17}
\mu_x\LR{{\LRB{\Psi^* >\varepsilon}}} \le \sum_{\phi\in\G^*_{R}} \mu_x\LR{S(x: B(\phi x_0,R))}
\le C' \sum_{\phi\in\G^*_{R}} \exp \left(-\beta \df(x,\phi x_0)\right)
\ee
where the first inequality is given by (\ref{eqn:ergoano11}) and the last inequality is given by the shadow lemma with $r_0 \le r = R$. Note that the necessary condition $\df(x,\phi x_0)\ge R$ which we needed to apply the shadow lemma in the above follows from the definition of $\G^*_R$.
Moreover, applying shadow lemma again with $r_0\le r = r$, we get another constant $C>0$ such that
\be\label{eqn:ergoano18}
C^{-1}\sum_{\phi\in\G^*_{R}} \exp \left(-\beta \df(x,\phi x_0)\right) \le
\sum_{\phi\in\G^*_{R}} \mu_x\LR{S(x: B(\phi x_0,r))}.
\ee
Combined with (\ref{eqn:ergoano16}), the inequalities in (\ref{eqn:ergoano17}) and (\ref{eqn:ergoano18}) give
\[
\mu_x\LR{\LRB{\Psi^* >\varepsilon}}
\le {DC'C}\mu_x\LR{ \bigcup_{\phi\in\G^*_{R}} {S(x: B(\phi x_0,r))} }.
\]
Finally, the above and (\ref{eqn:ergoano12}) yield
\[
\mu_x\LR{\LRB{\Psi^* >\varepsilon}}
\le \frac{2DC'C}{\varepsilon} \int_{\Ft} \Psi d\mu_{x}.
\]
This proves (\ref{eqn:ergodicAction5}). 
\end{proof}

The proof of the lemma follows from the sublemma by taking $\Phi$ in the sublemma to be the indicator function for $B$.
\end{proof}

\section{Hausdorff density}\label{sec:HD}

In this section, we restrict our attention to Anosov subgroups. Usually, one defines 
Hausdorff measures and Hausdorff dimension for metric spaces. In Appendix \ref{sec:AppA}, 
we verify that the theory goes through for  premetrics  as well. 
Therefore, we choose to work with premetric spaces.
The reader who prefers to work with metrics 
can assume that $\e>0$ in the following is chosen so small so that $\dg{\epsilon}_x$ is bilipschitz equivalent  a metric on $\Lt(\G)$ (cf. Corollary \ref{cor:gmetricano}). 

We fix an $\e>0$. For $\beta\ge 0$, we let $\H_{x}^{\tb,\epsilon,\beta}$ denote the {\em $\beta$-dimensional Hausdorff measure} 
on the {\em premetric space} $(\Lt(\G), \dg{\epsilon}_x)$. These definitions can be found in  Appendix \ref{sec:AppA}.
The {\em Hausdorff dimension} of a Borel subset $B\subset \LT$ is then defined as
\[
\hd^{\tb,\e}(B) = \inf\{\beta\mid \H_{x}^{\tb,\epsilon,\beta}(B) =0\} =\sup\{\beta\mid\H_{x}^{\tb,\epsilon,\beta}(B) =\infty\}.
\]
We observe that if for some $\beta\ge 0$, $\H_{x}^{\tb,\epsilon,\beta}(B)\in(0,\infty)$, then $\hd^{\tb,\e}(B) = \beta$.

\begin{rem}
 The Hausdorff dimension $\hd^{\tb,\e}(B)$ does not depend on the choice of a base-point $x\in X$, although the definition above involved such a basepoint.
 This follows from the fact that for any $x,z\in X$, the identity map
 \[
  \mathrm{id} : (\Lt(\G), \dg{\epsilon}_x) \ra (\Lt(\G), \dg{\epsilon}_z)
 \]
 and its inverse are locally Lipschitz maps. We show this in the proof of the next proposition.
 For this reason, we have dropped the basepoint from the notation of the Hausdorff dimension.
\end{rem}

\begin{prop}\label{prop:hd}
Suppose that for some $\beta\ge 0$
\be\label{eqn:hd}
\H_{x}^{\tb,\epsilon,\beta}(\LT) \in(0,\infty).
\ee
Let $Z = \G x$. Then $\H^{\tb,\epsilon,\beta} = \{\H_{z}^{\tb,\epsilon,\beta}\}_{z\in Z}$ is a  $\G$-invariant $\tb$-conformal $Z$-density of dimension $\beta\e$.
\end{prop}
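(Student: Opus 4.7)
The plan is to verify the three defining properties of a conformal density---support, $\G$-invariance, and conformality with dimension $\beta\e$---for the family $\H^\beta = \{\H_z^\beta\}_{z\in Z}$. The key tool is the transformation rule for the Gromov premetric $\dg{z,\e}$ under change of base point, which follows directly from the cocycle identity (\ref{eqn:cocycle}) for horospherical distance functions.

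First, I would record how $\dg{z,\e}$ depends on $z$. Unpacking Definition \ref{def:dg} via (\ref{eqn:GP}) and applying (\ref{eqn:cocycle}), for any $z,z'\in X$ and any antipodal pair $\t_\pm\in\LT$,
\[
\dg{z',\e}(\t_+,\t_-) = \exp\LR{-\frac{\e}{2}\LR{\dhor_{\t_+}(z',z) + \dhor_{\t_-}(z',z)}} \cdot \dg{z,\e}(\t_+,\t_-).
\]
Letting $\t_-\ra\t_+$ and using continuity of $\t\mapsto b_\t$, as in the derivation of (\ref{eqn:expansion1}), the ratio of premetrics converges at each $\t_+\in\LT$ to the conformal factor
\[
\varphi(\t_+) := \exp\LR{-\e\dhor_{\t_+}(z',z)}.
\]

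Next I would invoke the ``conformal change of metric'' result for Hausdorff measures developed in the appendix: if two premetrics on $\LT$ differ near each point $\t$ by a continuous positive multiplicative factor $\varphi(\t)$, then the associated $\beta$-dimensional Hausdorff measures are mutually absolutely continuous with Radon-Nikodym derivative $\varphi^\beta$. Applied to $\dg{z,\e}$ and $\dg{z',\e}$, this yields
\[
\frac{d\H_{z'}^\beta}{d\H_z^\beta}(\t) = \exp\LR{-\beta\e\dhor_{\t}(z',z)},
\]
which is precisely (\ref{eqn:RN}) with dimension $\beta\e$. Because $\LT$ is compact and $\varphi$ is bounded on it, the hypothesis $\H_x^\beta(\LT)\in(0,\infty)$ propagates to all $z\in Z$, so each $\H_z^\beta$ lies in $M^+(\Ft)$. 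The support condition $\supp(\H_z^\beta)\subset\LT$ is automatic since $\H_z^\beta$ is constructed on $\LT$ using the premetric there.

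For $\G$-invariance, the $G$-equivariance of Busemann functions gives $\dhor_{\g\t}(\g y,\g z) = \dhor_\t(y,z)$ for each $\g\in\G$, and $\g$ sends parallel sets to parallel sets, so $\g:(\LT,\dg{z,\e})\to(\LT,\dg{\g z,\e})$ is a premetric isometry. Hausdorff measures are preserved by premetric isometries, so $\g_*\H_z^\beta = \H_{\g z}^\beta$, or equivalently $\g^*\H_{\g z}^\beta = \H_z^\beta$ in the sense of (\ref{eqn:invariance}). Continuity of $z\mapsto \H_z^\beta$ in the weak topology follows from the continuity of $(z,\t)\mapsto \dhor_\t(z,x)$ together with the Radon-Nikodym formula above. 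The main technical point---the one genuine obstacle---is the appendix-level result invoked in step two: passing from the pointwise limiting conformal factor between two premetrics to a genuine Radon-Nikodym derivative of their $\beta$-Hausdorff measures. This is the standard conformal change of metric argument (near each $\t$ the $\dg{z',\e}$-diameter of small sets is approximated by $\varphi(\t)$ times their $\dg{z,\e}$-diameter, so taking $\delta$-covers and limits produces the density $\varphi^\beta$), but it requires care to run in the premetric setting where the triangle inequality is unavailable.
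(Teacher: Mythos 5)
Your approach matches the paper's, but you mislocate the key step. The ``conformal change of metric'' result for Hausdorff measures that you invoke is \emph{not} developed in the appendix---the appendix only establishes that $\H^\beta$ is a good outer measure on a compact premetric space and that Borel sets are measurable. What you flag as the ``one genuine obstacle'' is in fact the entire content of the paper's proof, and it goes exactly as you sketch but must be carried out: by continuity of the ratio $\dg{y,\e}/\dg{z,\e}$ (which you correctly obtain from the cocycle identity), for each $\t\in\LT$ and $\eta>0$ there is a neighborhood $U_\eta$ of $\t$ on which the identity map $(\LT,\dg{z,\e})\to(\LT,\dg{y,\e})$ is $L_\eta$-Lipschitz with $L_\eta = \exp(-\e\dhor_\t(y,z))+\eta$. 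This gives $\H_y^\beta(B)\le L_\eta^\beta\,\H_z^\beta(B)$ for Borel $B\subset U_\eta$, hence $\H_y^\beta\ll\H_z^\beta$, and letting $\eta\to 0$ and switching the roles of $y$ and $z$ yields the Radon--Nikodym formula. Your worry that the premetric setting makes this delicate is unfounded: the Lipschitz comparison of Hausdorff measures only compares diameters of covering sets and never invokes the triangle inequality, so it runs verbatim for premetrics. Your derivation of $\G$-invariance via the premetric isometry $\g:(\LT,\dg{z,\e})\to(\LT,\dg{\g z,\e})$ is correct and slightly more direct than the paper's, which reads invariance off the conformality formula.
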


\begin{proof}
Let $y,z\in Z$. 
Define a function $f: \LT\times\LT \ra \R_{\ge 0}$ by
\[
f(\t_1,\t_2) =\left\{
\begin{array}{ll}
\frac{\dg{\e}_y(\t_1,\t_2)}{\dg{\e}_z(\t_1,\t_2)}, & \t_1\ne \t_2,\\
\exp\LR{-{\e}\dhor_{\t}(y,z)}, & \t_1 = \t_2 = \t.
\end{array}
\right.
\]
By a calculation similar to the proof of Proposition \ref{prop:dgconf}, we obtain
\[
\lim_{{\t_1,\t_2\ra\t}}\frac{\dg{\e}_y(\t_1,\t_2)}{\dg{\e}_z(\t_1,\t_2)} = \exp\LR{-{\e}\dhor_{\t}(y,z)}
\]
which shows that $f$ is continuous.
For  $\t\in\LT$ and sufficiently small $\eta>0$, let $U_\eta$ be a neighborhood of $\t$ in $\LT$ such that $\forall \t_1,\t_2\in U_\eta$,
\[
{\dg{\e}_y(\t_1,\t_2)} \le \left(\exp\LR{-{\e}\dhor_{\t}(y,z)} + \eta\right){\dg{\e}_z(\t_1,\t_2)}.
\]
Hence the identity map $\id:(\LT,\dg{\e}_z)\ra (\LT,\dg{\e}_y)$ restricted to $U_\eta$ is  $L_\eta$-Lipschitz, where $L_\eta := \exp\LR{-{\e}\dhor_{\t}(y,z)} + \eta$. 
In particular, the map $\id$ is locally Lipschitz.
Therefore, for any $B\in\B(U)$, $\H^{\tb,\epsilon,\beta}_y(B) \le L^\beta_\eta\H^{\tb,\epsilon,\beta}_z(B)$. This also shows that $\H^{\tb,\epsilon,\beta}_{y} \ll \H^{\tb,\epsilon,\beta}_z$. Taking limit as $\eta \ra 0$, we obtain
\[
\frac{d\H^{\tb,\epsilon,\beta}_y}{d\H^{\tb,\epsilon,\beta}_z}(\t) \le \exp\LR{-{\beta\e }\dhor_{\t}(y,z)},
\]
and by switching the role of $y$ and $z$ in the above we also obtain the reverse inequality. Hence
\[
\frac{d\H^{\tb,\epsilon,\beta}_y}{d\H^{\tb,\epsilon,\beta}_z}(\t) = \exp\LR{-{\beta\e }\dhor_{\t}(y,z)}
\]
which proves $\tb$-conformality.
Suppose that $y = \g z$ for some $\g\in \G$. Then 
 for any $B\in \B(\LT)$,
\[
\H_{\g z}^{\tb,\epsilon,\beta}(B) = \int_B \exp\LR{-{\beta\e}\dhor_{\t}(\g z,z)}d\H_{z}^{\tb,\epsilon,\beta}
=\int_B d\LR{\g^*\H_{z}^{\tb,\epsilon,\beta}} = \g^*\H_{z}^{\tb,\epsilon,\beta}(B)
\]
and $\G$-invariance also follows.
Therefore, $\H^{\tb,\epsilon,\beta}$ is a conformal $Z$-density of dimension $\beta\e$.
\end{proof}

\begin{rem}\leavevmode
\begin{enumerate}
\item Note that if such a family $\{\H_z^{\tb,\epsilon,\beta}\mid z\in Z\}$ exists, then it may be extended to a {\em full} $\tb$-conformal density via the correspondence in (\ref{eqn:Adensity}).

\item By the uniqueness of $\tb$-conformal density (Theorem \ref{cor:uniqueAnosovDensity}), the number $\beta$ in Proposition \ref{prop:hd} equals to $\dF/\e$.

\item In the following  we shall see that, indeed, the  $\dF/\e$-dimensional Hausdorff measure $\H^{\tb,\epsilon,\frac{\dF}{\e}}_x$  is finite and non-null (i.e., it satisfies (\ref{eqn:hd})). 
\end{enumerate}
\end{rem}

Next we show that if $\beta = \dF/\e$, then the $\beta$-dimensional Hausdorff measure $\H^{\tb,\epsilon,\beta}_x$ satisfies (\ref{eqn:hd}).
Let us first discuss the simpler case, namely, when the pseudo-metric $\df$ is a metric. There is an abundance of examples when this occurs, e.g., in the case when $X = G/K$ is an irreducible symmetric space (i.e., $G$ is simple).

Let $(Y,d)$ be a proper, geodesic, Gromov hyperbolic metric space and $\G$ be a nonelementary discrete group of isometries acting properly discontinuously on $Y$. Let $\L$ be the limit set of $\G$ in $\vb Y$. Further, assume that $\G$ is {\em quasiconvex-cocompact}, i.e., the quasiconvex hull $\mr{QCH}(\L)$ is nonempty and  $\mr{QCH}(\L)/\G$ is compact. In \cite{MR1214072}, Coornaert proved the following result.
\begin{thm}[{\cite[Cor. 7.6]{MR1214072}}]\label{thm:coor}
Suppose that the critical exponent $\d$ of $\G$ is finite. Then
the  $\d$-dimensional Hausdorff measure on $\L$ with respect to a Gromov metric $D$ is finite and non-null.
\end{thm}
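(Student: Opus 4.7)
The plan is to construct a Patterson-Sullivan conformal density of dimension $\d$ supported on $\L$, upgrade the shadow lemma into Ahlfors $\d$-regularity of this measure on $(\L, \dg{})$, and then conclude the Hausdorff measure assertion by the standard correspondence between Ahlfors-regular measures and Hausdorff measures.

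First I would repeat the Patterson construction of Section \ref{sec:CD} in the simpler setting of a proper geodesic Gromov hyperbolic space: form the normalized Poincar\'e sums $\mu_{y,s} = g_s(y_0,y_0)^{-1}\sum_{\g \in \G} e^{-s\, d(y,\g y_0)} D(\g y_0)$ and take a weak limit as $s \downarrow \d$ (perturbing the weights by a subexponential factor as in the excerpt if $\G$ has convergence type). The output is a $\G$-invariant $\d$-conformal family $\{\mu_y\}_{y \in Y}$ supported on $\L$. Next, one proves a shadow lemma
\[
C^{-1} e^{-\d\, d(y,\g y_0)} \le \mu_y(S(y, B(\g y_0, r))) \le C\, e^{-\d\, d(y,\g y_0)}
\]
for all sufficiently large $r$ and all $\g$ with $d(y, \g y_0)$ sufficiently large. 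The upper bound is immediate from conformality, since the Busemann cocycle $b_\xi(y, \g y_0)$ sits within $2r$ of $d(y, \g y_0)$ whenever $\xi$ is in the shadow. The lower bound requires showing that $\mu_y$ does not concentrate on the complement of the shadow; this rests on non-atomicity of $\mu_y$ (which follows from nonelementarity via a standard contraction-sequence argument as in the proof of Proposition \ref{cor:positiveCE}) together with a covering of $\L$ by shadows of bounded-radius balls around orbit points, whose availability is exactly quasiconvex-cocompactness.

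I would then convert the shadow lemma into Ahlfors regularity. In any Gromov hyperbolic proper geodesic space, a shadow $S(y, B(\g y_0, r))$ coincides, up to multiplicative constants depending only on $r$ and the hyperbolicity constant, with a Gromov metric ball $B_{\dg{}}(\xi_\g, \rho_\g) \cap \L$ of radius $\rho_\g \asymp e^{-d(y, \g y_0)}$ centered at the ideal endpoint $\xi_\g$ of a geodesic ray from $y$ passing close to $\g y_0$ — a routine Gromov-product computation. Combined with the shadow lemma this yields $\mu_y(B_{\dg{}}(\xi, \rho)) \asymp \rho^\d$ for all $\xi \in \L$ and all sufficiently small $\rho$, provided one can exhibit, for each such $(\xi,\rho)$, an orbit point $\g y_0$ whose shadow is located at $\xi$ with radius comparable to $\rho$. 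This is where quasiconvex-cocompactness enters once more: $\G y_0$ is quasidense in the quasiconvex hull of $\L$, so a geodesic ray from $y$ to any $\xi \in \L$ passes within uniformly bounded distance of an orbit point at every depth.

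Finally, Ahlfors $\d$-regularity of the probability measure $\mu_y$ on $(\L, \dg{})$ forces $0 < \H^\d(\L) < \infty$ and $\H^\d \asymp \mu_y$ on Borel sets through the standard two-sided comparison via $5r$-covering and Vitali-type arguments: efficient covers of $\L$ by Gromov balls give $\H^\d(\L) \lesssim \mu_y(\L)$, while a reverse Vitali covering gives $\H^\d(\L) \gtrsim \mu_y(\L)$. The main obstacle, and the point where quasiconvex-cocompactness is genuinely indispensable, is the shadow-to-ball correspondence at every limit point, not merely along a single orbit: without it the lower bound $\mu_y(B_{\dg{}}(\xi,\rho)) \gtrsim \rho^\d$ can fail on the non-conical part of $\L$, so one needs the orbit to shadow all of $\L$ uniformly with controlled radii — precisely what quasiconvex-cocompactness supplies.
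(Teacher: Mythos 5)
The paper does not prove this statement: it is quoted verbatim from Coornaert \cite[Cor.\ 7.6]{MR1214072}, and the authors cite it as a black box. What they actually do in Section~\ref{sec:HD} is set up a proper geodesic Gromov hyperbolic space $(Y,d)$ from the Finsler orbit so that Coornaert's theorem applies, and then transfer the conclusion back to $(\LT,\dg{x,\e})$. Your proposal, by contrast, reconstructs Coornaert's proof itself. As a reconstruction it is the correct route: Patterson-type density on the Gromov boundary, shadow lemma, shadow-to-Gromov-ball comparison, Ahlfors $\d$-regularity, and then the standard two-sided comparison with Hausdorff measure.

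Two imprecisions worth flagging. First, in a general Gromov hyperbolic space the Busemann cocycle is only defined up to a bounded error, so the Patterson construction gives a \emph{quasi}-conformal density, not an exactly conformal one; this is how Coornaert sets it up, and the bounded multiplicative error is harmless but should be stated. Second, you locate quasiconvex-cocompactness in the wrong place when you say the shadow lemma lower bound ``rests on \ldots a covering of $\L$ by shadows of bounded-radius balls around orbit points, whose availability is exactly quasiconvex-cocompactness.'' The shadow lemma holds for general nonelementary discrete groups and needs only that the largest atom has mass strictly less than the total mass together with a compactness/contraction argument (compare Lemmas~\ref{lem:SL2} and~\ref{lem:SL3}, which make no cocompactness hypothesis). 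Quasiconvex-cocompactness is indispensable one step later, as you correctly identify at the end: it supplies orbit points at every depth along geodesic rays to \emph{every} limit point, which is what upgrades the shadow lemma to the two-sided Ahlfors estimate $\mu_y(B_{\dg{}}(\xi,\rho))\asymp\rho^\d$ uniformly over $\xi\in\L$ and small $\rho$. Absent that, the lower Ahlfors bound can fail at non-conical limit points and the Hausdorff measure can be zero or infinite.
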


To apply this theorem to our case, we need an appropriate setup. In Section \ref{sec:hypmorse}, we proved that the orbit $Z = \G x$ is a Gromov hyperbolic space with respect to the Finsler metric $\df$ (cf. Corollary \ref{cor:hypano}) and it is also proper. But $Z$ fails to be geodesic.
This problem can be remedied by taking a uniform neighborhood $Y$ of $Z$ in $X$ such that $Z$ is quasiconvex in $Y$, and then putting the intrinsic path-metric $d$ on $Y$ induced by $\df$ (this requires positivity of $\df$), and finally by completing $Y$ in this metric. Then $(Y,d)$ is proper, geodesic and Gromov hyperbolic. Moreover, $(Y,d)$ and the isometrically embedded $(Z,\df)$ are Hausdorff-close and, in particular, $(Y,d)$ is quasiisometric to $(Z,\df)$ by a $(1,A)$-quasiisometry. This implies that there is a bilipschitz homeomorphism from $\vb Y$ (equipped with the metric $D^\e$ defined by $D^{\e}(\xi_1,\xi_2) = D(\xi_1,\xi_2)^\e$ where $D$ is a Gromov metric on $\vb Y$) to $(\LT, \dg{\e}_x)$. 
Note that the action $\G \acts (Y,d)$ satisfies all the properties needed to apply Theorem \ref{thm:coor}.
Therefore, by this theorem the $\dF/\e$-dimensional Hausdorff measure on $\vb Y$ (and, consequently, also on $\LT$) is finite and non-null.

In the general case where the positivity of $\df$ is unknown, the above argument still
works after some modifications. Let us go back to our construction in the above paragraph. Let $Y$ be a uniform Riemannian neighborhood of $Z$ in which $Z$ is quasiconvex w.r.t. $\df$. Define a new $\G$-invariant metric  ${\bar d}_{\tb}$ on $Y$ by
\[
{\bar d}_{\tb}(y,z) = \max\big\{ \df(y,z),\ \ve\dr(y,z)\big\}, \quad\forall y,z\in Y
\]
where $\ve>0$ is some number that is strictly lesser than $L^{-1}$ given in (\ref{eqn:metricRF}). Note that for $y,z\in Z$, if $\df(y,z)$ is sufficiently large, then ${\bar d}_{\tb}(y,z) = \df(y,z)$. Moreover, for a given $\i$-invariant compact subset $\Theta\subset \ost(\tmod)$ and a possibly smaller $\ve$ (depending on the choice of $\Theta$), any $\Theta$-Finsler geodesic (see Definition \ref{defn:fg}) connecting these two points remains  a geodesic in this new metric. In other words, $Z$ remains quasiconvex in $Y$ with respect to ${\bar d}_{\tb}$.

Observe that the identity embedding $(Z,\df) \ra (Y,{\bar d}_{\tb})$ is a $(1,A)$-quasi\-isometric embedding for some large enough $A$ and the image is Hausdorff-close to $Y$. 
Therefore, in this case also we get a natural identification of the Gromov boundaries of $(Z,\df)$ and $(Y,{\bar d}_{\tb})$.
Next, considering intrinsic metrics, we complete $Y$ as before to get a proper, geodesic, Gromov hyperbolic space $(Y,d)$. The rest of the argument works as before.

Using Proposition \ref{prop:hd} together with the remark after the proposition, we obtain the following result.

\begin{thm}\label{thm:hd}
Suppose that $\G$ is a nonelementary $\tmod$-Anosov subgroup of $G$. 
If $\beta = \dF/\e$, then the $\beta$-dimensional Hausdorff density $\H^{\tb,\epsilon,\beta} = \{\H_{z}^{\tb,\epsilon,\beta}\mid z\in X\}$ is a $\G$-invariant $\tb$-conformal density of dimension $\dF$. 
In particular, the Hausdorff dimension with respect to the metric $\dg{\e}_x$ satisfies
\[
\hd^{\tb,\e}(\LT) = \frac{\dF}{\e}.
\]

Moreover, $\H^{\tb,\epsilon,\beta}$ equals to a non-zero multiple of the Patterson--Sullivan density of type $\tb$ corresponding to $\G$. \end{thm}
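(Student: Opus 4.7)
The strategy is to reduce the theorem to Coornaert's result (Theorem \ref{thm:coor}) combined with Proposition \ref{prop:hd}. By Proposition \ref{prop:hd}, it suffices to verify that $\mathcal{H}^{\beta}_x(\LT) \in (0,\infty)$ for $\beta = \dF/\e$, because then $\mathcal{H}^\beta$ is automatically a $\G$-invariant conformal $Z$-density of dimension $\beta\e = \dF$, which implies $\hd(\LT) = \dF/\e$ by the characterization of Hausdorff dimension. The ``moreover'' statement will then follow directly from the uniqueness Corollary \ref{cor:uniqueAnosovDensity}: any extension of $\mathcal{H}^\beta$ to an $X$-density is forced to be a constant multiple of the Patterson-Sullivan density constructed in Section \ref{sec:CD}.

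The central task is therefore to produce a proper, geodesic, Gromov hyperbolic space on which $\G$ acts in a quasiconvex-cocompact fashion, whose Gromov boundary is bi-Lipschitz equivalent to $(\LT,\dg{x,\e})$, so that Coornaert's Theorem \ref{thm:coor} is applicable. The natural candidate is the orbit $(Z,\df)$ with $Z = \G x$, which by Corollary \ref{cor:hypano} is Gromov hyperbolic and whose Gromov boundary is identified with $\LT$. The defect is that $Z$ is discrete (hence not geodesic). To fix this, I will take a Riemannian tubular neighborhood $Y$ of $Z$ in $X$ of sufficiently large radius so that $Z$ is Finsler quasiconvex in $Y$ (this is provided by the Morse property), equip $Y$ with the intrinsic path-metric $d$ induced by $\df\big|_Y$, and complete. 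The resulting $(Y,d)$ is proper, geodesic, and Gromov hyperbolic; the inclusion $(Z,\df)\hookrightarrow (Y,d)$ is a $(1,A)$-quasiisometry with Hausdorff-close images, so the identification $\partial_\infty Z\cong \partial_\infty Y$ is bi-Lipschitz with respect to the associated Gromov metrics $\dg{x,\e}$.

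The main obstacle is that in general $\df$ is only a pseudo-metric, so the path-metric construction above cannot be performed directly. I would handle this by modifying the metric on $Y$ as in the paragraph preceding the theorem: set
\[
\bar\df(y,z) = \max\{\df(y,z),\ \ve\dr(y,z)\},
\]
where $\ve>0$ is chosen smaller than $L^{-1}$ from \eqref{eqn:metricRF} and small enough (depending on a fixed $\i$-invariant compact $\Theta\subset\ost(\tmod)$ containing the types realized by $\G$) that any $\Theta$-Finsler geodesic through points of $Z$ remains a $\bar\df$-geodesic. Then $\bar\df$ is a genuine $\G$-invariant metric, $(Z,\df)\hookrightarrow (Y,\bar\df)$ is still a $(1,A)$-quasiisometry onto a quasiconvex subset, and after passing to the intrinsic path-metric and completing, the preceding argument goes through. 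The comparison in \eqref{eqn:metricRF} ensures that the Finsler critical exponent of $\G$ computed in $(Y,\bar\df)$ coincides with $\dF$.

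With this setup, Coornaert's Theorem \ref{thm:coor} applies to the $\G$-action on $(Y,\bar\df)$ (quasiconvex-cocompactness follows from the Morse property of the Anosov subgroup) and yields that the $\dF$-dimensional Hausdorff measure on $\partial_\infty Y$ with respect to a Gromov metric is finite and non-null. Transporting this via the bi-Lipschitz identification $\partial_\infty Y \cong (\LT,\dg{x,\e})$, and using the relation between the exponents in the two Gromov metrics, gives $\mathcal{H}^{\dF/\e}_x(\LT)\in(0,\infty)$. Proposition \ref{prop:hd} then promotes $\mathcal{H}^{\dF/\e}$ to a $\G$-invariant conformal density of dimension $\dF$, Corollary \ref{cor:uniqueAnosovDensity} identifies it with a multiple of the Patterson-Sullivan density, and the Hausdorff-dimension assertion follows from the fact that a Borel set carrying a finite non-zero Hausdorff measure in one dimension has exactly that Hausdorff dimension.
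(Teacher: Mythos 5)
Your proposal is correct and follows essentially the same route as the paper: reduce via Proposition \ref{prop:hd} to showing the Hausdorff measure is finite and non-null, construct a proper geodesic Gromov hyperbolic model space as a completed path-metric thickening $Y$ of the orbit (using the modified metric $\bar\df = \max\{\df,\ve\dr\}$ to handle the pseudo-metric case), apply Coornaert's Theorem \ref{thm:coor}, transfer via the bi-Lipschitz identification of boundaries, and finish with Corollary \ref{cor:uniqueAnosovDensity}. The details, including the choice $\ve < L^{-1}$ and the $(1,A)$-quasiisometry observation, match the paper's argument.
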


We have mostly completed the proof of this theorem. The only remaining ``moreover'' part follows from the uniqueness of the $\G$-invariant $\tb$-conformal density (Theorem \ref{cor:uniqueAnosovDensity}).

\begin{cor}
Let $\G$ be a $\tmod$-Anosov subgroup of $G$. For all $x\in X$, the Hausdorff dimension of the premetric space $(\LT, \dg{1}_x)$ equals to $\dF$.
\end{cor}

\section{Examples}\label{sec:exPAR}

\subsection{Product of two hyperbolic planes}\label{sec:product}

Let $\G_1,\ \G_2$ be isomorphic discrete cocompact subgroups of $\PSL{2}$ where the isomorphism is given by $\phi :\G_1\ra \G_2$.
We let $f: S^1\to S^1$ be the equivariant homeomorphism of ideal boundaries of hyperbolic planes determined by $\phi$. 

The discrete subgroup 
$$
\G = \{ (\g_1,\phi\g_1) \mid \g_1\in \G_1 \} < G=\PSL{2}\times \PSL{2}$$
acts  on $X= \mb{H}^2\times \mb{H}^2$ as a $\smod$-Anosov subgroup. (This follows, for instance, from the fact that $\G$ is an URU subgroup of $G$.) The $\smod$-limit set of $\G$ in the full flag manifold $S^1\times S^1$ equals the graph of the map $f$.

We denote $d_1$ (resp. $d_2$) the distance functions of the constant $-1$ curvature Riemannian metrics on the first (resp. second) factor of the product $\mb{H}^2\times \mb{H}^2$. 

Unlike in section \ref{sec:dg}, we work with the Finsler metric on $\mb{H}^2\times \mb{H}^2$ given by 
\begin{equation}\label{eqn:sec:product}
\df((x_1,x_2),(y_1,y_2)) = \frac{d_1(x_1,y_1) + d_2(x_2,y_2)}{2}.
\end{equation}
Basically, we have multiplied  the distance function in (\ref{eqn:dfrank1}) corresponding to $\tb = (\frac{1}{\sqrt{2}},\frac{1}{\sqrt{2}})$, for $p=2$, by a factor $1/\sqrt 2$ in order to avoid cumbersome radical constants.

By the formula of the Gromov predistance (\ref{eqn:dgrank1}), for $\epsilon=1$ and $x=(x_1,x_2)$, 
$\dg{1}_x(\tau_+,\tau_-)$ is bilipschitz equivalent to 
the product
$$
\sqrt{\alpha_1 \alpha_2}, 
$$ 
where $\tau_\pm= (\xi_1^\pm, \xi_2^\pm)$ and $\alpha_i$ is the angle between $\xi_i^+, \xi_i^-$ as measured from $x_i$, $i=1,2$.

By \cite[{Thm. 2 \& 3}]{MR1208564} we note that the $\tb$-critical exponent $\dF$ of $\G$ is at most $1$.
This can also be obtained by comparing the Hausdorff dimensions as follows.
Note that by the formula of the Gromov predistance, the identity map 
$$
(S^1\times S^1,\rho) \to (\Flag(\smod), \dg{1}_x)
$$
is Lipschitz, where  $\rho$ is a Riemannian distance function on  
$S^1\times S^1= \vb\mb{H}^2\times \vb\mb{H}^2$.
Moreover, the limit set of $\G$ in $S^1\times S^1$ is the graph of a BV (i.e., bounded variation) function, hence, is a rectifiable curve, and, thus, 
has Hausdorff dimension $1$ with respect to $\rho$. 
Consequently, with respect to $\dg{1}_x$, $\hd(\L_\smod(\G)) \le 1$.  By Theorem \ref{thm:hd}, $\dF \le 1$ as well.

Moreover, by \cite[{Thm. 2}]{MR1208564} and Corollary \ref{cor:div}, $\dF = 1$ if and only if $\phi$ is induced by an isometry of $\mb{H}^2$, equivalently, $f$ is 
a M\"obius transformation.

We further note that one can use \cite{MR1230298} as an alternative argument for both inequality and the equality case.

\subsection{Hilbert entropy of projective Anosov representations}\label{sec:projano}
A subgroup $\G < \SL{k+1}$, $k\ge 2$, is called {\em projective Anosov} if it is $\tmod$-Anosov for $\tmod = (1,k)$ (see Examples \ref{ex:2}, \ref{ex:2.1}, and \ref{ex:2.3} for notations). The $\tb$-critical exponent associated to the unique $\i$-invariant type 
$$\tb = \left(\frac{1}{2\sqrt{k+1}}, 0 , -\frac{1}{2\sqrt{k+1}}\right)$$  in $\tmod$ will be denoted, as usual, by $\dF$.

Let $\G < \SL{k+1}$ be a projective Anosov subgroup. In \cite{GMT19}, the authors defined the following two critical exponents of $\G$, namely, the {\em Hilbert critical exponent} (corresponding to the sum of all simple roots)
\[
\d_{1,k+1} = \limsup_{r\ra\infty} \frac{\log \card \{\g\in \G \mid \sigma_1(\g) - \sigma_{k+1}(\g)< r\}}{r}
\]
and the {\em simple root critical exponent} (corresponding to the first simple root)
\[
\d_{1,2} = \limsup_{r\ra\infty} \frac{\log \card \{\g\in \G \mid \sigma_1(\g) - \sigma_2(\g)< r\}}{r}.
\]

A direct computation yields 
$$\sqrt{k+1}\dF = \d_{1,k+1}, $$
which follows from the formula of $\df$ given by (\ref{eq:ex:2.1}).
Also note that (by (\ref{eqn:ex2.3})) for a pair of partial flags $(l_1, h_1), (l_2,h_2) \in\Ft$,
\begin{equation}\label{eqn:ineq_proj_gromov_distance}
\dg{{1}/{\sqrt{k+1}}}_x\LR{(l_1, h_1), (l_2,h_2)} = \sqrt{\sin\angle(l_1,h_2)}\sqrt{\sin\angle(l_2,h_1)}  \le \sin\angle(l_1,l_2), 
\end{equation}
where the right side equals the distance (with respect to the constant curvature Riemannian metric on $\R P^k$ determined by $x\in X$) between the points $l_1,l_2$ in $\R P^{k}$.
This together with Theorem \ref{thm:hd} implies that
\be\label{eqn:PAlb}
\d_{1,k+1} = \sqrt{k+1}\dF=\hd^{\tb,1/\sqrt{k+1}}(\LT) \le \hd^\mr{Riem}(\xi^1(\vb \G)),
\ee
where $\xi^1: \vb \G \ra \R P^{k}$ is the $\G$-equivariant embedding\footnote{Composition of the $\G$-equivariant  boundary embedding $ \vb \G\ra\Ft$ and the projection map $\Ft \ra\R P^{k} = \gr_1(\R^{k+1})$.}
of $\vb\G$ into $\R P^{k}$
and $\hd^\mr{Riem}$ denotes the Hausdorff dimension with respect to the Riemannian metric.

The critical exponent $\delta_{1,2}$ is known to give an upper bound for $\hd^\mr{Riem}(\xi^1(\vb \G))$ (see \cite[Prop. 4.1]{Pozzetti:2019aa} or \cite[Thm. 4.1]{GMT19}).
By above, we obtain a lower bound.

\begin{thm}\label{thm:PARhd}
Let $\G<\SL{k+1}$ be a projective Anosov subgroup. Then
\[
\d_{1,k+1} \le \hd^\mr{Riem}(\xi^1(\vb \G)) \le \dim \mathbb{R}P^{k} = k.
\]
\end{thm}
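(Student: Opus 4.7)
The plan closely follows the road map sketched in the paragraph preceding the theorem, assembling three previously established pieces into the lower bound and quoting the literature for the upper bound.

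For the lower bound $\d_{1,k+1} \le \hd_\mr{R}(\xi^1(\vb\G))$, I would combine three ingredients. First, Theorem~\ref{thm:hd} applied with $\e = 1/\sqrt{k+1}$ identifies the Hausdorff dimension of $\LT$ with respect to the Gromov premetric $\dg{x,1/\sqrt{k+1}}$ as $\dF/\e = \sqrt{k+1}\,\dF$. Second, the Finsler metric formula~\eqref{eq:ex:2.1} from Example~\ref{ex:2.1} gives $\df(x,gx) = \sqrt{k+1}\bigl(\sigma_1(g) - \sigma_{k+1}(g)\bigr)$, so comparing the corresponding Poincar\'e series directly yields $\sqrt{k+1}\,\dF = \d_{1,k+1}$. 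Third, starting from the explicit expression~\eqref{eqn:ex2.3} for the Gromov predistance and using the elementary inequality $\sin\angle(l', h) \le \sin\angle(l', l)$, valid for any line $l'$ and any line-hyperplane pair $l \subset h$ in $\R^{k+1}$, one obtains
\[
\dg{x,\,1/\sqrt{k+1}}\bigl((l_1,h_1),(l_2,h_2)\bigr) \;\le\; \sqrt{\sin\angle(l_1,l_2)\,\sin\angle(l_2,l_1)} \;=\; \sin\angle(l_1,l_2),
\]
and $\sin\angle(l_1,l_2)$ is bi-Lipschitz equivalent to the Riemannian distance $\rho_\mr{R}(l_1, l_2)$ on $\R P^k$ determined by $x$. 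Consequently the restriction of $\xi^1$ to $\LT$ admits a $1$-Lipschitz inverse (up to a bi-Lipschitz factor on the target), so Hausdorff dimension cannot decrease under $\xi^1$, yielding $\d_{1,k+1} = \hd(\LT) \le \hd_\mr{R}(\xi^1(\vb\G))$.

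For the upper bound $\hd_\mr{R}(\xi^1(\vb\G)) \le \d_{1,2}$, no new argument is needed: this is precisely the content of \cite[Prop.~4.1]{Pozzetti:2019aa} (equivalently, \cite[Thm.~4.1]{GMT19}), which I would simply cite.

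I do not expect any substantive obstacle. The only delicate point is the bookkeeping for the scaling: the factor $\sqrt{k+1}$ appearing in the Finsler metric~\eqref{eq:ex:2.1} via the choice of $\tb$ must be matched against the choice $\e = 1/\sqrt{k+1}$ in Theorem~\ref{thm:hd} so that the two factors cancel, producing the clean identification $\hd(\LT) = \d_{1,k+1}$. Once this is in place, the lower bound is a one-line consequence of the fact that $1$-Lipschitz maps do not decrease the Hausdorff dimension of the image, and the upper bound is quoted directly.
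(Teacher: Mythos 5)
Your proof is correct and follows essentially the same route as the paper's: identify $\hd(\LT, \dg{x,1/\sqrt{k+1}})$ with $\sqrt{k+1}\,\dF = \d_{1,k+1}$ via Theorem~\ref{thm:hd} and the formula \eqref{eq:ex:2.1}, bound the Gromov predistance by $\sin\angle(l_1,l_2)$ using \eqref{eqn:ex2.3}, conclude the lower bound since Lipschitz maps do not increase Hausdorff dimension, and quote \cite[Prop.~4.1]{Pozzetti:2019aa} or \cite[Thm.~4.1]{GMT19} for the upper bound. One small thing you add that the paper leaves implicit: the justification of the pointwise inequality $\dg{x,1/\sqrt{k+1}} \le \sin\angle(l_1,l_2)$ via the elementary bound $\sin\angle(l',h)\le\sin\angle(l',l)$ for $l\subset h$, which is a nice clarification of a step the paper only asserts.
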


\begin{rem}
If one considers the $\tmod$-flag limit set $\Lt$ of $\G$ in the  flag manifold $\Ft$, then a similar lower bound can be obtained for the Hausdorff dimension corresponding to the Riemannian metric. Note that (\ref{eqn:ineq_proj_gromov_distance}) also holds if one replaces $\sin\angle(l_1,l_2)$ by $\sin\angle(h_1,h_2)$. Hence, 
\begin{equation}\label{eqn:dim_flag_limit_set}
\dg{{1}/{\sqrt{k+1}}}_x\LR{(l_1, h_1), (l_2,h_2)} \le \sqrt{2\sin^2\angle(l_1,l_2) + 2\sin^2\angle(h_1,h_2)}
\end{equation}
which shows that the identity map $(\Lt,\rho) \ra (\Lt,\dg{{1}/{\sqrt{k+1}}})$ is a Lipschitz map, where $\rho$ is a(ny) Riemannian distance function in $\Ft$. Therefore,
\[
 \d_{1,k+1} \le \hd^\mr{Riem}(\Lt).
\]
This recovers the lower bound obtained in \cite[Cor. 1.2]{GM16}.
\end{rem}

\subsection{Hilbert entropy of $\PSL{3}$-Hitchin representations}
In suitable projective Anosov classes, one may be able to improve the inequality in (\ref{eqn:ineq_proj_gromov_distance}) to get a better bound for the Hilbert critical exponent $\delta_{1,k+1}$. Here we present an example of such an improvement. 

 Let $\G = \pi_1(S)$ be a surface\footnote{More precisely, $S$ is a closed surface of genus $g\ge 2$.} group. By \cite{choi-goldman}, the $\PSL{3}$-Hitchin representations $\rho : \G \ra \PSL{3}=\SL{3}$ consist of holonomies of convex $\R P^2$-structures in $S$.
In particular, $\rho(\G)$ preserves a properly convex (open) domain $\Omega$ in $\R P^2$ with ($C^1$-) boundary $\partial \Omega = C$, and the action $\G \acts \Omega$ is properly discontinuous.

Since Hitchin representations are $P_1$-Anosov, Theorem \ref{thm:PARhd} shows that
\[
 \delta_{1,3}(\rho(\G)) \le 2.
\]
However, this {\em most general} upper bound is weak for the Hitchin representations. In Proposition \ref{prop:Hitchin-critical} below we will obtain a stronger bound. 

Let $\tmod = (1,2)$.
The $\tmod$-flag limit set of $\rho(\G)$ can be equivariantly identified with the set  of flags $\{(\xi,\xi^*) \mid \xi \in C\}$, where $\xi^*\subset \R P^2$ is the line tangent to $C$ through $\xi$. We denote the dual curve $\{\xi^* \mid \xi\in C \}\subset (\R P^2)^*$ by $C^*$; this curve $C^*$ also bounds a properly convex domain in $(\R P^2)^*$.

\begin{claim}
 There exists a constant $L$ such that, for every $\xi,\eta\in C$,
\begin{equation}\label{eqn:hit_three_gromov}
D(\xi,\eta):=\sqrt{d_{\R P^2}(\xi,\eta^*)d_{\R P^2}(\eta,\xi^*)} \le {L}\cdot  d_{\R P^2}(\xi,\eta)d_{(\R P^2)^*}(\xi^*,\eta^*).
\end{equation}
\end{claim}

\begin{proof}
 The curve $C$ is a simple closed $C^1$-curve in an affine chart $\R^2$, bounding the convex subset $\Omega$ above.  Let $d_{\R^2}(\xi,\eta) = \psi$ and $\angle(\eta^*,\xi^*) = \theta$ as in Figure \ref{fig:curve}. When $\xi\ne \eta$ are uniformly close, $0< \psi\le \psi_0$, then the angle $\theta$ is acute. See Figure \ref{fig:curve}. Then,\footnote{We remind our reader that the symbol $\asymp$ is used mean that the ratio of both sides is bounded above and below by some positive constants.} $d_{\R^2}(\xi,\eta^*) \asymp \psi\beta$, $d_{\R^2}(\eta,\xi^*) \asymp \psi\alpha$, and $\alpha +\beta = \theta$.
Thus,
\[
 \frac{d_{\R^2}(\xi,\eta^*)d_{\R^2}(\eta,\xi^*)}{d_{\R^2}(\xi,\eta)^2(\angle(\eta^*,\xi^*))^2} 
 \asymp \frac{\psi^2\alpha\beta}{\psi^2(\alpha+\beta)^2} 
 = \frac{\alpha\beta}{(\alpha+\beta)^2} \le \frac{1}{4}.
\]
So,
\[
 \sqrt{d_{\R^2}(\xi,\eta^*)d_{\R^2}(\eta,\xi^*)}\le \const\cdot {d_{\R^2}(\xi,\eta)(\angle(\eta^*,\xi^*))}.
\]

Also, since $C$ (resp. $C^*$) is bounded in the affine chart, the euclidean distances (resp. the angular distance) above are equivalent to the projective distances in (\ref{eqn:hit_three_gromov}). Hence the above inequality justifies (\ref{eqn:hit_three_gromov}). 
\end{proof}

\begin{figure}
\centering
\begin{tikzpicture}
\node[anchor=south west,inner sep=0] (image) at (0,0,0) {\includegraphics[scale=.5]{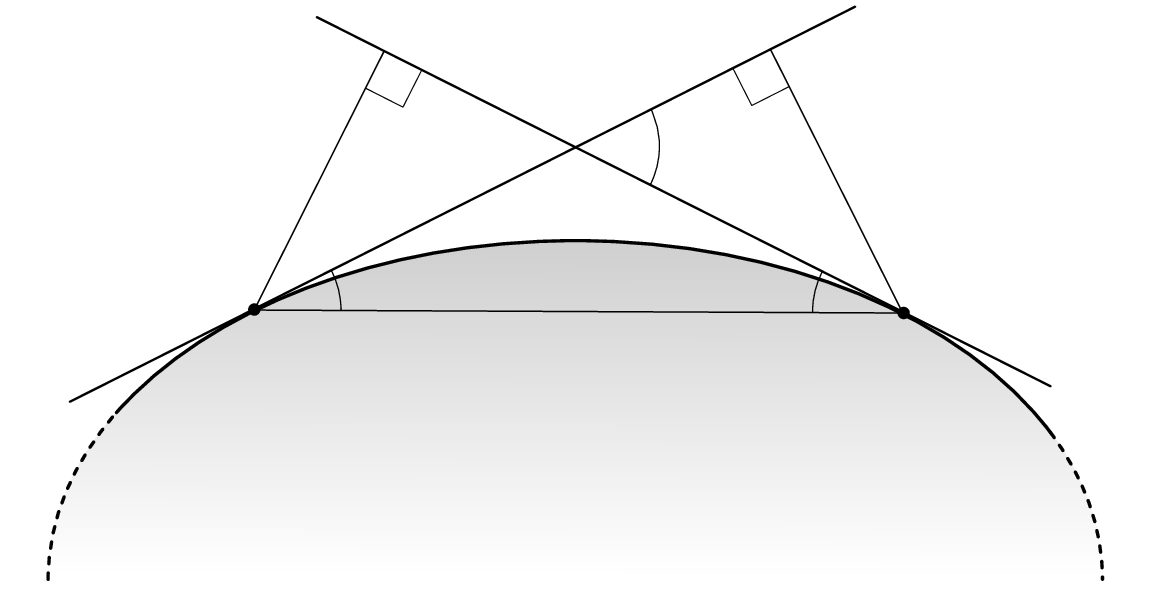}};
\begin{scope}[x={(image.south east)},y={(image.north west)}]
\node at (.22,.4) {$\xi$};
\node at (.55,.75) {$\scriptstyle\theta$};
\node at (.75,.94) {$\xi^*$};
\node at (.26,.92) {$\eta^*$};
\node at (.77,.41) {$\eta$};
\node at (.5,.43) {$\scriptstyle \psi$};
\node [rotate=65] at (.26,.7) {$\scriptstyle \psi\beta$};
\node [rotate=302] at (.75,.7) {$\scriptstyle \psi\alpha$};
\node at (.28,.498) {$\scriptstyle \alpha$};
\node at (.72,.488) {$\scriptstyle \beta$};
\node at (.98,.1) {$C$};
\end{scope}
\end{tikzpicture}
\caption{}
\label{fig:curve}
\end{figure}

Also, note that the premetric $D$ in left side of (\ref{eqn:hit_three_gromov}) is bilipschitz equivalent to the the premetric $\dg{{1/\sqrt{3}}}_x$ (see the formula in (\ref{eqn:ineq_proj_gromov_distance})), left side of the inequality in  (\ref{eqn:dim_flag_limit_set}).
But, the  square root of the right side of (\ref{eqn:hit_three_gromov}) is
\[
 \sqrt{L}\sqrt{d_{\R P^2}(\xi,\eta)d_{(\R P^2)^*}(\xi^*,\eta^*)} \le \sqrt{\frac{L}{2}} \sqrt{d_{\R P^2}(\xi,\eta)^2 + d_{(\R P^2)^*}(\xi^*,\eta^*)^2}.
\]
In the right side of the above inequality, we obtain a multiple of the Riemannian distance in $\Ft$.
Thus, the identity map
\[
 (\Lt, \rho) \ra (\Lt, \dg{{1/\sqrt{3}}}_x)
\]
is a $2$-H\"older map. Here $\rho$ denote the distance function of the Riemannian metric on $\Ft$. It is known that $\Lt \subset \Ft$ is a Lipschitz curve, and hence \[\hd^\mr{Riem}(\LT) = 1.\]

We obtain,
\[
 \d_{1,3}(\rho(\G)) = \hd^{\tb,1/\sqrt{3}}(\LT) \le \frac{\hd^\mr{Riem}(\LT)}{2} = \frac{1}{2}
\]
(cf. (\ref{eqn:PAlb})). We record this result in the next proposition.

\begin{prop}\label{prop:Hitchin-critical} 
 For any Hitchin representation $\rho: \G \ra \PSL{3}$,
 \[
 \d_{1,3}(\rho(\G))\le \frac{1}{2}.
\]
\end{prop}

Compare with the last paragraph of \cite[p. 892]{PS17}.

\appendix\normalsize
\section{Hausdorff measures on premetric spaces}
\label{sec:AppA}

Let $X$ be a metrizable topological space. Recall that an {\em outer measure} is a function $\mu : \mc{P}(X) \ra [0,\infty]$ that satisfies
\begin{enumerate}[(i)]
\setlength\itemsep{0em}
\item $\mu(\emptyset) = 0$,\label{ou:prop1}
\item for all $A, B\in\mc{P}(X)$ with $A\subset B$, $\mu(A) \le \mu(B)$, and \label{ou:prop2}
\item for all countable collection $\LRB{A_k \mid k\in\N}$ of subsets of $X$, \label{ou:prop3}
\[
\mu\LR{\bigcup_{k\in\N} A_k } \le \sum_{k\in\N} \mu(A_k).
\]
\end{enumerate}
A set $A\subset X$ is called {\em $\mu$-measurable} if for every $E\in\mc{P}(X)$, $\mu(A) = \mu(A \cap E) + \mu (A\cap E^c)$.
By Carath\'eodory's theorem (cf. \cite[Thm. 1.11]{MR1681462}), $\mu$-measurable sets form a $\sigma$-algebra to which $\mu$ restricts as a complete measure.

Assume now that $X$ is compact. The outer measure $\mu$ is called {\em good} if additionally,

\begin{enumerate}[(i)]
\setcounter{enumi}{3}
\item for all  $A,B \subset X$ with $\bar{A}\cap \bar{B} = \emptyset$, $\mu(A\cup B) = \mu(A) + \mu(B)$. \label{ou:prop4}
\end{enumerate}

The next lemma asserts that, for outer measures $\mu$ on compact metrizable spaces, 
the $\sigma$-algebra of Borel sets is a subalgebra of the $\sigma$-algebra of $\mu$-measurable sets.

\begin{lem}\label{lem:goodmeasure}
Let $X$ be a compact metrizable space. If $\mu$ is a good outer measure on $X$, then every Borel set $B\in \B(X)$ is measurable.
\end{lem}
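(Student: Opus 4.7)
\medskip

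\noindent\textbf{Proof plan.} Since $\mu$-measurable sets form a $\sigma$-algebra (Carath\'eodory) and since the Borel $\sigma$-algebra on $X$ is generated by the closed sets, it suffices to show that every closed set $F\subset X$ is $\mu$-measurable. By subadditivity (property (iii)) we always have $\mu(E)\le \mu(E\cap F)+\mu(E\cap F^c)$ for arbitrary $E\subset X$, so the work is to establish the reverse inequality.

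Fix a compatible metric $d$ on $X$ (using metrizability). For a closed set $F$ and an arbitrary $E\subset X$, I would introduce the ``thickened complement'' sets
\[
F_n \;=\; \{x\in E\cap F^c : d(x,F)\ge 1/n\},\qquad n\in\N,
\]
which form an increasing family with $\bigcup_n F_n = E\cap F^c$ (here I use that $F$ is closed, so $d(x,F)>0$ for each $x\in F^c$). Since $\overline{F_n}\subset \{d(\cdot,F)\ge 1/n\}$ and $\overline{E\cap F}\subset F$, these two sets have disjoint closures, so goodness (property (iv)) gives
\[
\mu(F_n)+\mu(E\cap F)\;=\;\mu\bigl(F_n\cup(E\cap F)\bigr)\;\le\;\mu(E),
\]
the inequality coming from monotonicity (property (ii)). The proof reduces to showing $\mu(F_n)\to\mu(E\cap F^c)$ as $n\to\infty$, which is the main obstacle: outer measures need not be countably additive on increasing unions, so one cannot simply pass to the limit.

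To overcome this, I would use the standard annular trick. Set
\[
R_n \;=\; F_{n+1}\setminus F_n \;=\; \{x\in E\cap F^c : 1/(n+1)\le d(x,F)<1/n\},
\]
and observe that $\overline{R_n}\cap \overline{R_m}=\emptyset$ whenever $|n-m|\ge 2$, because the closures land in the disjoint closed annuli $\{1/(n+1)\le d(\cdot,F)\le 1/n\}$ and $\{1/(m+1)\le d(\cdot,F)\le 1/m\}$. Iterating goodness over finitely many pairwise-disjoint-closure pieces yields
\[
\sum_{k=1}^{N}\mu(R_{2k})\;=\;\mu\Bigl(\bigcup_{k=1}^{N}R_{2k}\Bigr)\;\le\;\mu(E),\qquad \sum_{k=0}^{N}\mu(R_{2k+1})\;\le\;\mu(E),
\]
so $\sum_n \mu(R_n)\le 2\mu(E)$.

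If $\mu(E)=\infty$, the measurability inequality is automatic. Otherwise $\sum_n\mu(R_n)<\infty$, and from $E\cap F^c \subset F_n\cup\bigcup_{k\ge n}R_k$ together with subadditivity,
\[
\mu(E\cap F^c)\;\le\;\mu(F_n)+\sum_{k\ge n}\mu(R_k)\;\xrightarrow[n\to\infty]{}\;\lim_{n\to\infty}\mu(F_n)\;\le\;\mu(E\cap F^c),
\]
the last inequality by monotonicity. Hence $\mu(F_n)\nearrow\mu(E\cap F^c)$, and combining with the displayed inequality above gives $\mu(E\cap F)+\mu(E\cap F^c)\le\mu(E)$, as required. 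I expect the only subtle point to be the disjoint-closures bookkeeping for the annular sets $R_n$; everything else is routine manipulation of outer-measure properties (i)--(iv).
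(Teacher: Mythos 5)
Your proof is correct. The paper's argument is much shorter: it observes that, with respect to a fixed compatible metric $d$ on $X$, the goodness property (iv) implies the usual Carath\'eodory criterion for a metric outer measure (namely $d(A,B)>0\Rightarrow\mu(A\cup B)=\mu(A)+\mu(B)$, since positive distance forces disjoint closures), and then simply cites Folland [Prop.~11.16] for the conclusion that Borel sets are $\mu$-measurable. You take the same route conceptually but unfold the citation: instead of invoking the textbook theorem, you re-prove the metric-outer-measure result from scratch, applying goodness directly in its disjoint-closure form to the sets $F_n$ and to the annuli $R_n=F_{n+1}\setminus F_n$. Your bookkeeping is the standard Carath\'eodory argument and is carried out correctly --- the continuity of $d(\cdot,F)$ controls the closures of the annuli, the lag-$\geq 2$ disjointness justifies the even/odd splitting giving $\sum_n\mu(R_n)\le 2\mu(E)$, and the tail estimate delivers $\mu(F_n)\to\mu(E\cap F^c)$. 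The difference between the two proofs is purely one of self-containedness versus brevity; there is no substantive mathematical divergence.
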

\begin{proof}
Let $d$ be a metric on $X$. Then the condition (\ref{ou:prop4}) above implies that 
\begin{enumerate}[(i')]
\setcounter{enumi}{3}
\item for all $A,B \subset X$ with $d(A,B)>0$, $\mu(A\cup B) = \mu(A) + \mu(B)$.
\end{enumerate}
Therefore, $\mu$ is a metric outer measure on $(X,d)$.
By \cite[Prop. 11.16]{MR1681462},  Borel subsets of $X$ are measurable.
\end{proof}

\begin{defn}[Premetric space]
Let $X$ be a  topological  space.
A  symmetric continuous function $d: X\times X \ra [0,\infty]$
is called a {\em premetric} on $X$. 
A pair $(X,d)$ consisting of a metrizable topological space $X$ and a premetric $d$ on $X$ is called a {\em premetric} space.
\end{defn}

In what follows, we consider only {\em positive} premetrics, i.e.,
\[
d(x,y)>0 \ \iff \ x\ne y, \quad \forall x,y\in X
\] 

Let $(X,d)$ be a compact positive premetric space. Then $d$ satisfies the following {\em separation property}:
\be\label{eqn:sep}
d(A,B) > 0 \ \iff \ \bar{A} \cap \bar{B} = \emptyset,\quad \forall A,B \subset X.
\ee Let $\ve> 0$, $\beta >0$.
For every $A\subset X$, define
\[
\H^\beta_\ve(A) = \inf_{\mc{U}}\LRB{ \sum_{k\in\N} \diam_d(U_k)^\beta \ \bigg\vert \ 
\mc{U} = \LRB{U_k\mid k\in\N}\text{ covers }A,\ 
\mr{mesh}(\mc{U}) \le \ve }.
\]
In the above, $\mr{mesh}(\mc{U})$ is the supremum of the $d$-diameters of the members of $\mc{U}$.
Then 
$$
\H^\beta_\ve : \mc{P}(X) \ra [0,\infty]$$
 is an outer measure on $X$ (cf. \cite[Prop. 1.10]{MR1681462}).
Define the $\beta$-dimensional {\em Hausdorff measure} $\H^\beta$ by
\[
\H^\beta(A) = \lim_{\ve\ra 0} \H^\beta_\ve(A).
\]

\begin{thm}
The Hausdorff measure $\H^\beta$ is a good outer measure.
\end{thm}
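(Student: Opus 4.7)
The plan is to mimic the classical proof that the Hausdorff measure on a metric space is a metric outer measure, checking that the argument survives the loss of the triangle inequality. First I would verify that for each fixed $\ve > 0$ the set function $\H^\beta_\ve : \mc{P}(X) \to [0,\infty]$ satisfies properties (\ref{ou:prop1})--(\ref{ou:prop3}): the empty cover witnesses $\H^\beta_\ve(\emptyset)=0$; monotonicity is immediate since any $\ve$-mesh cover of $B$ is one of $A \subset B$; and for countable subadditivity, given $\eta>0$ and countable $\{A_k\}$, concatenating $\ve$-mesh covers $\mc{U}_k$ of $A_k$ with $\sum_{U \in \mc{U}_k} \diam_d(U)^\beta \le \H^\beta_\ve(A_k) + \eta 2^{-k}$ yields an $\ve$-mesh cover of $\bigcup_k A_k$ of total $\beta$-weight $\le \sum_k \H^\beta_\ve(A_k) + \eta$.

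Next, since $\ve \mapsto \H^\beta_\ve(A)$ is monotonically non-increasing, the limit $\H^\beta(A) = \lim_{\ve \to 0^+} \H^\beta_\ve(A) = \sup_{\ve>0}\H^\beta_\ve(A)$ exists in $[0,\infty]$. Properties (\ref{ou:prop1}) and (\ref{ou:prop2}) pass to the supremum trivially. For (\ref{ou:prop3}), fix a countable family $\{A_k\}$; for each $\ve>0$ we have $\H^\beta_\ve(\bigcup_k A_k) \le \sum_k \H^\beta_\ve(A_k) \le \sum_k \H^\beta(A_k)$, and letting $\ve \to 0^+$ on the left yields countable subadditivity of $\H^\beta$.

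The heart of the argument is property (\ref{ou:prop4}). Given $A,B \subset X$ with $\bar A \cap \bar B = \emptyset$, the separation property (\ref{eqn:sep}) gives $\delta := d(A,B) > 0$; this is the only step where we genuinely use the compactness of $X$, the continuity of $d$, and positivity, and it replaces the role that the triangle inequality plays in the metric case. Fix any $0 < \ve < \delta$, and let $\mc{U}$ be any $\ve$-mesh cover of $A \cup B$. Since every $U \in \mc{U}$ satisfies $\diam_d(U) \le \ve < \delta$, no single $U$ can contain points of both $A$ and $B$; hence $\mc{U}$ splits into disjoint subcollections $\mc{U}_A$ and $\mc{U}_B$ (assigning any $U$ disjoint from both to either) that cover $A$ and $B$ respectively. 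Therefore
\[
\H^\beta_\ve(A) + \H^\beta_\ve(B) \le \sum_{U \in \mc{U}_A}\diam_d(U)^\beta + \sum_{U \in \mc{U}_B}\diam_d(U)^\beta \le \sum_{U \in \mc{U}} \diam_d(U)^\beta.
\]
Taking infimum over $\mc{U}$ and combining with subadditivity gives $\H^\beta_\ve(A\cup B) = \H^\beta_\ve(A) + \H^\beta_\ve(B)$ for every $\ve < \delta$, and letting $\ve \to 0^+$ proves (\ref{ou:prop4}). The main obstacle to watch for is that diameters and infima are computed with respect to a premetric rather than a metric, but since $\diam_d(U)$ and $d(A,B)$ are defined by the same suprema and infima as in the metric case, and no triangle inequality is invoked beyond the separation step, the classical scheme goes through verbatim.
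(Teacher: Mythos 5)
Your proof is correct and follows essentially the same approach as the paper's: both verify properties (\ref{ou:prop1})--(\ref{ou:prop3}) by passing properties of $\H^\beta_\ve$ to the limit, and both prove (\ref{ou:prop4}) by using the separation property (\ref{eqn:sep}) to get $d(A,B)>0$, fixing $\ve < d(A,B)$, splitting any $\ve$-mesh cover of $A\cup B$ into disjoint subfamilies covering $A$ and $B$ respectively, and combining the resulting superadditivity with subadditivity. Your write-up is somewhat more explicit (e.g., spelling out why no element of the cover can meet both $A$ and $B$, which the paper leaves as "clearly"), but the argument is the same.
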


\begin{proof}
We need to check the properties (\ref{ou:prop1})-(\ref{ou:prop4}) above. 
Since, for all $\ve>0$, $\H_\ve^\beta$ is an outer measure, taking limit $\ve \ra 0$, properties (\ref{ou:prop1})-(\ref{ou:prop3}) are easily verified. 
Therefore, we only need to check that $\H^\beta$ satisfies property (\ref{ou:prop4}).

Let $A, B\subset X$ such that $\bar{A} \cap\bar{B} =\emptyset$. By (\ref{eqn:sep}), $d(A,B) = d_0 > 0$.
Let $\ve < {d_0}$ be a positive number and $\mc{U}$ be a countable open cover 
of $A\cup B$ with $\mr{mesh}(\mc{U})\le \ve$. 
If such open cover does not exist, then $\H^\beta_\ve(A\cup B)$ (and hence, $\H^\beta(A\cup B)$) is infinity.
Otherwise, $\mc{U}$ can be written as a disjoint union $\mc{U}_A \sqcup \mc{U}_B$ where $\mc{U}_A$ consists of all open sets in $\mc{U}$ that intersect $A$ and $\mc{U}_B$ consists of the rest.
Clearly, $\mc{U}_A$ and $\mc{U}_B$ are open covers of $A$ and $B$, respectively.
Therefore,
\[
\sum_{E\in\mc{U}} \diam_d(E)^\beta 
= \sum_{E\in\mc{U}_A} \diam_d(E)^\beta + \sum_{E\in\mc{U}_B} \diam_d(E)^\beta
\ge \H^\beta_\ve(A) + \H^\beta_\ve(B).
\]
Since the above holds for any cover $\mc{U}$ with mesh $\le \ve$, we have
\[
\H^\beta_\ve(A\cup B) \ge \H^\beta_\ve(A) + \H^\beta_\ve(B).
\]
Taking limit $\ve\ra 0$, we get $\H^\beta(A\cup B) \ge \H^\beta(A) + \H^\beta(B)$.
The reverse inequality follows from property (\ref{ou:prop3}). Therefore, $\H^\beta(A\cup B) = \H^\beta(A) + \H^\beta(B)$.
This completes the proof.
\end{proof}

By Lemma \ref{lem:goodmeasure} and the above theorem, we obtain the following result.

\begin{cor}
Every Borel subset of $X$ is $\H^\beta$-measurable.
\end{cor}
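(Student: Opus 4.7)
The plan is to invoke Lemma \ref{lem:goodmeasure} directly, applied to the Hausdorff outer measure $\H^\beta$ whose properties were established in the preceding theorem. Since the theorem verifies that $\H^\beta$ satisfies all four axioms of a good outer measure on the compact metrizable space $X$, the corollary will be an immediate consequence of the lemma.

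In more detail, I would first recall that the ambient space $X$ is compact and metrizable by the standing hypothesis on premetric spaces in this section, so the framework of Lemma \ref{lem:goodmeasure} applies. Next, I would cite the preceding theorem to conclude that $\H^\beta$ is a good outer measure; in particular, condition (\ref{ou:prop4}) holds: whenever $\bar A \cap \bar B = \emptyset$, one has $\H^\beta(A \cup B) = \H^\beta(A) + \H^\beta(B)$. Finally, by Lemma \ref{lem:goodmeasure}, every Borel subset of $X$ is $\H^\beta$-measurable, completing the proof.

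There is essentially no obstacle here, since the nontrivial work has already been carried out: positivity of the premetric gave the separation property (\ref{eqn:sep}), which was used inside the theorem to upgrade (\ref{ou:prop4}) to the metric-outer-measure condition $d(A,B) > 0 \Rightarrow \H^\beta(A \cup B) = \H^\beta(A) + \H^\beta(B)$, and this in turn is precisely what drives the proof of Lemma \ref{lem:goodmeasure} via the standard Carath\'eodory criterion for metric outer measures (cf.\ \cite[Prop.\ 11.16]{MR1681462}). The corollary is therefore a one-line consequence of the two preceding results.
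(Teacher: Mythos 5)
Your proposal is correct and matches the paper's own one-line argument exactly: the paper states the corollary follows "By Lemma \ref{lem:goodmeasure} and the above theorem," which is precisely the combination you invoke. Nothing more is needed.
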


The {\em Hausdorff dimension} of a Borel subset $B\subset (X,d)$ is then defined as
\[
\hd(B) = \inf\{\beta\mid \H^{\beta}(B) =0\} =\sup\{\beta\mid\H ^{\beta}(B) =\infty\}. 
\]

\section{A brief overview of the history of the Patterson--Sullivan theory in higher rank}\label{appen:PShigherrank}
For reader's convenience, in this appendix we discuss connection of our work  with some notable earlier papers on the Patterson--Sullivan theory for higher rank symmetric spaces (of noncompact type). 

To the best of our knowledge, Bishop--Steger \cite{MR1208564} and Burger \cite{MR1230298} were the first to investigate  Poincar\'e series associated with a discrete isometry group $\Gamma$ of a symmetric space of rank $\ge 2$.
Precisely, for a pair of Fuchsian subgroups $\Gamma_i< \isom(\mathbb{H}^2)$, $i=1,2$, and an isomorphism $\phi: \G_1\ra\G_2$, Burger took the subgroup $\Gamma = \LRB{(\g_1,\phi(\g_1)) \mid \g_1\in\G_1 }< \isom(\mathbb{H}^2 \times\mathbb{H}^2)$ and considered the {\em Finsler} Poincar\'e series
\[
\sum_{(\g_1,\g_2)\in\G} \exp\LR{ -s\LR{a d_1(\gamma_1 x_1, x_1) + b d_2(\g_2x_2,x_2) } }, \quad (a,b)\in\mathbb{R}_+^2,
\]
where $d_1$ (resp. $d_2$) and $x_1$ (resp. $x_2$) denote the distance function and a fixed point, respectively, in the first (resp. second) factor of $\mathbb{H}^2 \times\mathbb{H}^2$.
Then he studied the set of all points $(a,b) \in \mathbb{R}_+^2$ for which the Poincar\'e series has the critical exponent $s=1$ which he called the {``Manhattan curve.''}
He proved that each point $(a,b)$ in this set gives rise to a unique ``$(a,b)$-dimensional'' density with respect to which the action $\Gamma\acts \vb\mathbb{H}^2 \times\vb\mathbb{H}^2$ is ergodic, \cite[Thm. 4]{MR1230298}.
The Theorem \ref{mainA} of our paper illustrates this result, compare  Subsection \ref{sec:product},
although in that example we only consider the case $a=b$, 
the general case can also be obtained by suitably changing the weights in the formula (\ref{eqn:sec:product}) for the Finsler metric.
On the other hand, Bishop and Steger considered the closely related Poincar\'e series
\[
\sum_{(\g_1,\g_2)\in\G} \exp\LR{ -s d_1(\gamma_1 x_1, x_1) - (1-s) d_2(\g_2x_2,x_2) }, \quad s\in(0,1),
\]
and showed that it diverges if and only if $\phi$ is induced by an isometry of $\mathbb{H}^2$.
We discussed the connection of this result with our work in Subsection \ref{sec:product} for $s = 1/2$; for a general $s$, one needs to modify the Finsler metric as above.

Patterson--Sullivan measures for general symmetric spaces $X$ were introduced and studied by Albuquerque \cite{MR1675889}.
His main result is that for a generic\footnote{Here $\G$ is called generic if the support of any Patterson--Sullivan density lies in the regular part of the visual boundary.} Zariski-dense discrete subgroup $\Gamma$ of $G = \isom_0(X)$, the support of any Patterson--Sullivan density of dimension $\dR(\G)$ in the visual boundary of $X$ lies in a single regular $G$-orbit $G\cdot\xi_0$ where $\xi_0$ is the direction in the Weyl chamber along which the growth-rate of $\Gamma$-orbits in $X$ is ``maximal.''
More precisely, any Patterson--Sullivan density of dimension $\dR(\G)$ is supported on $G\cdot\xi_0 \cap \Lambda(\Gamma)$.
He also proved that the critical exponent $\dF(\Gamma)$ defined in terms of the Finsler pseudometric induced by the linear functional in the Weyl chamber dual to a vector $\xi\in \smod$, is minimal precisely when $\xi = \xi_0$.
Albuquerque further showed that this minimal Finsler critical exponent equals to the Riemannian critical exponent.

Quint \cite{MR1933790, MR1935549} generalized Albuquerque's results to arbitrary Zariski-dense discrete subgroups $\G < G$ and to general partial flag-manifolds $G/P_{\tmod}$. 
For such a group $\G$, he defined the {\em indicator of growth} function
\[
\psi_\G : \Delta \ra \R\cup \{-\infty\},
\]
which can be regarded as a higher rank analogue of the critical exponent,  
and showed that it is strictly positive in the interior of the Benoist's limit cone of $\G$. Note, however, that the orbital counting (done in the definition of the function $\psi_\G$) 
in Quint's paper is different from ours since he takes the infimum of exponents of convergence $\tau_{{\mathcal C}}$ over certain open cones ${\mathcal C}\subset \Delta$.  
For a positive linear function $\phi$ on $\Delta$ satisfying 
$\phi \ge \psi_\G$ and $\phi(x) =  \psi_\G(x)$ for some $x\in \mathrm{int}(\Delta)$, Quint (using the Patterson--Sullivan construction) defined a 
``$(\G,\phi)$-Patterson measure'' supported on the flag-limit set of $\G$ in $G/P_\theta$. Observe that each $\phi$ defines a $G$-invariant polyhedral Finsler metric on 
$X$ by the composition $\phi\circ d_\Delta$.  This part of Quint's work has nontrivial overlap with our's (specifically, the 
construction of Patterson--Sullivan densities 
in section \ref{sec:CD} of our paper). However, neither 
work subsumes the other since we do not assume Zariski density but require subgroups to be $\tmod$-RA, while Quint does not assume the $\tmod$-RA property  
but requires Zariski density. Quint's work contains wealth of other results such as a proof the concavity property of $\psi_\G$ and its applications: These issues are not discussed at all 
in our paper.

Taking inspiration from \cite{MR1230298} and building on \cite{MR1675889}, Link \cite{Lin04} considered a  class of densities (coming from the Patterson--Sullivan construction) on the visual boundary of $X$ associated with a Zariski-dense discrete subgroup $\Gamma < \isom_0(X)$ that generalize the conformal densities. 
After introducing a notion of Hausdorff measures appropriate for her setup, Link showed that for a regular point $\xi\in\vb X$, the Hausdorff dimension of $G\cdot\xi \cap \Lambda(\G)$ is bounded above by a suitable exponent of growth of $\G$-orbits in $X$; she also proved the equality 
for a certain class of groups which she calls ``radially cocompact'' (see \cite[Sec. 6]{Lin04}). 
In her subsequent work \cite{Lin06}, Link  proved that, the action of $\G$ on the ``ray-limit set'' is ergodic\footnote{More precisely, she proves that if $A$ is a measurable $\G$-invariant subset of the ray-limit set, then either $A$ is a null set or the complement of $A$ in the full limit set is null. However, this result does not exclude the possibility that the ray-limit set itself is a null set for the generalized Patterson--Sullivan measures.} with respect to these generalized Patterson--Sullivan measures\footnote{In fact, Albuquerque \cite{MR1675889} also attempted to prove ergodicity, but there was a gap in the proof. This gap was discovered and fixed by Link \cite[p. 612]{Lin06}.}.
These results are parallel to the equality of the Finsler critical exponent and the Hausdorff dimension of flag-limit sets in $G/P_{\tmod}$ 
for $\tmod$-Anosov subgroups (Theorem \ref{thm:hd}) and the ergodicity theorem (Theorem \ref{thm:ergoAno}) proved in our paper. 
However, the conicality condition for limit points of Anosov subgroups is a vast relaxation of Link's notion of radial limit points. Moreover, it follows from the main theorem of \cite{MR1437472} and Lemma \ref{lem:unifcloseAno} that, say, a Zariski dense $\tmod$-Anosov subgroup can never be radially cocompact unless $G$ has rank one. 

To conclude the discussion, we would like to repeat that Albuquerque, Quint and Link treat general Zariski-dense discrete subgroups, while 
we study Anosov and, more generally, regular antipodal subgroups, but do not assume Zariski density. 
While many of our proofs work for general $\tmod$-RA subgroups, the  Anosov property is critical in several places, for instance, in the proof of vanishing of 
$\dF^\con$ (Proposition  \ref{prop:CEconAno}), the proof of ergodicity, the proof that (in a suitable range) the Gromov pre-metric on the limit set is a metric, etc.
We would also like to point out that, even in the Zariski-dense case, parts (ii), (iv) and (v) 
(ergodicity, divergence and relation of Finsler critical exponent and Hausdorff dimension of the limit set)  
in our Theorem \ref{mainA} is new since, to our knowledge, the most general case in this direction was considered by Link \cite{Lin06} 
but was conditioned on the density of the ray-limit set. 

\section{A discussion of the main results without the $\iota$-invariance condition on the type $\tb$}\label{appen:without_iota_invariance}

In this last appendix, on our referee's suggestion, we show that Theorem \ref{mainA} (except for item (\ref{mainthm:part4})), and Theorem \ref{mainB} hold true without the $\iota$-invariance condition that we imposed on the type $\tb \in \mr{int}(\tmod)$.

Suppose that $\tb \in \smod$ is a type which is not assumed to be $\iota$-invariant.
Then the $G$-invariant pseudo-metric $d_\tb : X \times X \ra [0,\infty)$ defined in (\ref{def:df}) is in general {\em asymmetric}.
However, the following still holds: Since $d_\Delta(y,x) = \iota d_\Delta(x,y)$, we have,
\begin{equation}\label{eqn:assym_finsler_dist}
  d_\tb(y,x) = d_{\iota \tb} (x,y), \quad \forall x,y\in X.  
\end{equation}
The inequality (\ref{eqn:dist_ineq}) still holds.
The function $d_\tb$ satisfies the triangle inequality for the asymmetric distance functions:
\[
 d_\tb(x,z) \le d_\tb(x,y) + d_\tb(y,z), \quad \forall x,y,z\in X.
\]
See \cite[Subsec. 5.1.2]{MR3811766} for more details.

Let $\tb \in {\rm int}\,\tmod$.
In the asymmetric case, we define the $\tb$-critical exponent $\dF$ of a discrete group $\G < G$  in the same way as it was done for the symmetric case in Section \ref{sec:CE}, see (\ref{def:CE}).
As a consequence of the above triangle inequality, the critical exponent does not depend on the choice of a base-point in $X$ (see the paragraph after Remark \ref{rem:remtwopointfour}).
The definition of $\tb$-convergence/divergence type is also same as in the symmetric case (see Definition \ref{def:convergencetype}).
The Proposition \ref{prop:finiteCE} for uniformly $\tmod$-regular groups $\G$, which only depends on the fact that $\dr$ and $\df$ are coarsely equivalent on an $\G$-orbit in $X$, is valid in this case.

All the definitions (in particular, the crucial definition of $\tb$-conformal densities) and results in Sections \ref{sec:CD} remain valid in the asymmetric case.

We leave the results in Sections \ref{sec:hypmorse} and \ref{sec:dg} as they are; those results are mostly independent of Sections \ref{sec:SL}, \ref{sec:dim}, and \ref{sec:ergodic}.
We remark in passing that an assymetric Gromov product on $\Ft$ can be defined as follows:
Define the {Gromov product} with respect to a base point $x\in X$ by
\be\label{eqn:GP_asym}
\norm{\t_+|\t_- }^\tb_x = \frac{1}{2}\LR{\dhor_{\t_+}(x,z) + \idhor_{\t_-}(x,z)},
\quad
\tau_\pm\in{\rm Flag}(\tmod) \text{ are antipodal},
\ee
where $z$ is some point on the parallel set $P(\t_+,\t_-)$ spanned by $\t_\pm$.
Following the proof of Lemma \ref{lem:GPinvariant}, it can be checked that the definition in (\ref{eqn:GP_asym}) does not depend on the choice of $z\in P(\t_+,\t_-)$.
The above leads to an assymetric Gromov premetric $\dg{\e}$ in $\Ft$ (cf. Definition \ref{def:dg}) which satisfies
\[
 \dg{\e}_x(\t_1,\t_2) = D^{\iota\tb,\e}_x(\t_2,\t_1),\quad
 \forall \tau_1,\tau_2\in\Ft.
\]

The results in Sections \ref{sec:SL} and \ref{sec:dim} are valid verbatim in the assymetric case.
In the proof of Theorem \ref{thm:ergoAno} in Section \ref{sec:ergodic}, we only need to modify the justification of Claim \ref{clm:three} in the proof of the crucial Sublemma \ref{sublemma} in the asymmetric case.
The statement in that claim is: {\em If $S(x: B(\g x_0,r))\cap S(x: B(\phi x_0,r)) \ne\emptyset$, for $\g,\phi\in\G^*_{d}$, then $\df(\g x_0, \phi x_0)$ is uniformly bounded.}
The proof of this claim uses the main result of Section \ref{sec:hypmorse} and it was given for symmetric functions $d_\tb$ (note that in that proof, the requirement that $\tb$ has unit length is unimportant, only the hyperbolicity of $\G$-orbits in $X$ is relevant).
When $d_\tb$ is asymmetric, we consider the {\em symmetrization}
function $d_\tb^{\rm sym} = (d_\tb + d_{\iota\tb})/2$,
\[
 d_\tb^{\rm sym}(x,y) = \left\langle d_\Delta(x,y) | \frac{\tb+\iota\tb}{2}\right\rangle, \quad x,y\in X,
\]
 and the previous proof implies that, under the hypothesis, $d^{\rm sym}_\tb(\g x_0,\phi x_0)$ is uniformly bounded.
Since $d_\tb \le 2d^{\rm sym}_\tb$, $d_\tb(\g x_0,\phi x_0)$ is also uniformly bounded under the same hypothesis.

\bibliographystyle{plain}

\vfill 

\noindent
S.D.

Department of Mathematics,
   Yale University, 
   10 Hillhouse Ave, New Haven, CT 06511

\texttt{subhadip.dey@yale.edu}

\medskip
\noindent
M.K.

{Department of Mathematics,
   University of California, Davis,
   One Shields Ave, Davis, CA 95616}

\texttt{kapovich@math.ucdavis.edu}

\end{document}